\newtheorem{theorem}{Theorem}[section]
\newtheorem{lemma}[theorem]{Lemma}
\newtheorem{proposition}[theorem]{Proposition}
\newtheorem*{theorem*}{Theorem}
\newtheorem*{theo}{Theorem}
\theoremstyle{definition}
\newtheorem{definition}[theorem]{Definition}
\theoremstyle{remark}
\newtheorem{remark}[theorem]{Remark}
\numberwithin{equation}{section}
\newcommand {\Di}  {{\mathcal D}}
\newcommand {\T}  {{\mathcal T}}
\author{Shigeki Akiyama}
\address[Shigeki Akiyama]{Institute of Mathematics, University of Tsukuba, 1-1-1 Tennodai\\
Tsukuba, Ibaraki, Japan (zip:350-8571)}
\email{akiyama@math.tsukuba.ac.jp}
\author{Beno\^it Loridant}
\author{J\"org Thuswaldner}
\address[Beno\^it Loridant, J\"org Thuswaldner]{Montanuniversit\"at Leoben,
    Franz Josefstrasse 18, Leoben 8700, Austria} 
		\email{benoit.loridant@unileoben.ac.at, joerg.thuswaldner@unileoben.ac.at}
\thanks{The authors were supported by the project P27050 of the Austrian Science Fund (FWF), the project I1136 of the French National Agency for Research (ANR) and the FWF, and by the project I3346 of the Japan Society for the Promotion of Science (JSPS) and the FWF}
\title{Topology of planar self-affine tiles with collinear digit set}
\date{\today}
\dedicatory{\large Dedicated to Peter Kirschenhofer on the occasion of his 60th birthday.}
\keywords{Canonical number systems, self-affine tiles, cut points} \subjclass[2010]{28A80,52C20,54D05} 
\begin{document}
\begin{abstract}
We consider the self-affine tiles with collinear digit set defined as follows. 
Let $A,B\in\mathbb{Z}$ satisfy $|A|\leq B\geq 2$ and  $M\in\mathbb{Z}^{2\times2}$ be an integral matrix with characteristic polynomial $x^2+Ax+B$. Moreover, let $\Di=\{0,v,2v,\ldots,(B-1)v\}$ for some $v\in\mathbb{Z}^2$ such that $v,M v$ are linearly independent. We are interested in the topological properties of the self-affine tile $\T$ defined by $M\T=\bigcup_{d\in\mathcal{D}}(\T+d)$. Lau and Leung proved that $\T$ is homeomorphic to a closed disk if and only if $2|A|\leq B+2$. In particular, $\T$ has no cut point. We prove here that $\T$ has a cut point if and only if $2|A|\geq B+5$. For $2|A|-B\in \{3,4\}$, the interior of $\T$ is disconnected and the closure of each connected component of the interior of $\T$ is homeomorphic to a closed disk.
\end{abstract}
\maketitle

\begin{section}{Introduction and statement of the theorems}

Let $M$ be a $d\times d$ integral \emph{expanding} matrix, \emph{i.e.}, with eigenvalues greater than $1$ in modulus, and $\Di\subset\mathbb{Z}^d$ a finite set. 
Then there is a unique nonempty compact set $\T=\T(M,\Di)$ satisfying
\begin{equation}\label{SATile}M \T=\bigcup_{a\in \Di}(\T+a)
\end{equation}
(see \cite{Hutchinson81}). 
Suppose that $\Di\subset \mathbb{Z}^d$ is a complete residue system of $\mathbb{Z}^d/M\mathbb{Z}^d$. Then $\T$ has positive Lebesgue measure (see~\cite{LagariasWang96a}) and is called \emph{integral self-affine tile with digit set} $\Di$. 
Moreover, there is a subset $\mathcal{J}$ of $\mathbb{Z}^d$ 
such that $\T+\mathcal{J}$ is a \emph{tiling of} $\mathbb{R}^d$~:
$$\bigcup_{s\in\mathcal{J}}(\T+s)=\mathbb{R}^d \;\;\textrm{and }\;\lambda_d((\T+s)\cap(\T+s'))=0\textrm{ if } s\ne s'\in\mathcal{J},
$$
where $\lambda_d$ is the $d$-dimensional Lebesgue measure (see~\cite{LagariasWang97}). If $\mathcal{J}=\mathbb{Z}^d$, $\T$ is called a \emph{self-affine $\mathbb{Z}^d$-tile}.

Self-affine tiles are thoroughly studied in the literature~\cite{Kenyon92,GroechenigHaas94,LagariasWang96a,Solomyak97}. Their topological properties have numerous connections with arithmetical properties of numeration systems~\cite{Thurston89,Katai95}, and more generally with dynamical properties of discrete dynamical systems~\cite{Rauzy82,Praggastis92}. Several topological properties of self-affine tiles have already been investigated: connectedness~\cite{KiratLau00},  homeomorphy to the closed disk~\cite{LuoRaoTan02,BandtWang01}, interior components~\cite{NgaiNguyen03,NgaiTang05} and fundamental group~\cite{LuoThuswaldner06}. 

This paper is devoted to \emph{cut points} of a class of self-affine tiles. A cut point of a connected set $T$ is a point $x\in T$ such that $T\setminus \{x\}$ is no longer connected. The study of cut points and more generally of \emph{cut sets} (when $\{x\}$ is replaced by a set $X\subset T$) is of great importance for the understanding of fractal sets with a wild topology. They were used to give a combinatorial description of the fundamental group of one-dimensional spaces~\cite{DorferThuswaldnerWinkler13} (see also~\cite{AkiyamaDorferThuswaldnerWinkler09} for the case of the Sierpinski gasket). On the opposite, the lack of cut points has an impact on the boundary of the complement of locally connected plane continua~\cite{Whyburn79}. This was exploited in~\cite{NgaiTang05} to show the homeomorphy to the closed disk  of interior components of some self-affine tiles. Algorithms for finding cut sets of self-affine tiles can be found in~\cite{LoridantLuoSellamiThuswaldner16}.

We are interested in a class of self-affine tiles with collinear digit set defined as follows. Let $M\in\mathbb{Z}^{2\times 2}$ with $\det(M)=B\geq 2$ and suppose that $M$ has characteristic polynomial $x^2+Ax+B$. Then $M$ is expanding iff $|A|\leq B$. Let $\Di=\{0,v,2v,\ldots,(B-1)v\}$ for some $v\in\mathbb{Z}^2$ such that $v,M v$ are linearly independent. Leung and Lau proved in~\cite{LauLeung07} that the associated self-affine tile $\T(M,\Di)$ is homeomorphic to the closed disk if and only if $2|A|\leq B+2$. In this paper, we are able to give topological properties also for the reverse inequality and establish the following theorem which treats topological properties for all planar self-affine tiles with collinear digit set and expanding matrix $M\in\mathbb{Z}^{2\times 2}$ with positive determinant.
\begin{theo}\label{th:cpnocp} Let $M\in\mathbb{Z}^{2\times 2}$ with  characteristic polynomial $x^2+Ax+B$, where $|A|\leq B\geq 2$.  Let $\Di=\{0,v,2v,\ldots,(B-1)v\}$ for some $v\in\mathbb{Z}^2$ such that $v,M v$ are linearly independent. Denote by $\T=\T(M,\Di)$ the associated self-affine tile. Then the following holds.
\item[$(i)$] For $2|A|-B\leq 2$, $\T$ is homeomorphic to the closed disk.
\item[$(ii)$] For $2|A|-B\in\{3,4\}$, $\T$ has no cut point but its interior is disconnected. The closure of each connected component of the interior of $\T$ is homeomorphic to the closed disk.
\item[$(iii)$] For $2|A|-B \ge 5$, $\T$ has a cut point.
\end{theo}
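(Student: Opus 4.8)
Part $(i)$ is precisely the theorem of Leung and Lau \cite{LauLeung07} quoted above, so only $(ii)$ and $(iii)$ require an argument, and I treat these two cases together. After an integral change of basis one may assume $v=e_1$ and $M=\bigl(\begin{smallmatrix}0&-B\\1&-A\end{smallmatrix}\bigr)$, so that $\Di=\{0,1,\dots,B-1\}e_1$, the family $\T+\Z^2$ tiles $\RR^2$, and --- replacing $A$ by $-A$ if necessary, the case $A\le 0$ being entirely analogous --- $A\ge 0$, i.e. $n:=2A-B\ge 3$. Set $\T_k:=M^{-1}(\T+ke_1)$ for $k=0,\dots,B-1$, so that $\T=\bigcup_k\T_k$ is a chain of $B$ affine copies of $\T$ with consecutive copies overlapping; in particular $\T$ is connected, so that ``cut point'' is meaningful. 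The combinatorial backbone is the neighbour set $S=\{s\in\Z^2\setminus\{0\}:\T\cap(\T+s)\ne\emptyset\}$ together with the directed graph $G$ on $S$ whose edges $s\to s'$ encode the relations $d+s'=d'+Ms$ ($d,d'\in\Di$) coming from the subdivision $M\bigl(\T\cap(\T+s)\bigr)=\bigcup\bigl(\T\cap(\T+s')\bigr)+(\text{digit})$; each piece $\T\cap(\T+s)$ of the boundary $\partial\T=\bigcup_{s\in S}\T\cap(\T+s)$ is then the attractor of the graph-directed system read off from $G$.

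The heart of the proof is to determine $S$ and $G$ explicitly in terms of $(A,B)$, by running the standard neighbour-finding procedure while tracking the dependence on $n$. I expect the answer to be a short, uniform list: the ``horizontal'' neighbours $\pm e_1,\pm 2e_1,\dots$ coming from the overlaps $\T_k\cap\T_{k+j}$, together with the ``diagonal'' neighbours forced by $M$, with precise thresholds --- located exactly at $n=2$ and $n=4$ --- at which a new neighbour appears or a contact piece collapses to a point. This recovers the Leung--Lau picture for $n\le 2$ and, for $n\ge 3$, isolates a ``critical'' neighbour $s^\star$ (equivalently, the smallest $j$ for which $\T_k\cap\T_{k+j}\ne\emptyset$) whose contact piece is either an arc glued ``from one side'' ($n\in\{3,4\}$) or a single point ($n\ge 5$), and which is responsible for the degeneration of the topology.

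For $(ii)$, with $n\in\{3,4\}$, I would use $G$ to write $\T$ as a finite union $\T=\bigcup_j\mathcal K_j$ of sub-pieces, each a union of consecutive subtiles $\T_k$, in such a way that consecutive $\mathcal K_j$ meet only along a single arc traversed ``from the same side'', that $\overline{\interior\T}=\T$, and that the $\interior\mathcal K_j$ are exactly the connected components of $\interior\T$. This immediately gives that $\interior\T$ is disconnected, and since removing a point of a shared arc leaves the two pieces joined along the rest of the arc, no such point is a cut point; combined with $\RR^2\setminus\mathcal K_j$ being connected this shows $\T$ has no cut point. That each $\overline{\interior\mathcal K_j}=\mathcal K_j$ is a closed disk I would obtain by checking that $\mathcal K_j$ is a locally connected, non-separating plane continuum without cut point and applying the Whyburn-type criterion already used in \cite{NgaiTang05} --- or, directly, by showing via the part of $G$ governing $\partial\mathcal K_j$ that $\partial\mathcal K_j$ is a simple closed curve, reusing the Leung--Lau disk argument on each piece.

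For $(iii)$, with $n\ge 5$, the arc separating two of these pieces has shrunk to a single point $x$, which I would pin down as the common value of two eventually periodic $M^{-1}$-expansions, i.e. as the unique point of the critical contact piece, lying simultaneously on $\T\cap(\T+s)$ for a family of neighbours $s$ that completely surrounds it. Writing accordingly $\T=\mathcal K_1\cup\mathcal K_2$ with $\mathcal K_1\cap\mathcal K_2=\{x\}$, both $\mathcal K_i$ are non-degenerate continua (each is a union of full subtiles $\T_k$), so $\mathcal K_1\setminus\{x\}$ and $\mathcal K_2\setminus\{x\}$ are non-empty, relatively open in $\T\setminus\{x\}$ and disjoint, and $x$ is a cut point; iterating the substitution moreover produces infinitely many cut points, namely the $M^{-1}$-preimages of $x$ inside $\T$, though this is not needed. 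The only genuinely hard step in this whole scheme is the explicit, uniform computation of $S$, of $G$, and of the shapes of all contact pieces for every $(A,B)$ with $2A-B\ge 3$; everything afterwards --- reading off the connectivity of $\interior\T$, locating $x$, the disk criterion --- is then formal. A secondary subtlety, peculiar to $(iii)$, is to make sure the separation at $x$ is global rather than merely local: one must exclude that $\mathcal K_1$ and $\mathcal K_2$ also meet at some distant boundary point, which is again controlled by $G$ but requires care because several neighbours touch $\T$ near $x$ at the same time.
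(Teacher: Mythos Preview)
Your outline for $(iii)$ matches the paper's strategy: write $\T=D_1\cup D_2$ with $D_1\cap D_2$ a single point $z$, which is then a cut point. The paper makes this explicit by defining $D_1,D_2$ through a lexicographic cutoff on digit strings (at the digit $A-3$, with alternation at deeper levels) and proves $D_1\cap D_2=\{z\}$ by an induction using exactly the neighbor set you describe; your ``secondary subtlety'' is precisely the content of that induction.

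For $(ii)$, however, your plan has a real gap. You propose a \emph{finite} decomposition $\T=\bigcup_j\mathcal K_j$ with each $\mathcal K_j$ a union of consecutive first-level subtiles $\T_k$, and assert that the $\interior\mathcal K_j$ are the connected components of $\interior\T$. This cannot be correct: for $2A-B\in\{3,4\}$ the interior of $\T$ is disconnected, and by self-similarity it then has infinitely many components, so they are not captured by any finite grouping of the $\T_k$. Even setting this aside, your cut-point argument treats only points on the shared arcs; for a point $p$ inside some $\mathcal K_j$ you would need $\mathcal K_j\setminus\{p\}$ connected, which presupposes the disk-likeness of $\mathcal K_j$ you are trying to establish --- and the $\mathcal K_j$ are not self-affine tiles, so neither the Leung--Lau argument nor the neighbor graph $G$ of $\T$ applies to their boundaries directly.

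The paper avoids this circularity by invoking a criterion of Ngai--Tang: if one can exhibit a connected $Q\subset\T$ without cut points such that $\#(f_i(Q)\cap Q)\ge 2$ for every contraction $f_i$ of the IFS, then $\T$ has no cut point and the closure of every interior component is a disk. All the work goes into constructing such a $Q$ inside $\partial\T$ via the boundary parametrization: a circular chain of arcs $\alpha_1,\dots,\alpha_B,\alpha_1',\dots,\alpha_B'$ is refined to a simple closed curve $Q'$, then augmented by additional simple arcs $\gamma_i$ to secure the two-point intersection condition. The genuinely hard step is not the neighbor computation (already available in the literature) but the chain verifications and the property $\#(f_i(Q)\cap Q)\ge 2$.
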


As mentioned above, Item $(i)$ has already been proved in \cite{LauLeung07}. The objective of the present paper is thus to show Items $(ii)$ and $(iii)$.\\

The paper is organized as follows. In Section~\ref{sec:reduccns}, we show that we can restrict the topological study of the class of self-affine tiles with collinear digit set under consideration to a subclass. In this framework, we prove Item $(iii)$ of our Theorem in Section~\ref{sec:thcp}: $\T$ has a cut point for $2|A|-B \ge 5$. We give the explicit address of a point $z$ such that $\{z\}$ is the intersection of two sets $D_1,D_2\subset\T$ having no other common points and satisfying $D_1\cup D_2=\T$. Section~\ref{sec:prep} prepares for the proof of the cases $2|A|-B\in\{3,4\}$. The special case $A=4,B=5$ was considered by Ngai and Tang in~\cite{NgaiTang04}: the associated tile has no cut point. The proof required the construction of a curve $Q$ inside the tile with specific properties. We will use this technique. However, our construction of the curve $Q$ will rely on the boundary parametrization procedure for self-affine tiles recently introduced by the first two authors in~\cite{AkiyamaLoridant11}. In particular, our curve $Q$ will be a subset of the boundary $\partial \T$ of the tile. In Section~\ref{sec:prep}, we recall the method of Ngai and Tang in the study of cut points of self-affine tiles as well as the boundary parametrization procedure. We finally come to the proof of Item $(ii)$ of our Theorem. For technical reasons, we separate the two cases: Section~\ref{sec:thnocp3} is devoted to the case  $2|A|-B=3$. The case $2|A|-B=4$ can be treated with the same tools, but with significant changes in the construction of the curve $Q$. The differences to the case $2|A|-B=3$ are explained in Section~\ref{sec:thnocp4} and the complete proof, which is rather long, will be reproduced in a separate paper~\cite{Loridant0000}. 
 
\begin{remark}\label{rem:toth}
Our theorem applies to the subclass of so-called \emph{quadratic canonical number system tiles}, \emph{i.e.}, for the parameters $A,B\in\mathbb{Z}$ such that  $-1\leq A\leq B\geq 2$ and $x^2+Ax+B$ is irreducible. 

Leung and Lau included the case $B\leq -2$ in their study~\cite{LauLeung07}. Then the matrix $M$ is expanding if and only if $|A|\leq |B+2|$. The collinear digit set  has the form $\Di=\{0,v,2v,\ldots,(|B|-1)v\}$. Leung and Lau showed that $\T$ is homeomorphic to the closed disk if and only if $2|A|\leq |B+2|$. The techniques used in our paper would apply to the case $B\leq -2$. The boundary parametrization for $B\leq -2$ was performed in~\cite{AkiyamaLoridant10}. However, since the involved computations would be very lengthy, we prefer to restrict to the case $B\geq 2$. 
\end{remark}

\begin{remark}\label{conj}
In the literature, several examples of self-affine tiles happen to have finitely many interior components up to affinity. This property was proved for the Levy dragon in~\cite{Alster10} and the tile $\T$ of our class associated to the parameters $A=4,B=5$ in~\cite{BernatLoridantThuswaldner10}. In general, one may wonder whether every self-affine attractor has finitely many homeomorphy classes of interior components.    
\end{remark}

\begin{figure}[h]
\begin{tabular}{cc}
\includegraphics[width=4cm,height=4cm]{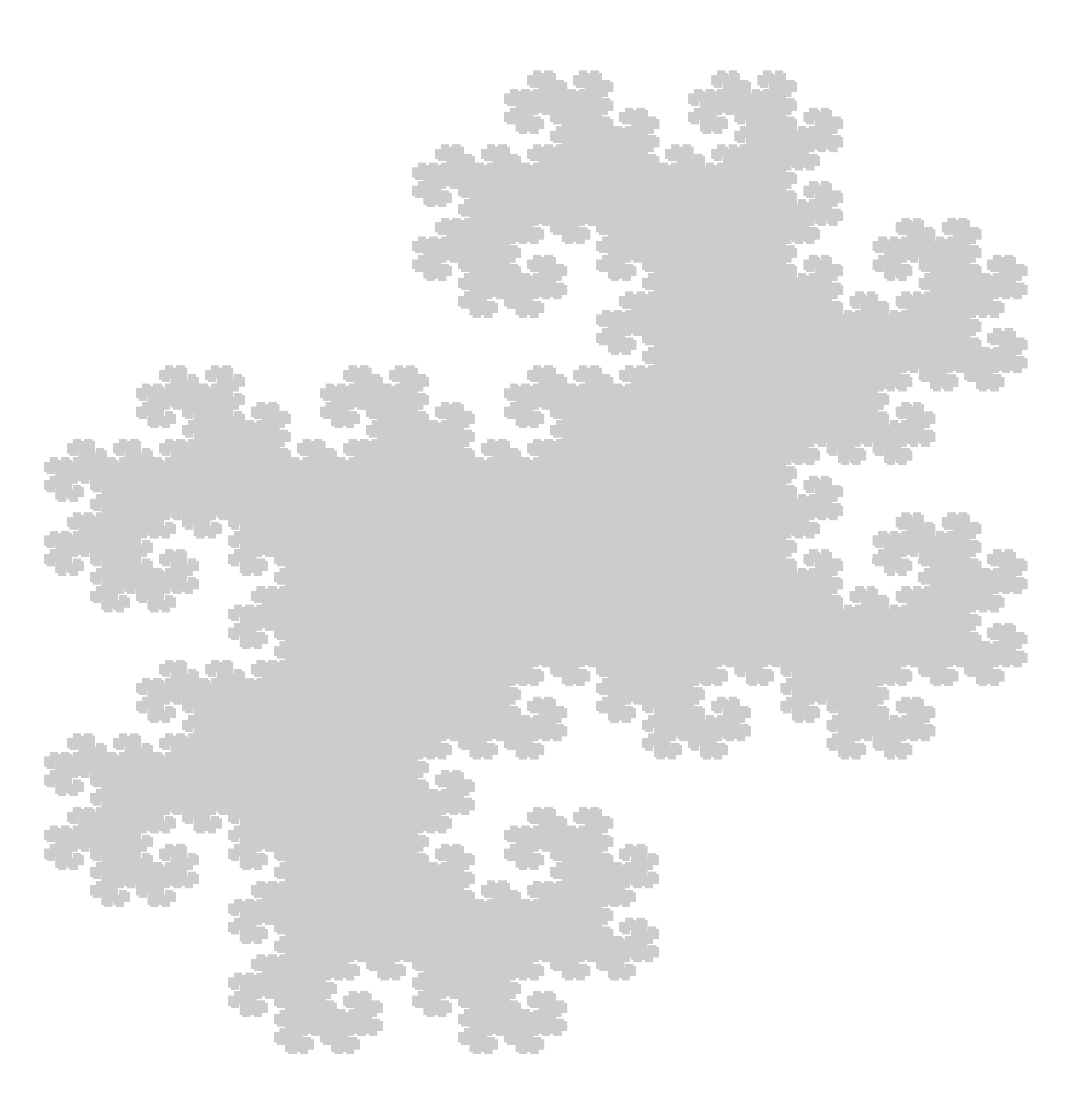}&\includegraphics[width=4cm,height=4cm]{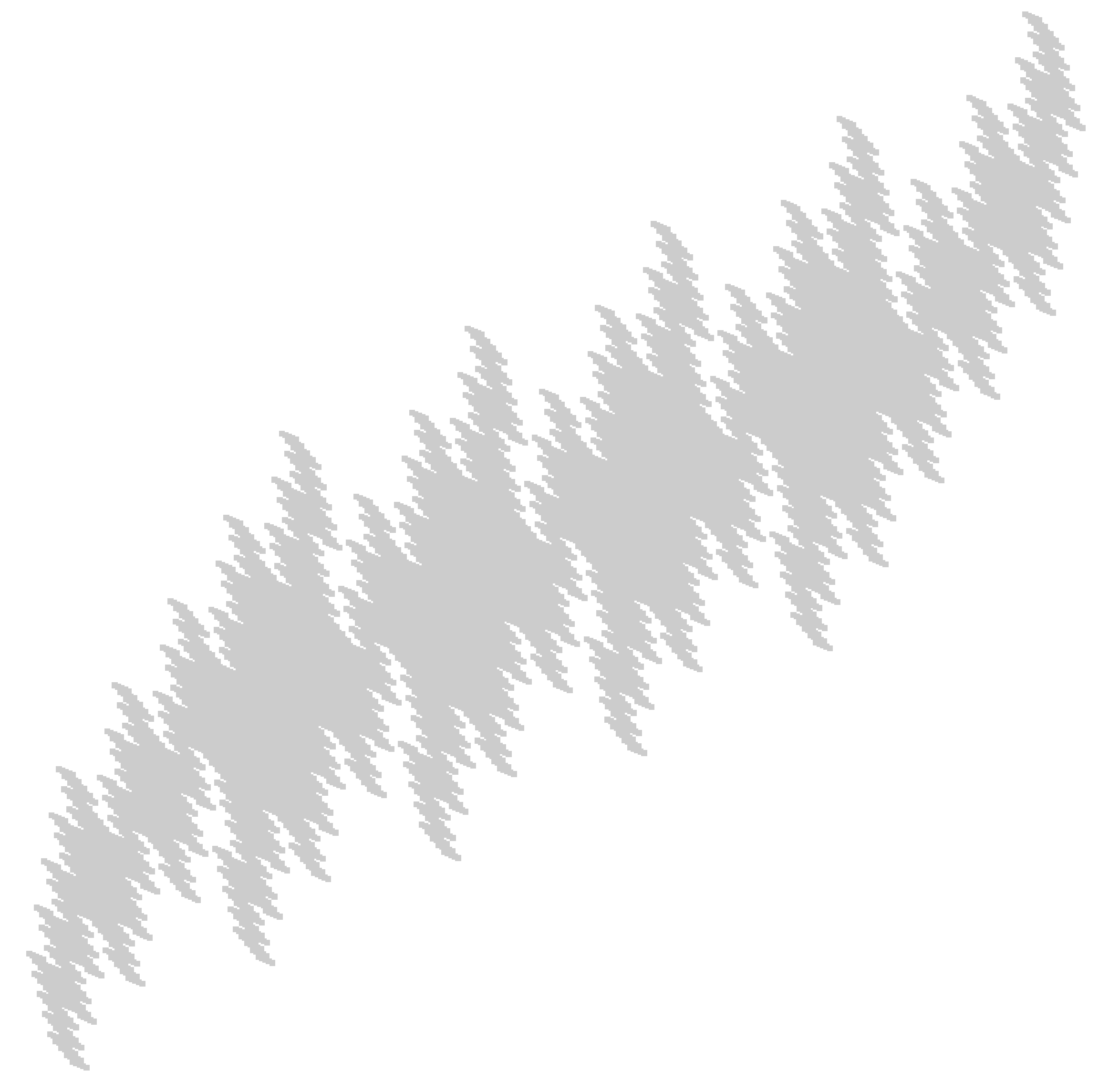}
\\
The Knuth dragon is disk-like.& Case $A=4,B=5$: this tile has no cut point.\\
\multicolumn{2}{c}{\includegraphics[width=5cm,height=6cm]{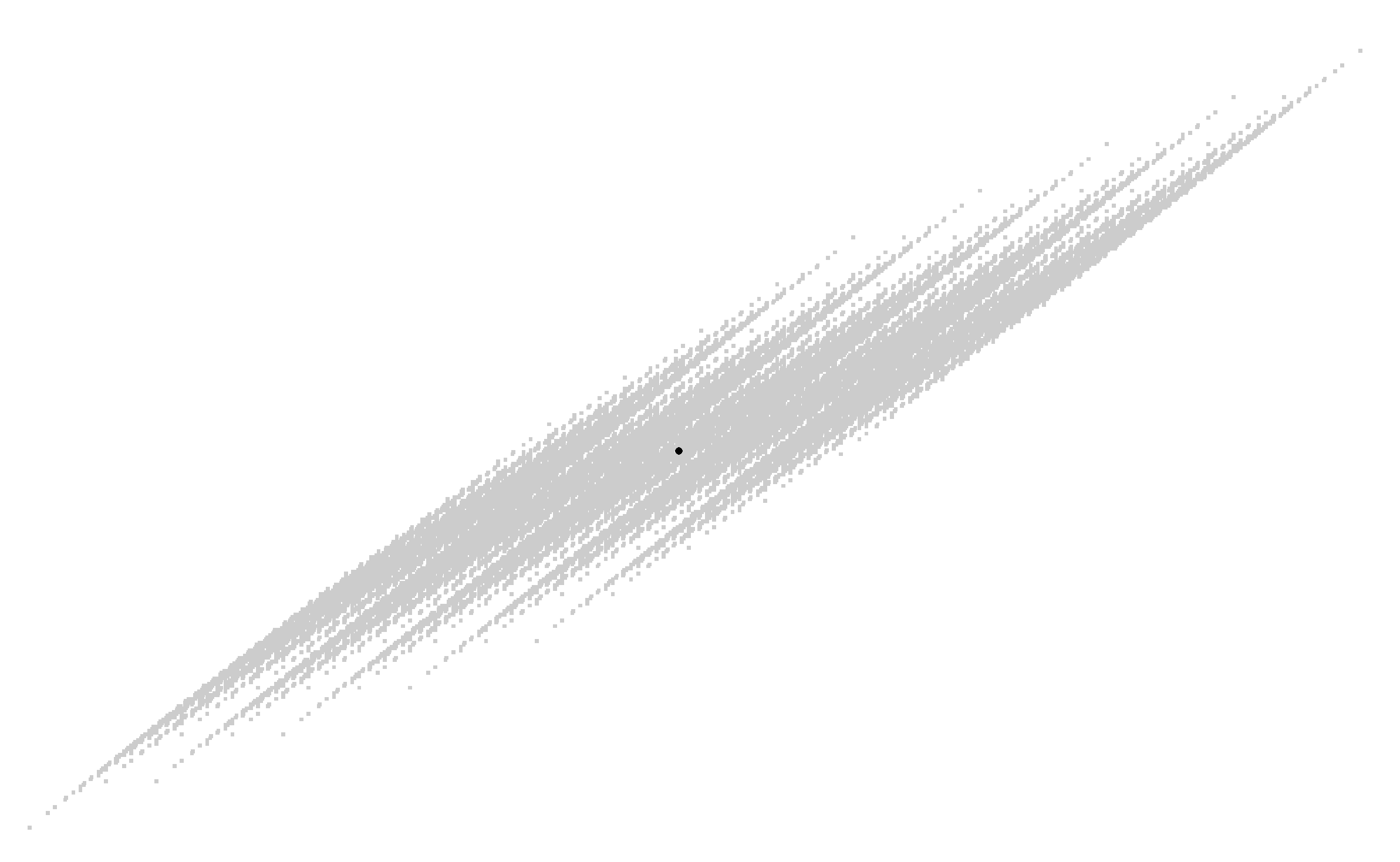}}\\
\multicolumn{2}{c}{Case $A=5,B=5$: this tile has a cut point.}

 \end{tabular}
 \caption{Several examples of tiles} \label{fig:Examples}
\end{figure}

\end{section}

\begin{section}{Restriction to the subclass $0< A\leq B\geq 2$}\label{sec:reduccns}
The following considerations can be found in~\cite{AkiyamaLoridant10}.
Let $A,B\in\mathbb{Z}$ with $|A|\leq B\geq 2$, $M_0\in\mathbb{Z}^{2\times 2}$ with characteristic polynomial $x^2+Ax+B$ and $\Di_0=\{0,v,2v,\ldots,(B-1)v\}$ for some $v\in\mathbb{Z}^2$ such that $v,M_0 v$ are linearly independent.
The self-affine tile $\T(M_0,\Di_0)$ is an affine transformation of the self-affine $\mathbb{Z}^2$-tile $\T(M,\Di)$ defined by~:
\begin{equation}\label{CollClass}
M=\left(\begin{array}{cc}0&-B\\1&-A\end{array}\right),
\;\Di=\left\{\left(\begin{array}{c}0\\0\end{array}\right),\ldots,\left(\begin{array}{c}B-1\\0\end{array}\right)\right\}.
\end{equation}  
Indeed, denote by $C$ the matrix of change of basis from the canonical basis to $(v,M_0 v)$. Then the relations
$$M=C^{-1}M_0C,\;\;\Di_0= C\Di
$$
hold and it follows that $\T(M_0,\Di_0)=C\T(M,\Di)$, thus these tiles have the same topological properties. As for $A=0$ the tile is just a rectangle, we will further suppose $A\ne 0$. Finally, we also mentioned in~\cite{AkiyamaLoridant10}  that changing $A$ to $-A$ corresponds to a reflection followed by a translation. This allows to restrict to the case $A>0$. Indeed, let 
\begin{equation}\label{switchdata}
P=\left(\begin{array}{cc}
1&0\\0&-1
\end{array}\right),\;\;
M_1=\left(\begin{array}{cc}
0&-B\\1&-A
\end{array}\right),
\;\;
M_2=\left(\begin{array}{cc}
0&-B\\1&A
\end{array}\right)
\end{equation}
and $\Di$ as in~(\ref{CollClass}). Moreover, let $\T_1:=\T(M_1,\Di)$ and $\T_2:=\T(M_2,\Di)$. Then one can check that
$PM_1P^{-1}=-M_2$, $P\Di=\Di$ and 
\begin{equation}\label{T12}
\T_2=P \T_1+\underbrace{\sum_{i\geq 0}M_2^{-2i-1}\left(\begin{array}{c}B-1\\0\end{array}\right)}_{=:\mathbf{v}}.
\end{equation}
In particular, the tiles $\T_1$ and $\T_2$ have the same topological properties.  By the above considerations, it is sufficient to prove the theorem for the case $0< A\leq B\geq 2$.

From now on, the tile $\T$ is the planar integral self-affine tile satisfying~\eqref{SATile} for the data $(M,\mathcal{D})$ given in~\eqref{CollClass}. It  can be explicitly written as
\begin{displaymath}
\mathcal{T}=\overline{\left\{\sum_{i=1}^nM^{-i}a_i;\
 a_i\in\mathcal{D},n\in\mathbb{N}\right\}}
\end{displaymath}
and gives rise to a tiling of $\mathbb{R}^2$. In other words, $\T$ is the closure of its interior and $\mathbb{R}^2$ is the non-overlapping union of the $\mathbb{Z}^2$-translates of $\T$:
$$\mathbb{R}^2=\bigcup_{x\in\mathbb{Z}^2}(\T+x)\;\textrm{ and }\lambda_2((\T+x)\cap(\T+x'))=0 \;(x\ne x'),
$$ 
where $\lambda_2$ is the two-dimensional Lebesgue measure (see \emph{e.g.}~\cite{AkiyamaThuswaldner05}).

\end{section}

\begin{section}{$\T$ has a cut point for $2A-B\geq 5$}\label{sec:thcp}

To prove Item $(iii)$ of our theorem, we will write down two compact subsets $D_1,D_2$ of $\T$ with $\T=D_1\cup D_2$ and show that $D_1\cap D_2$ consists of a single point, a cut point of $\T$. The assumption $2A-B\geq 5$ will be used to show that natural subdivisions of $\T$ divided among $D_1$ and $D_2$ are far enough from each other, thus do not intersect (see Proposition~\ref{1point}). 

 Let $-1\leq A\leq B\geq 2$. We define the \emph{set of neighbors} $\mathcal{S}$ by
\begin{displaymath}
\mathcal{S}=\left\{s\in\mathbb{Z}^2;\ s\neq 0,\ \,
\mathcal{T}\cap(\mathcal{T}+s)\neq \emptyset\right\}.
\end{displaymath}

It was computed in~\cite{AkiyamaThuswaldner05}. Let
\begin{equation}\label{eq:neigh}
P_n=\left(\begin{array}{c}n-(n-1)A\\-(n-1)\end{array}\right),\quad
Q_n=\left(\begin{array}{c}-n+nA\\n\end{array}\right),\quad
R=\left(\begin{array}{c}-A\\-1\end{array}\right),\qquad n\geq 1,
\end{equation}
then the set of neighbors consists of  $2+4J$ elements:
\begin{equation}\label{eq:neighset}
\mathcal{S}=\left\{\pm P_1,\ldots,\pm P_J,\pm Q_1,\ldots,\pm Q_J,\pm R\right\}.
\end{equation}
Here, 
\begin{equation}
\label{J}
J=\max\left\{\textstyle 1,\left\lfloor\frac{B-1}{B-A+1}
\right\rfloor\right\}.
\end{equation}
Note that
\begin{displaymath}
J>1 \qquad \text{iff} \qquad 2A-B\geq 3.
\end{displaymath}

We assume throughout this section that 
\begin{equation}\label{eq:ass}
2A-B\geq 5.
\end{equation}

\textbf{Notations.} Let $l\in\mathbb{N},m\in\mathbb{N}\cup\{\infty\}$, and digits $a_{-l},\ldots,a_{-1},a_0,a_1,\ldots,a_m\in\{0,\ldots,B-1\}$, we write 
\begin{equation}\label{eq:not}
a_{-l}\ldots a_{-1}a_0\;._M\; a_{1}a_2\ldots a_m \textrm{ or simply }a_{-l}\ldots a_{-1}a_0\;.\; a_{1}a_2\ldots a_m
\end{equation}  
for the point $\sum_{i=-l}^m M^{-i}\left(\begin{array}{c}a_i\\0\end{array}\right)\in\mathbb{R}^2$. If $m=0$, we just write $a_{-l}\ldots a_{-1}a_0$. Moreover, for digits $a_1,\ldots,a_m\in\{0,\ldots,B-1\}$ and $p\geq 0$, 
\begin{equation}\label{eq:not2}
(a_1\ldots a_m)^p=\underbrace{a_1\ldots a_m\cdots a_1\ldots a_m}_{p\textrm{ times}}
\end{equation}
 denotes the  $p$ successive concatenations of  $a_1\ldots a_m$. If $p=\infty$, we just write 
\begin{equation}\label{eq:not3}
\overline{a_1\ldots a_m}.
\end{equation}
Also, we denote by $a'$ the digit $B-1-a$. Moreover, we set
\[
a^{(n)} := \begin{cases} a, & n\equiv 0 \bmod 2\\
B-1-a, & n\equiv 1 \bmod 2.\\
\end{cases}
\]

Let us define

\begin{equation}\label{eq:D1}
D_1 :=\left \{ \sum_{j=1}^\infty M^{-j}\left(\begin{array}{c}a_j\\0\end{array}\right) \,;\,a_j\in\{0,\ldots,B-1\}, 
a_{1}^{(0)}a_{2}^{(1)}a_{3}^{(2)}
\ldots
\le_{lex} (A-3)(A-3)(A-3)\ldots \right\}
\end{equation}

and
\begin{equation}\label{eq:D2}
D_2 :=\left \{ \sum_{j=1}^\infty M^{-j}\left(\begin{array}{c}a_j\\0\end{array}\right) \,;\,\,a_j\in\{0,\ldots,B-1\}, 
a_{1}^{(0)}a_{2}^{(1)}a_{3}^{(2)}
\ldots
\ge_{lex} (A-3)(A-3)(A-3)\ldots \right\}.
\end{equation}
These sets can easily be written as graph directed sets. The corresponding graph is depicted in Figure~\ref{fig:GIFS} in the sense of~\cite{MauldinWilliams88}. In particular, $D_1$ and $D_2$ are closed compact subsets of $\T$ satisfying
$$
\T=D_1\cup D_2.
$$

\begin{figure}
\includegraphics[width=110mm,height=50mm]{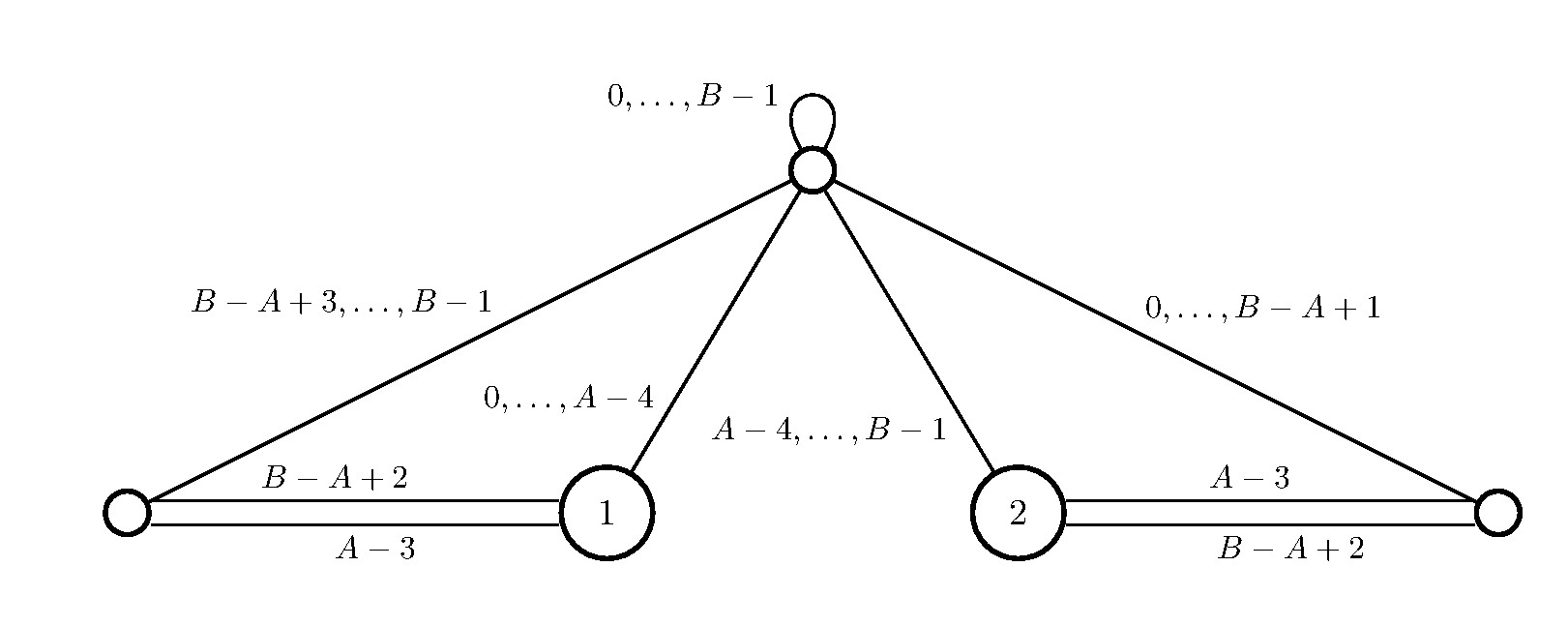}
\caption{$D_i$ is the solution of the above GIFS associated to the state $i$ ($i=1,2$)}
\label{fig:GIFS}
\end{figure}

\begin{proposition}\label{1point}
We have
\[
D_1 \cap D_2 = \left\{
0.(A-3)(A-3)'(A-3)(A-3)'\ldots\right\}
\]
In particular, this intersection consists of a single point.
\end{proposition}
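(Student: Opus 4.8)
The plan is to prove the two inclusions $D_1\cap D_2\supseteq\{z_0\}$ and $D_1\cap D_2\subseteq\{z_0\}$ separately, where $z_0:=0.(A-3)(A-3)'(A-3)(A-3)'\ldots$ and $\mathbf e:=\binom10$. The inclusion $\supseteq$ is a direct verification: the digits of $z_0$ are $a_j=A-3$ for odd $j$ and $a_j=(A-3)'=B-A+2$ for even $j$, and for every $j$ one computes $a_j^{(j-1)}=A-3$ --- for odd $j$ the superscript $j-1$ is even, so $a_j^{(j-1)}=a_j=A-3$, while for even $j$ it is odd, so $a_j^{(j-1)}=(a_j)'=\bigl((A-3)'\bigr)'=A-3$. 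Hence the sequence $a_1^{(0)}a_2^{(1)}a_3^{(2)}\ldots$ equals $(A-3)(A-3)(A-3)\ldots$, which is simultaneously $\le_{lex}$ and $\ge_{lex}$ the threshold appearing in \eqref{eq:D1}--\eqref{eq:D2}; therefore $z_0\in D_1\cap D_2$.

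For the inclusion $\subseteq$ I would determine the address of an arbitrary $z\in D_1\cap D_2$ one symbol at a time. By definition $z$ admits an expansion $(a_j)_j$ with $a_1^{(0)}a_2^{(1)}\ldots\le_{lex}\overline{(A-3)}$ and an expansion $(b_j)_j$ with $b_1^{(0)}b_2^{(1)}\ldots\ge_{lex}\overline{(A-3)}$; comparing first symbols gives $a_1\le A-3\le b_1$. If some single expansion of $z$ satisfies both inequalities, then equality holds, the decorated sequence is forced to be $(A-3)(A-3)(A-3)\ldots$, and $z=z_0$. Otherwise the two expansions are distinct; since $z\in M^{-1}(\T+a_1\mathbf e)\cap M^{-1}(\T+b_1\mathbf e)$ we get $(b_1-a_1)\mathbf e\in\mathcal S\cup\{0\}$, and an inspection of \eqref{eq:neighset} (the only neighbours of the form $k\mathbf e$ are $\pm P_1=\pm\mathbf e$) forces $b_1-a_1\in\{0,1\}$. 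Together with $a_1\le A-3\le b_1$ this leaves either $a_1=b_1=A-3$ or the two \emph{off-diagonal} options $(a_1,b_1)\in\{(A-4,A-3),(A-3,A-2)\}$. Granting that the off-diagonal options cannot occur (see the next paragraph), we have $a_1=b_1=A-3$; peeling off this symbol and using \eqref{eq:D1}--\eqref{eq:D2} --- the residual conditions on the tails, after a shift of parity and taking complements, are again of the form \eqref{eq:D1}--\eqref{eq:D2} but with $A-3$ replaced by its complement $(A-3)'$ --- one finds that $Mz-(A-3)\mathbf e$ lies in an intersection of exactly the same type with threshold digit $(A-3)'$ (this flip is precisely the edge of the graph in Figure~\ref{fig:GIFS}). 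Repeating the argument forces the next symbol to be $(A-3)'$, then $A-3$ again, and so on, so that the address of $z$ must be $(A-3)(A-3)'(A-3)(A-3)'\ldots$, i.e. $z=z_0$.

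The hard part, and the only place the hypothesis $2A-B\ge5$ enters, is excluding the off-diagonal cases. The point is that $2A-B\ge5$ is equivalent to $A-3\ge(A-3)'$, i.e. the threshold digit lies in the upper half of $\{0,\dots,B-1\}$ (and after the flip, $(A-3)'$ lies in the lower half); this is what keeps the parts of the successive subdivisions of $\T$ assigned to $D_1$ and to $D_2$ separated by at least two indices apart from the single shared subtile, so that the overlap of a ``pure $D_1$'' subtile with a ``pure $D_2$'' subtile along a common face is empty. Making this rigorous amounts to a distance estimate between the relevant subtiles, obtained by feeding the finitely many neighbour vectors of \eqref{eq:neighset} into the set equation \eqref{SATile}; this is the quantitative claim announced just before the proposition. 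Equivalently, one may realize $D_1\cap D_2$ itself as a graph-directed self-affine set built on the neighbour graph of $\T$: under $2A-B\ge5$ this graph collapses to a single cycle of length two, carrying the digit labels $A-3$ and $(A-3)'$, whose unique associated solution is the point $z_0$ --- which simultaneously shows that the intersection is a single point and identifies it.
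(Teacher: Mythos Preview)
Your overall structure is sound and matches the paper's: verify that $z_0\in D_1\cap D_2$, then use the neighbour set~\eqref{eq:neighset} to pin down the digits of any $z\in D_1\cap D_2$ one level at a time. Your verification of the inclusion $\supseteq$ is correct and more explicit than the paper's. The reduction to the three first-digit possibilities $(a_1,b_1)\in\{(A-3,A-3),(A-4,A-3),(A-3,A-2)\}$ is also correct, and the ``peel off and flip'' recursion you describe is exactly the parity alternation that the paper encodes in the sets $G_n$ of~\eqref{ia}.

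The gap is that you never actually exclude the off-diagonal cases. You write that this ``is the quantitative claim announced just before the proposition'', but there is no such prior claim: the sentence preceding the proposition is only a forward announcement of what Proposition~\ref{1point} itself proves. Your alternative phrasing --- realising $D_1\cap D_2$ as a graph-directed set on the neighbour graph and asserting that under $2A-B\ge 5$ the graph collapses to a $2$-cycle --- is correct in spirit and equivalent to the paper's argument, but the collapse is precisely what must be computed, and you do not compute it. This is not a minor omission: it is the entire content of the proposition, and the only place the hypothesis $2A-B\ge 5$ is used.

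Concretely, what is missing is the following case analysis (carried out in the paper). Say $(a_1,b_1)=(A-4,A-3)$. The $D_1$-constraint on $a_2$ is vacuous, while the $D_2$-constraint forces $b_2\le B-A+2$. One must show that
\[
(\T+(A-4)\mathbf e+M^{-1}a_2\mathbf e)\cap(\T+(A-3)\mathbf e+M^{-1}b_2\mathbf e)=\emptyset
\]
for every such $a_2,b_2$. Writing $b-a=M\mathbf e+(b_2-a_2)\mathbf e=\binom{b_2-a_2}{1}$ and comparing with~\eqref{eq:neighset}, membership in $\mathcal S$ would require $b_2-a_2\in\{A-2,A-1,A\}$; but $b_2-a_2\le B-A+2$, and $2A-B\ge 5$ gives $A-2\ge B-A+3>B-A+2$, a contradiction. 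The case $(a_1,b_1)=(A-3,A-2)$ is symmetric, and one must also dispose of the ``double off-diagonal'' possibility $(A-4,A-2)$ at the next level (the paper's first case). Once these computations are written down, your induction closes exactly as you describe; without them, the argument is a sketch rather than a proof.
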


\begin{proof}

We are interested in points where $D_1$ and $D_2$ coincide. Let us set $I := D_1 \cap D_2$. We have
\begin{eqnarray*}
D_1 &\subset&  M^{-1}[\T \cup (\T + 1) \cup \cdots \cup (\T +(A-3))], \\
D_2 &\subset&  M^{-1}[(\T + (A-3)) \cup \cdots \cup (\T +(B-1))]. \\
\end{eqnarray*}

Thus, in order to be in $I$, a point $x$ has to be located in one 
of the sets
\[
(\T +a) \cap (\T+ b), \qquad 0\leq a \le A-3 \le b\leq B-1.
\]
This intersection is empty unless $\left(\begin{array}{c}a-b\\0\end{array}\right)\in\mathcal{S}$ is a neighbor of $\T$. By~(\ref{eq:neigh}) and~\eqref{eq:neighset}, 
this is true only for $(a-b) \in \{0,\pm 1\}$. Hence $I$ has to be 
contained in 
\[
M^{-1} [(\T+A-4) \cup (\T+A-3) \cup (\T+A-2)]  =: G_0
\]

We proceed to an induction proof. For $n\geq 0$, we will use the abbreviation 
$$S_n := (A-3)^{(0)}\ldots(A-3)^{(n-1)},$$ $S_0$ being the empty word. Also, for $n\geq 0$, let 
\begin{equation}\label{ia}
G_n:= M^{-n-1}[ (\T+S_n(A-4)^{(n)} ) \cup (\T+S_n(A-3)^{(n)}) \cup 
(\T+S_n(A-2)^{(n)})]  .
\end{equation}
By the previous lines, $I\subset G_0$. 
Assume now  that  $I\subset G_n$ for some $n\geq 0$.  We want to prove that $I\subset G_{n+1}$.

Suppose that $n$ is even. By the set equation~(\ref{SATile}), we can write $G_n$ as 
\[
G_n = \bigcup_{j\in \{0,\ldots,B-1\}} M^{-n-2} [(\T+ S_n(A-4)j' )
\cup (\T+S_n(A-3)j' ) 
\cup (\T+S_n(A-2)j')].
\]
By the definition of $D_1$ and $D_2$ we conclude that 
\begin{eqnarray*}
 D_1 &\cap& G_n \subset \bigcup_{j\in \{0,\ldots,B-1\}} M^{-n-2} (\T+ S_n(A-4)j')   \\
&&\cup 
\bigcup_{j\in \{0,\ldots,A-3\}}M^{-n-2} (\T+S_n(A-3)j') , \\
 D_2 &\cap& G_n \subset \bigcup_{j\in \{A-3,\ldots,B-1\}} M^{-n-2} (\T+(A-3)j')
\\
&&\cup 
\bigcup_{j\in \{0,\ldots,B-1\}} M^{-n-2}(\T+S_n(A-2)j').
\end{eqnarray*}
Thus any point of $I$  has to be contained in some
\begin{equation}\label{ab}
M^{-n-2}[(\T + a) \cap (\T+ b)]
\end{equation}
where 
\begin{eqnarray*}
a \in \{     S_n(A-4)j' ;\;       0\le j\le B-1     \} \cup \{ 
S_n(A-3)j';\;  0\le j\le A-3  \}, \\
b \in \{     S_n(A-2)j' ;\;       0\le j\le B-1     \} \cup \{ 
S_n(A-3)j';\;  A-3 \le j\le B-1  \}. \\
\end{eqnarray*}

We want to show that the intersection in~\eqref{ab} is empty 
unless 
\begin{eqnarray}\label{aabb}
a &\in& \{ S_n(A-3)(A-4)', S_n(A-3)(A-3)' \} \quad\hbox{and}  \nonumber\\
b &\in& 
\{ S_n(A-3)(A-3)', S_n(A-3)(A-2)' \}. 
\end{eqnarray}
If this is proved we are ready because this implies that 
$I\subset G_{n+1}$.

The intersection in \eqref{ab} is nonempty if and only if $b-a$ equals $0$ or $b-a$
is contained in the set of neighbors given in~\eqref{eq:neigh} and~\eqref{eq:neighset}.

First let $a=S_n(A-4)j$ and 
$b=S_n(A-2)k$ with $j,k \in \{0,\ldots, B-1\}$. Then
\[
b-a=\left(\begin{array}{c}k-j\\0\end{array}\right)
+M\left(\begin{array}{c}2\\0\end{array}\right)=\left(\begin{array}{c}k-j\\2\end{array}\right).
\]
Note that $k-j \le B-1$. However, for $b-a$ to be a neighbor, we must have 
$$k-j \in \{-3+2A, 
-2+2A\}.$$ 
By our main assumption~\eqref{eq:ass}, we have $-2+2A>-3+2A\geq B+2$. This is impossible.

Second let $a=S_n(A-4)j$ and $b=S_n(A-3)k$ with $j \in 
\{0,\ldots,B-1\}$ and $k \in \{0, \ldots, \underbrace{B-A+2}_{=B-1-(A-3)}\}$. Then
\[
b-a=\left(\begin{array}{c}k-j\\0\end{array}\right)
+M\left(\begin{array}{c}1\\0\end{array}\right)=\left(\begin{array}{c}k-j\\1\end{array}\right). 
\]
Note that $k-j \le B-A+2$. However, for $b-a$ to be a neighbor, we must have 
$$k-j \in \{-2+A, -1+A, 
A\}.$$
By our main assumption~\eqref{eq:ass}, we have $A>-1+A>-2+A\geq B-A+3 $. This is impossible.

Third let $a=S_n(A-3)j$ and $b=S_n(A-2)k$ with $j \in 
\{B-A+2,\ldots, B-1\}$ and $k \in \{0, \ldots B-1\}$. Then
\[
 b-a=\left(\begin{array}{c}k-j\\0\end{array}\right)
+M\left(\begin{array}{c}1\\0\end{array}\right)=\left(\begin{array}{c}k-j\\1\end{array}\right). 
\]
Note that $k-j \le A-3$. However, for $b-a$ to be a neighbor, we must have 
$$k-j \in \{-2+A, -1+A, 
A\}.
$$ 
This is impossible.

Fourth let $a=S_n(A-3)j$ and $b=S_n(A-3)k$ with $j \in 
\{B-A+2,\ldots, B-1\}$ and $k \in \{0, \ldots, B-A+2\}$. Then 
$$b-a=\left(\begin{array}{c}k-j\\0\end{array}\right).$$  
Note that $k-j\leq 0$. Moreover,  for $b-a$ to be a neighbor, we must have $k-j\in\{\pm 1\}$. This leads to the possible pairs
$$\left\{\begin{array}{l}
k=B-A+2=(A-3)'=j\\
k=B-A+2=(A-3)',\;j=B-A+3=(A-4)'\\
k=B-A+1=(A-2)',\;j=B-A+2=(A-3)'
\end{array}\right..
$$

Therefore, the intersection in~\eqref{ab} is nonempty for 
$$\left\{\begin{array}{l}
a=S_n(A-3)(A-3)'=b\\
a=S_n(A-3)(A-4)',\;b=S_n(A-3)(A-3)'\\
a=S_n(A-3)(A-3)',\;b=S_n(A-3)(A-2)'
\end{array}\right..
$$
These are constellations of~\eqref{aabb}. It follows that any point of $I$ has to be contained in $G_{n+1}$. The case of $n$ odd can be treated in an analogous way. \\

We conclude that $I=D_1\cap D_2$ is contained in $G_n$ defined in~(\ref{ia}) for all $n\geq 0$. Since the three sets
$$M^{-n-1}[ \T+S_n(A-4)^{(n)} ],\; M^{-n-1}[\T+S_n(A-3)^{(n)}],\; 
M^{-n-1}[\T+S_n(A-2)^{(n)}] 
$$ 
all converge to the point set  $\{0.(A-3)(A-3)'(A-3)(A-3)'\ldots\}$ in Hausdorff metric, this shows that $I$ consists in a unique point.
\end{proof}

\begin{theorem}\label{th:main}
If $2A-B \ge 5$ then 
$\mathcal{T}$ has at least one cut point, namely, the point
\[
0.(A-3)(A-3)'(A-3)(A-3)'(A-3)\ldots
\]
\end{theorem}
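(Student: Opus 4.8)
The plan is to deduce Theorem~\ref{th:main} almost immediately from Proposition~\ref{1point} together with the general structure already set up. The point is that Proposition~\ref{1point} does more than identify a single common point of $D_1$ and $D_2$: combined with the already-noted decomposition $\T = D_1 \cup D_2$, it exhibits $\T$ as the union of two compact sets whose intersection is the singleton $\{z\}$ with $z = 0.(A-3)(A-3)'(A-3)(A-3)'\ldots$. First I would record that $\T$ is connected (this follows from the general theory of self-affine tiles with collinear digit set, or directly since $\T$ is even homeomorphic to a disk in some subcases and is a continuum in all cases), and that $D_1, D_2$ are each connected --- this can be read off the graph-directed construction in Figure~\ref{fig:GIFS}, or argued as in the connectedness criteria of~\cite{KiratLau00}, since consecutive ``slices'' share neighbors.

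\textbf{Main step.} The core observation is the elementary topological fact: if a connected space $\T$ is written as $\T = D_1 \cup D_2$ with $D_1, D_2$ closed, $D_1 \cap D_2 = \{z\}$, and both $D_1 \setminus \{z\}$ and $D_2 \setminus \{z\}$ nonempty, then $z$ is a cut point of $\T$, because $\T \setminus \{z\} = (D_1 \setminus \{z\}) \sqcup (D_2 \setminus \{z\})$ is a separation into two nonempty relatively closed (equivalently relatively open) sets. So I would verify the two nonemptiness conditions: $D_1$ contains, e.g., the point $0.\overline{0}$ (the origin), since the digit string $0^{(0)}0^{(1)}\ldots \le_{lex} (A-3)(A-3)\ldots$, and this point is $\neq z$; symmetrically $D_2$ contains $0.\overline{B-1}$, which is $\neq z$. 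Here one uses $2A-B \ge 5$, hence $A \ge 3$, so that $A-3 \ge 0$ and the digit strings defining $D_1, D_2$ make sense and $z$ is genuinely an interior lexicographic value, not an endpoint.

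\textbf{Expected obstacle.} I expect the only genuine subtlety --- and it is minor --- to be checking that $D_1 \setminus \{z\}$ and $D_2 \setminus \{z\}$ are each \emph{closed} in $\T \setminus \{z\}$, i.e. that the separation is legitimate; but this is automatic since $D_1, D_2$ are closed in $\T$ and removing the single point $z$ from both preserves closedness in the subspace $\T\setminus\{z\}$. A second point worth a sentence is that the separation is nontrivial, i.e. $\T \setminus \{z\}$ really is disconnected and not accidentally all of one piece --- but this is exactly the content of $D_i \setminus \{z\} \neq \emptyset$ for both $i$, already handled. Thus the proof is: invoke Proposition~\ref{1point} for $D_1 \cap D_2 = \{z\}$, invoke $\T = D_1 \cup D_2$ and the connectedness of $\T$, exhibit points of $D_1 \setminus\{z\}$ and $D_2\setminus\{z\}$, and conclude via the separation $\T \setminus \{z\} = (D_1\setminus\{z\}) \cup (D_2\setminus\{z\})$ that $z$ is a cut point. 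The heavy lifting has all been done in Proposition~\ref{1point}; Theorem~\ref{th:main} is the short topological corollary.
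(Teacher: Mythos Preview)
Your proposal is correct and follows essentially the same route as the paper: decompose $\T=D_1\cup D_2$, invoke Proposition~\ref{1point} to get $D_1\cap D_2=\{z\}$, and conclude that $\T\setminus\{z\}=(D_1\setminus\{z\})\cup(D_2\setminus\{z\})$ is a separation. The paper phrases the separation via the closure condition $\overline{D_i\setminus\{z\}}\cap(D_j\setminus\{z\})=\emptyset$, which is equivalent to what you wrote; your explicit check that $D_1\setminus\{z\}$ and $D_2\setminus\{z\}$ are nonempty (via $0.\overline{0}$ and $0.\overline{B-1}$) is a detail the paper leaves implicit, and your remark on connectedness of $D_1,D_2$ is extraneous but harmless.
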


\begin{proof}
 We define $D_1$ and $D_2$ as in~\eqref{eq:D1} and~\eqref{eq:D2}. As mentioned at the beginning of the section,  these sets are compact sets satisfying $\mathcal{T}=D_1\cup D_2$.  
 
 We denote by $z$ the point $0.(A-3)(A-3)'(A-3)(A-3)'(A-3)\ldots$ By Proposition~\ref{1point}, 
 $$D_1\cap D_2=\{z\}.
$$
Therefore, 
$$\T\setminus \{z\}=(D_1\setminus\{z\})\cup (D_2\setminus\{z\}), 
$$
where 
$$\overline{D_1\setminus\{z\}}\cap (D_2\setminus\{z\})=D_1\cap (D_2\setminus\{z\})=\emptyset $$ 
as well as 
$$\overline{D_2\setminus\{z\}}\cap (D_1\setminus\{z\})=D_2\cap( D_1\setminus\{z\})=\emptyset. 
$$ 
In other words, $z$ is a cut point of the connected set $\T$.
\end{proof}

The cut point obtained in Theorem~\ref{th:main} is depicted for $A=5,B=5$ within the tile at the bottom of Figure~\ref{fig:Examples}. 

\end{section}

\begin{section}{Two techniques in the study of self-affine tiles}\label{sec:prep} 
In this section, we recall two techniques in the topological study of self-affine tiles. The first one goes back to Ngai and Tang and aims at showing that a self-affine tile has no cut point. Recall that an \emph{iterated function system} (IFS) $\{f_i\}_{i=1}^m$ of injective contractions on $\mathbb{R}^2$ has a unique nonempty compact \emph{attractor set} $\T$ satisfying
$$\T=\bigcup_{i=1}^mf_i(\T)
$$
(\cite{Hutchinson81}). Moreover, $\{f_i\}_{i=1}^m$ satisfies the \emph{open set condition} (OSC) whenever there exists a nonempty bounded open set $U\subset \mathbb{R}^2$ such that 
$$\bigcup_{i=1}^mf_i(U)\subset U\;\textrm{ and }\;\forall i\ne j, \;f_i(U)\cap f_j(U)=\emptyset.
$$
Self-affine tiles $\T$ satisfy the open set condition by taking $U=\textrm{int}(\T)$. 
\begin{theorem}[\cite{NgaiTang04}]\label{th:nt04}
Let $\T$ be the attractor of an IFS $\{f_i\}_{i=1}^m$ of injective contractions on $\mathbb{R}^2$ satisfying the open set condition. Assume that there exists a connected subset $Q$ of $\T$ without cut points such that for each $i\in\{1,\ldots,m\}$, $\#(f_i(Q)\cap Q)\geq 2$. Then $\T$ is connected, has no cut point, and the closure of each component of $\mathrm{int}(\T)$ is homeomorphic to a closed disk.
\end{theorem}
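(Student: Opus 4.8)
The plan is to prove the three assertions in turn; the absence of cut points is the heart of the matter, connectedness and local connectedness being soft, and the statement on interior components reducing to classical planar topology.

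\textbf{Connectedness.} For a word $w=w_1\ldots w_n$ over $\{1,\ldots,m\}$ write $f_w:=f_{w_1}\circ\cdots\circ f_{w_n}$ and $\mathcal Q_n:=\bigcup_{|w|=n}f_w(Q)$, with $\mathcal Q_0:=Q$. First I would show by induction on $N$ that $E_N:=\bigcup_{n=0}^N\mathcal Q_n$ is connected: $E_0=Q$ is connected, and $E_N=E_{N-1}\cup\bigcup_{i=1}^m f_i(E_{N-1})$, where each $f_i(E_{N-1})$ is connected (continuous image of a connected set) and meets $E_{N-1}$, since $Q\cup f_i(Q)\subseteq E_{N-1}\cup f_i(E_{N-1})$ and $f_i(Q)\cap Q\neq\emptyset$. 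Hence $\bigcup_n\mathcal Q_n$ is connected, and since each $x\in\T$ lies in $f_{\omega_1\ldots\omega_n}(\T)$ for some infinite word $\omega$ and all $n$, with $f_{\omega_1\ldots\omega_n}(Q)$ within $\mathrm{diam}\,f_{\omega_1\ldots\omega_n}(\T)\to0$ of $x$, we get $\T=\overline{\bigcup_n\mathcal Q_n}$. Thus $\T$ is a continuum.

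\textbf{No cut point.} Suppose for contradiction that $x\in\T$ is a cut point, so $\T\setminus\{x\}=P\cup R$ with $P,R$ nonempty, disjoint and open in $\T\setminus\{x\}$, hence (since $\{x\}$ is closed) open in $\T$. Fix $p\in P$, $r\in R$. Because $\max_{|w|=n}\mathrm{diam}\,f_w(\T)\to0$, for $n$ large one finds words $v,w$ of length $n$ with $p\in f_v(\T)\subseteq P$ and $r\in f_w(\T)\subseteq R$; in particular $x\notin f_v(\T)\cup f_w(\T)$. Let $t$ be the longest common prefix of $v$ and $w$, and consider the finite chain of cylinder sets
\[
f_v(Q),\ f_{v|_{n-1}}(Q),\ \ldots,\ f_t(Q),\ \ldots,\ f_{w|_{n-1}}(Q),\ f_w(Q),
\]
where $v|_k$ denotes the length-$k$ prefix of $v$. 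Consecutive members have the form $f_u(Q)$ and $f_u(f_i(Q))$, hence share at least two points, namely $f_u(f_i(Q)\cap Q)$. Now delete $x$: each $f_u(Q)\setminus\{x\}$ is still connected, since $f_u$ is a continuous injection and therefore $f_u(Q)$ inherits from $Q$ the absence of cut points; and two consecutive members still meet, since they shared at least two points while only $x$ was removed. Hence the union $\Gamma$ of the sets $f_u(Q)\setminus\{x\}$ along the chain is a connected subset of $\T\setminus\{x\}$ meeting $P$ (it contains $f_v(Q)\subseteq f_v(\T)\subseteq P$) and meeting $R$ (it contains $f_w(Q)\subseteq R$), contradicting that $P\cup R$ is a separation. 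I expect this to be the main obstacle: one has to keep the chain finite, to make sure every overlap of size at least two survives the removal of the single point $x$ — which is precisely where both hypotheses (that $Q$ has no cut point, and that $\#(f_i(Q)\cap Q)\geq2$) are indispensable — and to check that the two extreme cylinders lie on opposite sides of the separation.

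\textbf{Local connectedness and interior components.} For each $x\in\T$ and each $n$, the union $U_n(x)$ of those level-$n$ cylinders $f_w(\T)$ containing $x$ is connected (all of them contain $x$), contains the open neighbourhood $\T\setminus\bigcup\{f_w(\T):|w|=n,\,x\notin f_w(\T)\}$ of $x$, and has diameter tending to $0$; so $\T$ is a Peano continuum. Being a Peano continuum in $S^2$ without cut points, each complementary domain of $\T$ is bounded by a simple closed curve (Whyburn \cite{Whyburn79}, cf.\ its use in \cite{NgaiTang05}); using the open set condition to relate the components of $\mathrm{int}(\T)$ to such complementary domains — for the tiles of the present paper this is transparent, since $\T$ equals the closure of its interior and tiles $\mathbb{R}^2$, so that $\mathrm{int}(\T)=\mathbb{R}^2\setminus\bigcup_{g\neq0}(\T+g)$ — the Schoenflies theorem then yields that the closure of each component of $\mathrm{int}(\T)$ is homeomorphic to a closed disk; see \cite{NgaiTang04}.
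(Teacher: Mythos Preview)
The paper does not prove this theorem; it is quoted from Ngai and Tang \cite{NgaiTang04} and used as a black box, the only related content being the remark after Theorem~\ref{th:ii3} pointing to Torhorst's theorem (Lemma~\ref{thorhorst}) for the disk-likeness part. So there is no proof in the paper to compare against.

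That said, your reconstruction is essentially the Ngai--Tang argument and is correct in its main steps. The prefix-chain construction for the cut-point part is exactly the right idea, and you have pinpointed precisely where each hypothesis enters: the absence of cut points of $Q$ guarantees that each $f_u(Q)\setminus\{x\}$ stays connected, and the cardinality $\ge 2$ of the overlaps $f_i(Q)\cap Q$ guarantees that consecutive links of the chain still meet after the single point $x$ is removed. One small point worth making explicit is that $f_u$ is a homeomorphism onto its image (injective continuous map on a compact Hausdorff space), so that $f_u(Q)$ really inherits the no-cut-point property from $Q$.

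For the final assertion, your sketch is a bit thin in the general IFS setting: Torhorst's theorem speaks about components of $S^2\setminus M$ for a locally connected continuum $M$ without cut points, whereas you want components of $\mathrm{int}(\T)$, and the bridge between the two is not entirely trivial. Your tiling argument (via $\mathrm{int}(\T)=\mathbb{R}^2\setminus\bigcup_{g\neq 0}(\T+g)$) is fine for the $\mathbb{Z}^2$-tiles that the present paper actually treats, which is all that is needed here; for the general statement one either returns to \cite{NgaiTang04} or argues more carefully about $\partial\T$.
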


To prove that the tile $\T$ has no cut point for $2A-B\in\{3,4\}$, we will construct a curve  $Q\subset\partial \T$ with the properties required in Theorem~\ref{th:nt04}. Our construction will rely on a second technique, the boundary parametrization of self-affine tiles introduced in~\cite{AkiyamaLoridant11}. This technique gives rise to a H\"older mapping $C:[0,1]\to\partial\T$ with $C(0)=C(1)$ that follows the fractal boundary. The boundary parametrization of the class of quadratic CNS tiles was treated as an example in~\cite{AkiyamaLoridant11}. It reads as follows. 

The boundary parametrization relies on a \emph{graph directed iterated function system} (GIFS) for the boundary in the sense of~\cite{MauldinWilliams88}. This GIFS describes the subdivision process of the boundary, as stated in Proposition~\ref{prop:primsubautomaton} below. Let $G$ be the graph depicted on the left of Figure~\ref{primsubautomaton}. We call 
$$\mathcal{R}=\{\pm P_1,\pm Q_1,\pm R\},
$$ 
as defined in Equation~(\ref{eq:neigh}). 
Moreover, for $s,s'\in\mathcal{R}$ and $a,a'\in\mathcal{D}$, there is an edge 
$$s\xrightarrow{a|a'}s'\in G:\iff M s+a'=s'+a.
$$
We sometimes simply write $s\xrightarrow{a}s'$, since $a'$ is uniquely defined by the above equation. The graph $G$ has the following properties. 
\begin{figure}
\includegraphics[width=150mm,height=80mm]{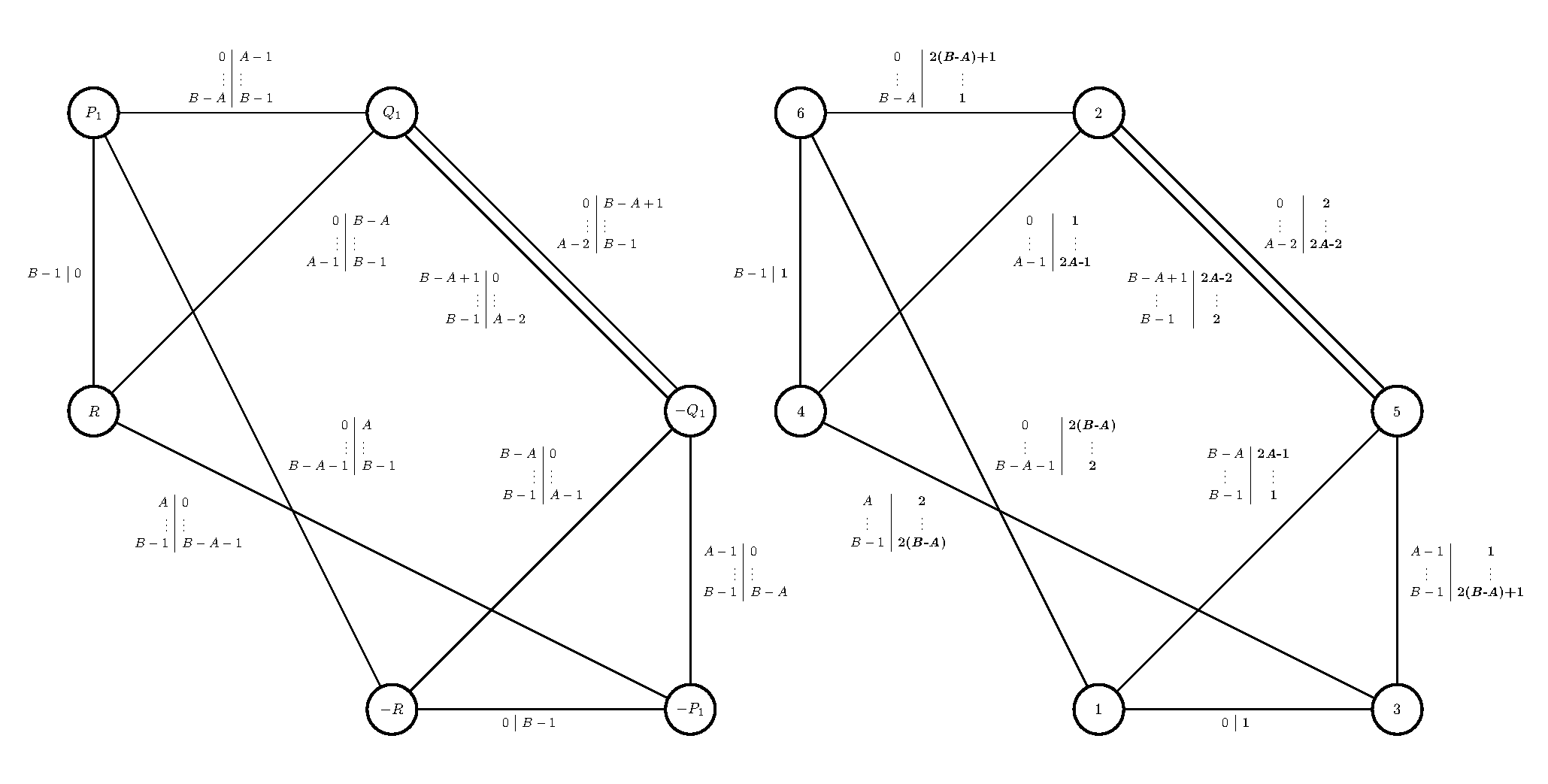}
\caption{A strongly connected automaton for the boundary (left) and its ordered extension $G^o$ (right).}
\label{CNS_DTautomaton}\label{primsubautomaton}

\end{figure}
\begin{proposition}\label{prop:primsubautomaton} Let $\T$ be quadratic a CNS-tile and $\mathbf{D}$ the incidence matrix of $G$. Then $\mathbf{D}$ is irreducible. Moreover, there exists a unique family of non-empty compact sets $(K_s)_{s\in \mathcal{R}}$ such that 
$$\left\{\begin{array}{rcl}
\partial \T&=&\bigcup_{s\in \mathcal{R}} K_s,\\\\
K_s&=&\bigcup_{s \xrightarrow{a} s'\in G}M^{-1}(K_{s'}+a).
\end{array}\right.
$$
\end{proposition}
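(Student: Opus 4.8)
\textbf{Proof plan for Proposition~\ref{prop:primsubautomaton}.}

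The plan is to prove the three assertions — irreducibility of $\mathbf{D}$, the boundary decomposition $\partial\T=\bigcup_{s\in\mathcal{R}}K_s$, and the set equations for the $K_s$ — by combining the classical description of the boundary of a self-affine $\Z^2$-tile in terms of its neighbors with a uniqueness argument for GIFS attractors. First I would recall the standard fact (see e.g.\ the tiling property stated in Section~\ref{sec:reduccns}) that for a self-affine $\Z^2$-tile one has $\partial\T=\bigcup_{s\in\mathcal{S}}(\T\cap(\T+s))$, where $\mathcal{S}$ is the set of neighbors from~\eqref{eq:neighset}. Setting $B_s:=\T\cap(\T+s)$, the self-affine equation~\eqref{SATile} applied to both $\T$ and $\T+s$ yields, after intersecting and using $\lambda_2$-non-overlap to discard measure-zero artefacts, the subdivision relation $B_s=\bigcup_{Ms+a'=s'+a}M^{-1}(B_{s'}+a)$, the union being over $a,a'\in\Di$ and $s'\in\mathcal{S}$ with $Ms+a'-s'-a=0$; this is exactly the edge rule $s\xrightarrow{a|a'}s'$. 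The point requiring care here is that one must check that every neighbor $s'$ appearing on the right-hand side is again a genuine neighbor (so that the recursion stays inside $\mathcal{S}$), and that $B_{s'}\neq\emptyset$ — both follow from the fact that $M^{-1}(B_{s'}+a)\subset B_s$ is non-empty only if $B_{s'}\neq\emptyset$, together with the explicit neighbor list~\eqref{eq:neighset}.

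Next I would restrict attention to the sub-collection $\mathcal{R}=\{\pm P_1,\pm Q_1,\pm R\}$. For a quadratic CNS tile the parameters satisfy $-1\le A\le B$, and one computes from~\eqref{J} that either $J=1$ (so $\mathcal{R}=\mathcal{S}$ and there is nothing to restrict) or $J>1$, i.e.\ $2A-B\ge3$. In the latter case the content of the restriction is that the ``inner'' neighbors $\pm P_2,\ldots,\pm P_J,\pm Q_2,\ldots,\pm Q_J$ do not contribute new pieces to $\partial\T$: one shows $B_{P_n}\subset B_{P_1}\cup B_{Q_1}\cup B_{R}$ (and symmetrically), or more precisely that the GIFS restricted to the six states of $\mathcal{R}$ still has the union of its solutions equal to $\partial\T$. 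The clean way to do this is to verify directly from the edge rule $Ms+a'=s'+a$ that $G$, viewed on vertex set $\mathcal{R}$, has the property that for every $s\in\mathcal{R}$ and every $a\in\Di$ there is exactly one outgoing edge $s\xrightarrow{a}s'$ with $s'\in\mathcal{R}$ — equivalently, $Ms+\Di\subset \mathcal{R}+\Di$ for each $s\in\mathcal{R}$. This is a finite check over the six vectors $\pm P_1,\pm Q_1,\pm R$ using~\eqref{CollClass}; it is essentially the computation already encoded in Figure~\ref{primsubautomaton}, and it simultaneously shows that $\mathbf{D}$ has constant row sums $B$ and that $G$ is well-defined on $\mathcal{R}$.

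With $G$ thus established as a genuine GIFS on $\mathcal{R}$ whose maps $x\mapsto M^{-1}(x+a)$ are contractions (as $M$ is expanding), the Mauldin–Williams theorem~\cite{MauldinWilliams88} gives a unique family of non-empty compact sets $(K_s)_{s\in\mathcal{R}}$ satisfying $K_s=\bigcup_{s\xrightarrow{a}s'\in G}M^{-1}(K_{s'}+a)$; uniqueness here also needs irreducibility of $\mathbf{D}$, which I would get from the explicit strong connectedness of the graph on the left of Figure~\ref{primsubautomaton} (a finite inspection: exhibit a directed path between each ordered pair of the six states). It then remains to identify $\bigcup_{s\in\mathcal{R}}K_s$ with $\partial\T$. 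Since the sets $B_s=\T\cap(\T+s)$, $s\in\mathcal{R}$, are non-empty, compact, and — by the subdivision relation together with the finite check that all target states lie in $\mathcal{R}$ — satisfy the same system of set equations, uniqueness of the GIFS attractor forces $K_s=B_s$ for all $s\in\mathcal{R}$. Consequently $\bigcup_{s\in\mathcal{R}}K_s=\bigcup_{s\in\mathcal{R}}B_s$, and this equals $\partial\T$ because the pieces $B_{s}$ for $s\in\mathcal{S}\setminus\mathcal{R}$ are absorbed into $\bigcup_{s\in\mathcal{R}}B_s$ by the inclusion established in the restriction step. I expect the main obstacle to be precisely this restriction/absorption step — verifying carefully that the six states of $\mathcal{R}$ already capture the whole boundary and that the induced graph is closed under the edge relation — since the remaining arguments (contractivity, Mauldin–Williams uniqueness, strong connectedness by inspection) are routine.
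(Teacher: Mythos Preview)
The paper does not actually prove this proposition: it is stated as background, and Remark~\ref{rem:AeqB} attributes the computation of the contact graph $G$ to~\cite{AkiyamaThuswaldner05}. So your plan is being compared not to a proof in the paper but to the standard contact-graph machinery that the paper imports.

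Your outline is correct when $J=1$, since then $\mathcal{R}=\mathcal{S}$ and the neighbor graph and contact graph coincide. The gap is precisely in the restriction step for $J>1$ (which is the case the paper actually needs, $2A-B\in\{3,4\}$). Two of your claims there fail. First, the assertion that for every $s\in\mathcal{R}$ and every $a\in\Di$ there is exactly one outgoing edge in $G$, giving constant row sums $B$, is false: for instance from $s=P_1$ one computes $MP_1=(0,1)^T$, so targets $s'$ must have second coordinate $1$; within $\mathcal{R}$ these are $Q_1=(A-1,1)$ and $-R=(A,1)$, and counting the admissible pairs $(a,a')$ gives $2(B-A)+1$ outgoing edges, not $B$. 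Second --- and this is the real obstruction --- the full neighbor graph on $\mathcal{S}$ does have edges leaving $\mathcal{R}$: the same computation shows $P_1\xrightarrow{a|a'}-P_2$ whenever $a'-a=A-2$, with $-P_2=(A-2,1)\in\mathcal{S}\setminus\mathcal{R}$. Hence the family $(B_s)_{s\in\mathcal{R}}$, $B_s=\T\cap(\T+s)$, satisfies a GIFS with \emph{more} edges than $G$, and you cannot invoke Mauldin--Williams uniqueness to conclude $K_s=B_s$. You correctly flagged this step as the main obstacle, but the proposed resolution --- a finite check that $G$ is closed under the edge relation --- simply comes out negative.

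What the cited literature does instead is build $\mathcal{R}$ and $G$ from a geometric starting set (a polygonal approximation $\T_0$ whose $\Z^2$-translates tile), so that the pieces $K_s$ arise as Hausdorff limits of polygonal boundary arcs rather than as intersections $\T\cap(\T+s)$; the equality $\bigcup_{s\in\mathcal{R}}K_s=\partial\T$ then follows from $\partial\T_n\to\partial\T$. An alternative route is to prove the absorption $B_{s'}\subset\bigcup_{s\in\mathcal{R}}B_s$ for $s'\in\mathcal{S}\setminus\mathcal{R}$ directly, but this requires an inductive argument on the neighbor graph rather than a one-line closure check.
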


\begin{remark}\label{rem:AeqB}
$G$ is the so-called \emph{contact graph}. It was computed in~\cite{AkiyamaThuswaldner05}. In the figure, we assume $B\geq A+1$. For $A=B$, the contact graph is obtained from the same figure by deleting the edges between between $P_1$ and $-R$ and between $-P_1$ and $R$. 
\end{remark}

$G$ being strongly connected, there is a strictly positive left eigenvector $(u_s)_{s\in\mathcal{R}}$ satisfying $\sum_{s\in\mathcal{R}}u_s=1$ associated to the  Perron Frobenius eigenvalue $\beta$ of its incidence matrix. The parametrization $C:[0,1]\to\partial \T$ is obtained by subdividing the interval $[0,1]$ proportionally to the graph $G$: in particular, a subinterval of length $u_s$ will be mapped to $K_s$ for each $s\in\mathcal{R}$. To this effect, we order the boundary pieces $K_s$ around the boundary, as well as the subpieces $\xi_{-1}(K_{s'}+\Phi(a))$ constituting $K_s$. This results in an ordered extension $G^o$, depicted on the right side of Figure~\ref{primsubautomaton}: the states are ordered from $1$ to $6$, and for each corresponding state $s^{(i)}\in\mathcal{R}$ $(i\in\{1,\ldots,6\})$, all the edges starting from $s^{(i)}$ are ordered from ${\bf 1}$ to ${\bf o_m}$. Here, ${\bf o_m}$ is the total number of edges starting from $s^{(i)}$, without reference to $i$ for sake of simplicity. In this way, the mapping 
$$\begin{array}{ccc}
G^o&\to& G\\
i\xrightarrow{a|{\bf o}}j=:(i;{\bf o})&\mapsto& s^{(i)}\xrightarrow{a}s^{(j)}
\end{array}
$$
is a bijection.  We extend this mapping to the walks of arbitrary length in $G^o$ (possibly infinite walks):
$$\begin{array}{ccc}
G^o&\to& G\\
(i;{\bf o_1},{\bf o_2},\ldots)&\mapsto& s^{(i)}\xrightarrow{a_1}s^{(j_1)}\xrightarrow{a_2}s^{(j_2)}\ldots,
\end{array}
$$
whenever $ i\xrightarrow{a_1|{\bf o_1}}j_1\xrightarrow{a_2|{\bf o_2}}j_2\ldots\in G^o$. Finally, we define the natural onto mapping 
$$\begin{array}{ccc}
\psi:G^o&\to& \partial \T\\
v&\mapsto& \sum_{n\geq 1}M^{-n}a_n,
\end{array}
$$
whenever $v:i\xrightarrow{a_1|{\bf o_1}}j_1\xrightarrow{a_2|{\bf o_2}}j_2\ldots$ is an infinite walk in $G^o$. The automaton $G^o$ induces a $\beta$-number system  $\phi^{(1)}:[0,1]\to G^o$ of Dumont-Thomas type~\cite{DumontThomas89}. In~\cite{AkiyamaLoridant11}, we proved the following result by taking $C:=\psi\circ \phi^{(1)}$.
\begin{theorem}
\label{th:boundparam}
Let $\beta$ be the spectral radius of $\mathbf{D}$. Then there exists $C:[0,1]\to\partial \T$ H\"older continuous onto mapping and a hexagon $Q\subset\mathbb{R}^2$ with the following properties. Let $\T_0:=Q$ and 
$$M\T_{n}=\bigcup_{a\in \Di} (\T_{n-1}+a).
$$
be the sequence of approximations of $\T$ associated to $Q$. Then :
\begin{itemize}
\item[$(1)$]$\lim_{n\to\infty}\partial \T_n=\partial \T$ (Hausdorff metric).
\item[$(2)$]For all $n\in\mathbb{N}$, $\partial \T_n$ is a polygonal simple closed curve.
\item[$(3)$]Denote by $V_n$ the set of vertices of $\partial \T_n$. For all $n\in\mathbb{N}$, 
$V_n\subset V_{n+1}\subset C(\mathbb{Q}(\beta)\cap [0,1])$ (\emph{i.e.}, the vertices have $\mathbb{Q}(\beta)$-addresses in the parametrization).
\end{itemize}
\end{theorem}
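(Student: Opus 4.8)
The statement to prove is Theorem~\ref{th:boundparam}, which asserts the existence of a H\"older continuous onto parametrization $C\colon[0,1]\to\partial\T$ together with a hexagonal approximation scheme whose boundaries converge to $\partial\T$ and remain simple closed polygonal curves with a nested vertex structure. Since the general machinery is already developed in~\cite{AkiyamaLoridant11}, the plan is to \emph{verify that the quadratic CNS tiles of our class satisfy the hypotheses of that general construction}, and then invoke it. Concretely, by Proposition~\ref{prop:primsubautomaton} the boundary $\partial\T$ is the attractor of the GIFS given by the strongly connected graph $G$ with irreducible incidence matrix $\mathbf D$; the abstract theorem of~\cite{AkiyamaLoridant11} produces $C=\psi\circ\phi^{(1)}$ as soon as one exhibits an \emph{ordered} extension $G^o$ of $G$ that is consistent around the boundary (i.e.\ the cyclic order of the neighbor pieces $K_s$ and of their subpieces is respected by the edges) and an initial hexagon $Q$ whose $n$-th approximant boundaries $\partial\T_n$ are simple closed curves.

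First I would fix the counterclockwise cyclic order of the six neighbor pieces $K_{P_1},K_{Q_1},K_{-R},K_{-P_1},K_{-Q_1},K_R$ around $\partial\T$ (using the known geometry of the contact graph from~\cite{AkiyamaThuswaldner05} and the central symmetry $\T=-\T+\text{const}$), label the states $1,\dots,6$ accordingly, and then read off from the edges $s\xrightarrow{a|a'}s'$ of $G$ the induced order on the subpieces $M^{-1}(K_{s'}+a)$ inside each $K_s$; checking that this is globally consistent (the last subpiece of $K_{s^{(i)}}$ meets the first subpiece of $K_{s^{(i+1)}}$ at the shared vertex) gives the ordered automaton $G^o$ on the right of Figure~\ref{primsubautomaton}. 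This consistency check, done once and for all with the explicit edge list, is exactly what licenses the Dumont--Thomas $\beta$-numeration $\phi^{(1)}\colon[0,1]\to G^o$ and hence the map $C$; H\"older continuity of $C$ and surjectivity onto $\partial\T$ are then automatic from the general theorem, with H\"older exponent governed by $\log\beta/\log|\lambda|$ where $\lambda$ is an eigenvalue of $M$.

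Next I would produce the hexagon $Q$: take $Q$ to be the convex hull of the six ``corner'' points of $\partial\T$, i.e.\ the points with eventually periodic addresses sitting at the junctions $K_{s^{(i)}}\cap K_{s^{(i+1)}}$ (each such point has a $\mathbb Q(\beta)$-address by construction). Setting $\T_0:=Q$ and iterating $M\T_n=\bigcup_{a\in\Di}(\T_{n-1}+a)$, one checks that $\partial\T_0$ is a simple hexagon and that the subdivision step replaces each edge of $\partial\T_n$ by a polygonal arc without creating self-intersections — this uses the OSC (the translates $\T_{n-1}+a$ have disjoint interiors, inherited from $\T$) together with the fact that $Q$ already contains the six corner points so the approximants ``hug'' the boundary correctly. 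Property~(1), $\partial\T_n\to\partial\T$ in Hausdorff metric, follows from contractivity of $M^{-1}$ and $\partial\T_n\supset V_n$ with $V_n$ dense in $\partial\T$ as $n\to\infty$; property~(2) is the just-verified simplicity; property~(3), $V_n\subset V_{n+1}\subset C(\mathbb Q(\beta)\cap[0,1])$, holds because each vertex of $\partial\T_n$ is an $M^{-1}$-image-plus-digit of a vertex of $\partial\T_{n-1}$ or an original corner of $Q$, so inductively all vertices have eventually periodic (hence $\mathbb Q(\beta)$) addresses, and $V_n\subset V_{n+1}$ because the subdivision only adds vertices.

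The main obstacle — and the only genuinely non-routine part — is establishing that $\partial\T_n$ stays a \emph{simple} closed curve for every $n$ and, in tandem, that the ordering on $G^o$ is globally consistent: one must rule out that two subpieces $M^{-1}(K_{s'}+a)$ and $M^{-1}(K_{s''}+b)$ of the same $K_s$ overlap in more than a single point in the wrong cyclic position, or that the polygonal approximant folds back on itself at a digit transition. In our setting $B\ge A+1$ (resp.\ the $A=B$ modification from Remark~\ref{rem:AeqB}), this amounts to a finite but somewhat delicate case analysis on the edge list of $G$ together with the neighbor set~\eqref{eq:neighset}: one verifies that for each state the outgoing edges, read in the chosen order, trace a non-backtracking path through the hexagon. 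Once this combinatorial verification is complete, all three listed properties follow directly from the general results of~\cite{AkiyamaLoridant11}, and the theorem is proved.
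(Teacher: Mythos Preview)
The paper does not actually prove Theorem~\ref{th:boundparam}; it merely recalls it, stating explicitly that ``In~\cite{AkiyamaLoridant11}, we proved the following result by taking $C:=\psi\circ\phi^{(1)}$.'' The boundary parametrization of quadratic CNS tiles was carried out as a worked example in that earlier paper, so the present paper simply imports the statement. Your proposal therefore goes well beyond what the paper does: you sketch the verification that the general machinery of~\cite{AkiyamaLoridant11} applies, which is essentially what that reference contains.

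Your outline is broadly correct and aligned with the approach of~\cite{AkiyamaLoridant11}: check strong connectivity of $G$, build the ordered extension $G^o$, define $C=\psi\circ\phi^{(1)}$ via Dumont--Thomas numeration, and take the hexagon $Q$ with vertices at the six corner points where consecutive pieces $K_{s^{(i)}}$ meet. Two small corrections. First, $Q$ is not the \emph{convex hull} of the six corner points but the (possibly non-convex) hexagon obtained by joining $\psi(1;\overline{\mathbf 1}),\dots,\psi(6;\overline{\mathbf 1})$ in cyclic order by straight segments, as Remark~\ref{rem:Approx} makes explicit; for the non-disk-like parameters the hexagon is visibly non-convex in the figures. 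Second, your justification of $V_n\subset V_{n+1}$ (``the subdivision only adds vertices'') and of simplicity of $\partial\T_n$ is where the real content lies, and you correctly flag this as the delicate part; in~\cite{AkiyamaLoridant11} this is handled by the compatibility conditions built into the ordered automaton $G^o$ rather than by an ad hoc case analysis on the polygonal approximants, so the induction is cleaner than your sketch suggests.
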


\begin{remark} \label{rem:Approx}The polygonal approximations $\partial\T_n$ appear in a natural way together with the parametrization. $\partial\T_0$ is obtained by joining by straight line segments the points 
$$\psi(1;\overline{\mathbf{1}}),\psi(2;\overline{\mathbf{1}}),\ldots,\psi(6;\overline{\mathbf{1}}),\psi(1;\overline{\mathbf{1}})
$$
in this order. Here, we denoted by $\overline{\mathbf{o}}$  the infinite repetition of $\mathbf{o},\mathbf{o},\ldots$. In general,  let $w_1^{(n)},\ldots, w_{m_n}^{(n)}$ be the walks of length $n$ in the automaton $G^o$, written in the lexicographical order, from $(1;\underbrace{{\bf 1},\ldots,{\bf 1}}_{n\textrm{ times}})$ to $(6;\underbrace{\mathbf{o_{m}},\ldots,\mathbf{o_{m}}}_{n\textrm{ times}})$. Let us denote by $(i;{\bf o_1},\ldots,{\bf o_n})\&\overline{{\bf 1}}$ the concatenated walk $(i;{\bf o_1},\ldots,{\bf o_n},\overline{{\bf 1}})$. Then $\partial\T_n$ is obtained by joining by straight line segments the points
$$\psi(w_1^{(n)}\&\overline{\mathbf{1}}),\ldots,\psi(w_{m_n}^{(n)}\&\overline{\mathbf{1}}),\psi(w_1^{(n)}\&\overline{\mathbf{1}})
$$
in this order. Each vertex of $\partial\T_n$ corresponds  to an infinite walk ending up in a cycle of $G$. Thus these are images of fixed points of contractions:
$$(f_{a_1}\circ\ldots\circ f_{a_l})\left(\textrm{Fix}(f_{a_{l+1}}\circ\ldots \circ f_{a_{l+n}})\right),
$$ 
where $f_a(x):=M^{-1}(x+a)$ for each $a\in \mathcal{D}$.
\end{remark}

The first terms of the approximation sequence are depicted in Figures~\ref{KnuthApprox} to~\ref{ABApprox} for some examples. 
\begin{figure}
\begin{center}
\begin{tabular}{lllll}
\includegraphics[width=30mm,height=20mm]{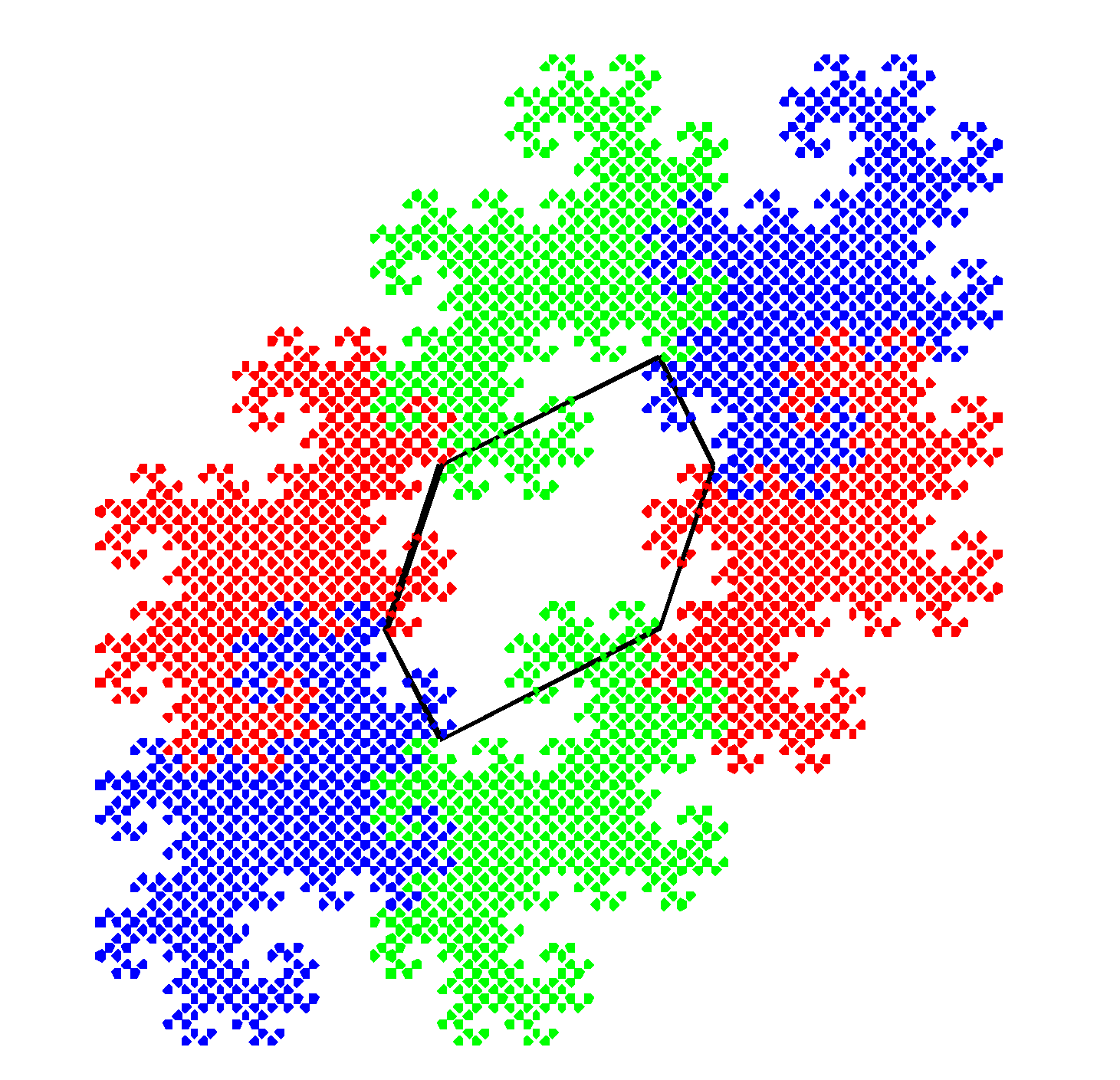}
&
\includegraphics[width=30mm,height=20mm]{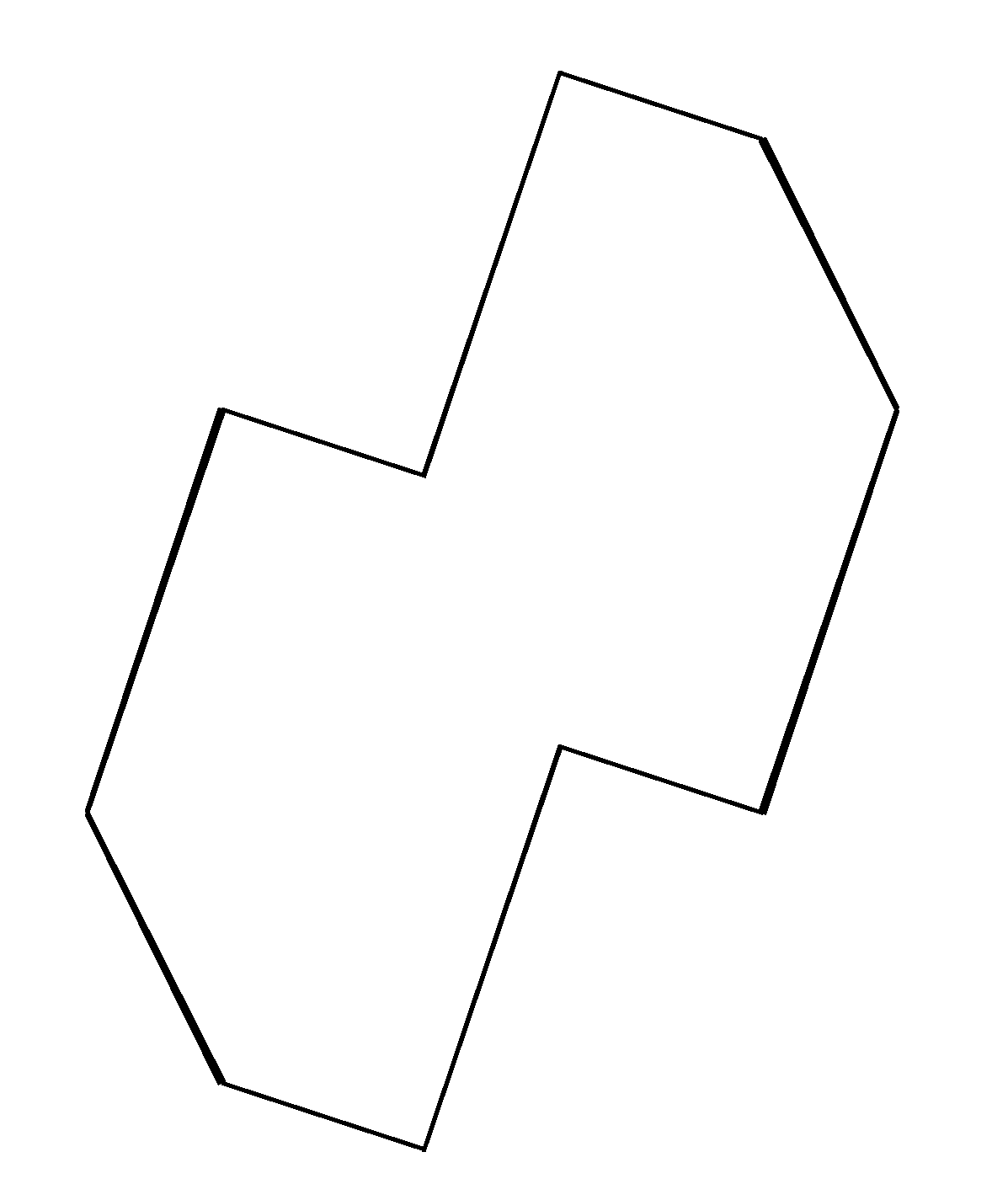}
&
\includegraphics[width=30mm,height=20mm]{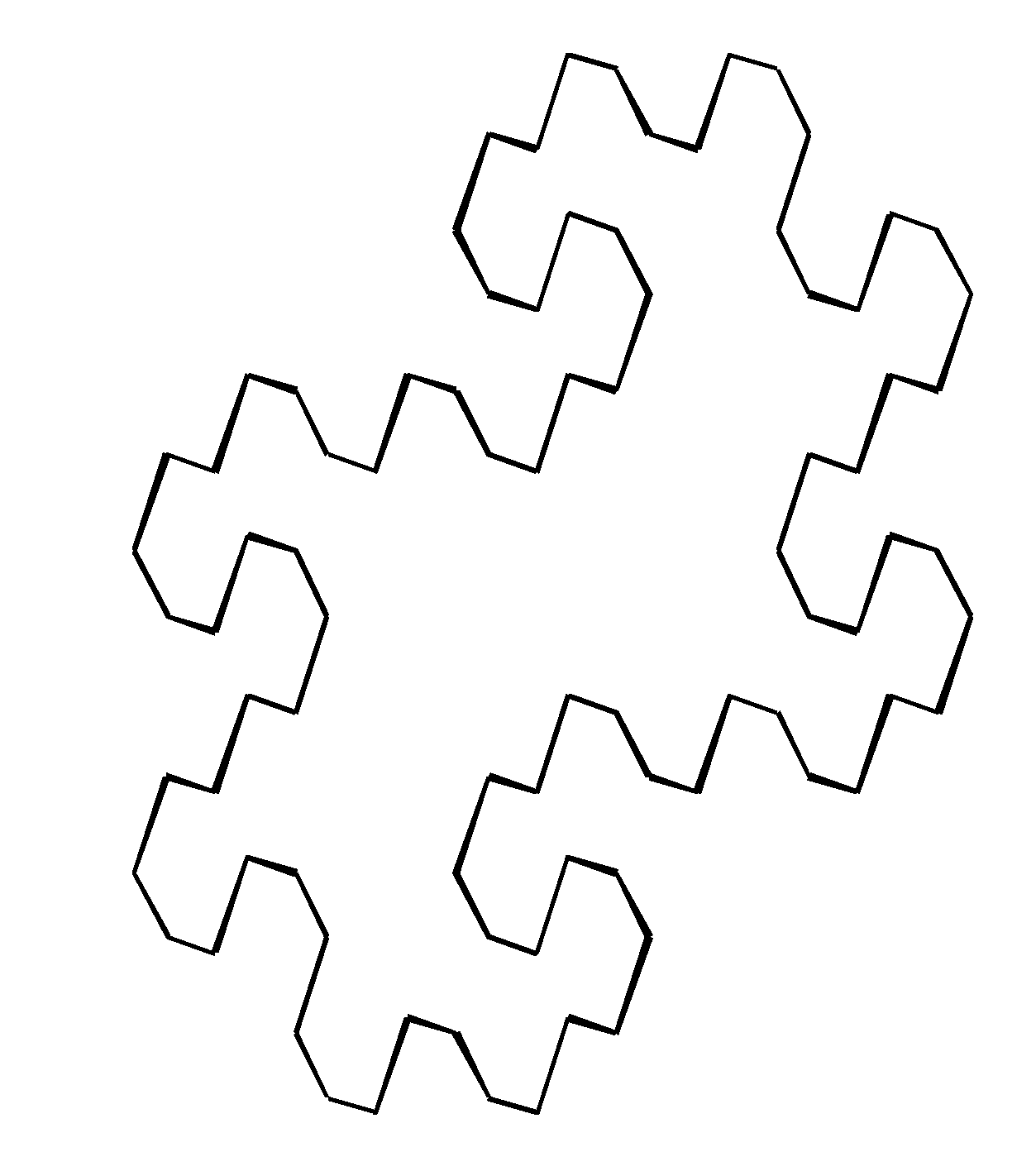}
&
\includegraphics[width=30mm,height=20mm]{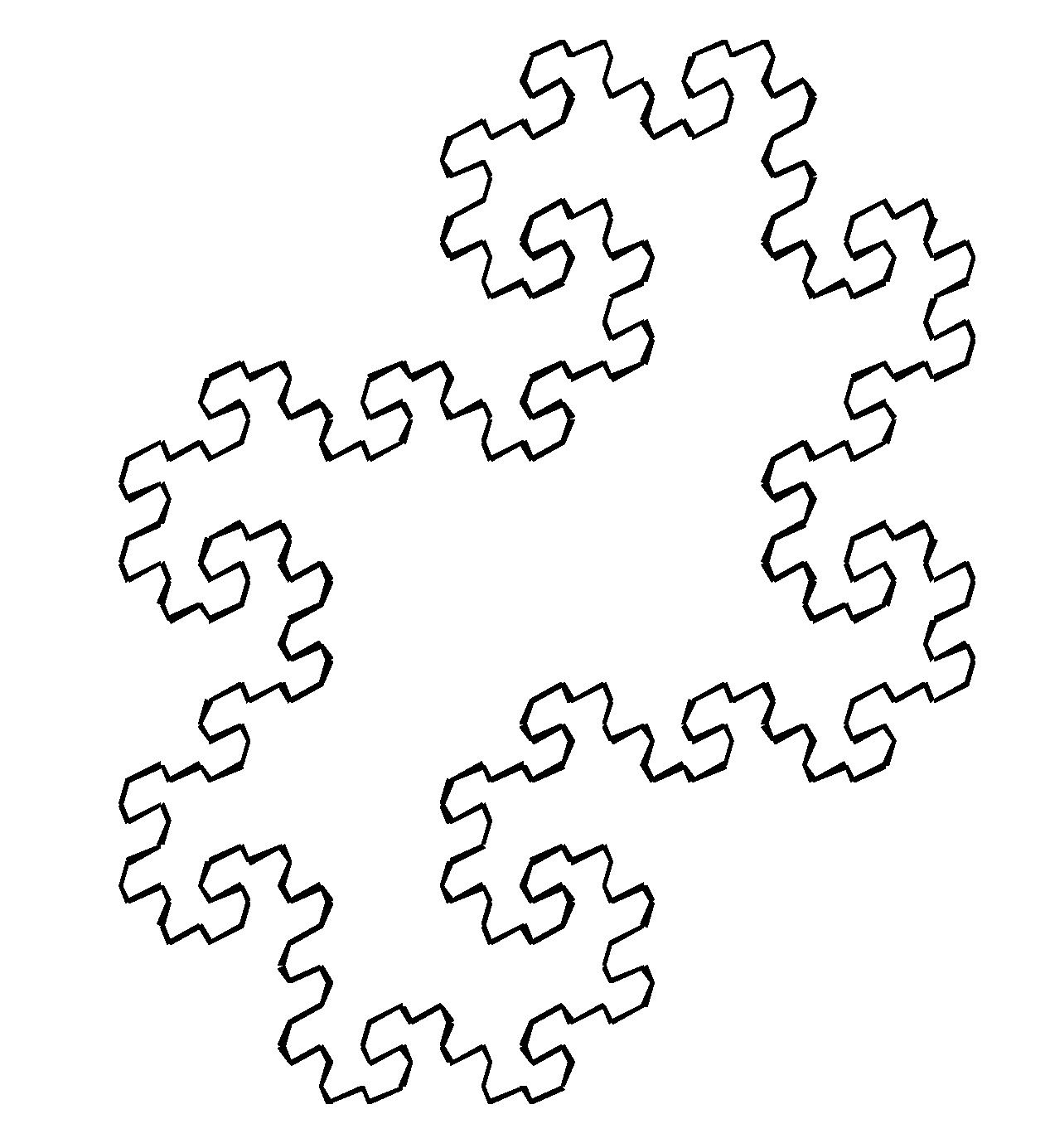}
&
\includegraphics[width=30mm,height=20mm]{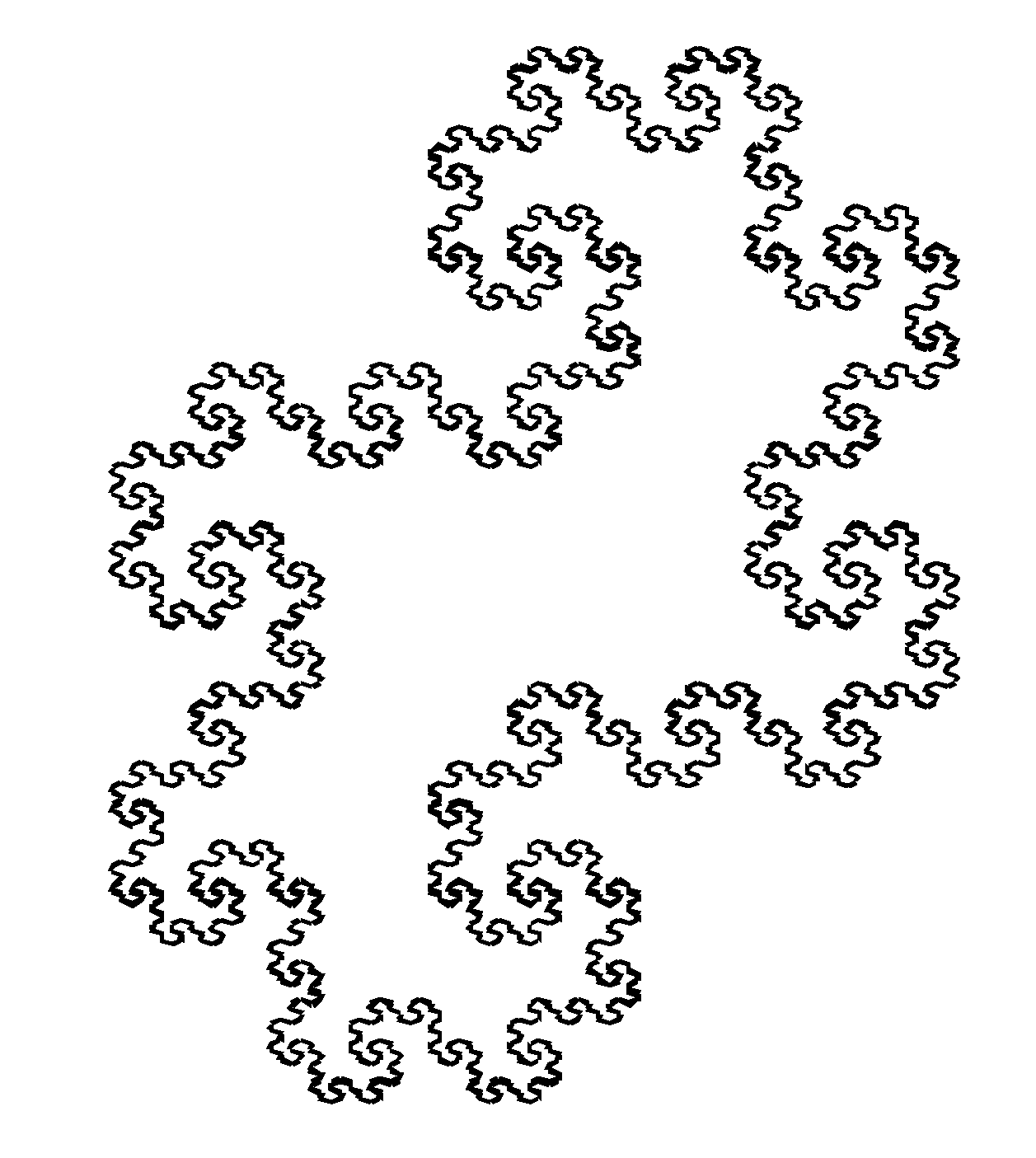}
\end{tabular}
\end{center}
\caption{Knuth dragon ($A=B=2)$ : tiling with $\partial\T_0$; $\partial\T_1$,$\partial\T_5,\partial\T_8,\partial\T_{11}$}\label{KnuthApprox}
\end{figure}

\begin{figure}
\begin{center}
\begin{tabular}{ccccc}
\includegraphics[width=30mm,height=20mm]{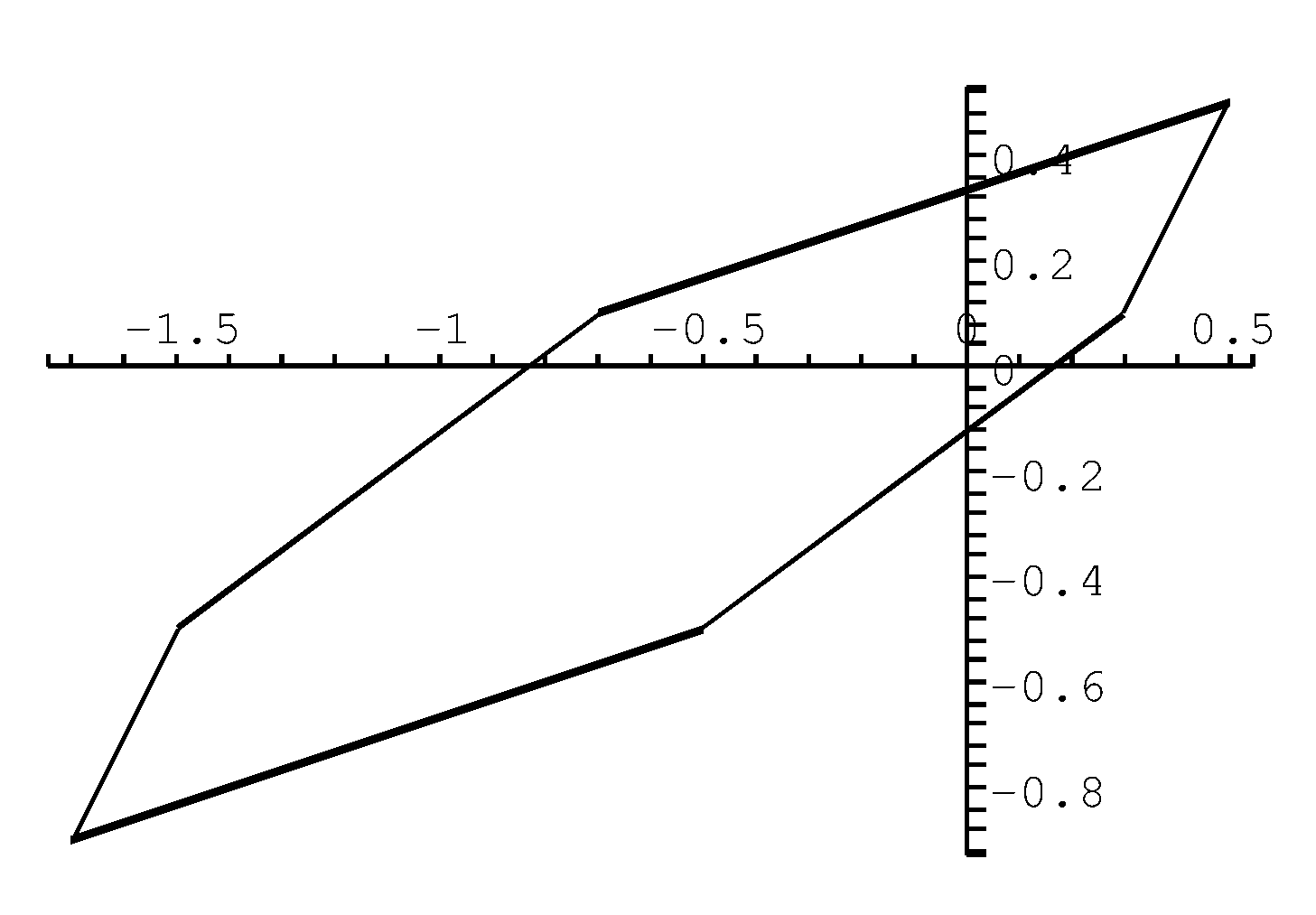}
&
\includegraphics[width=30mm,height=20mm]{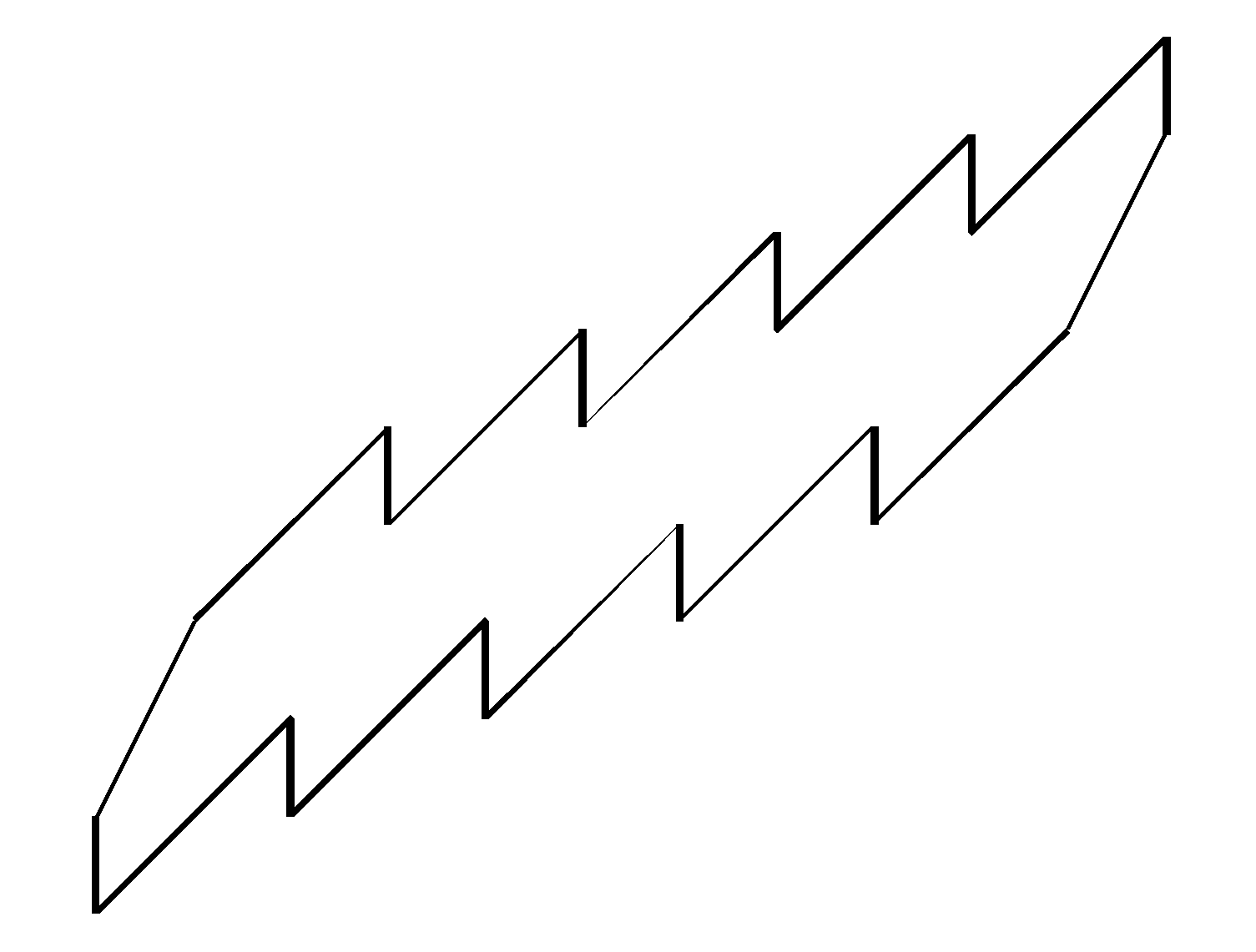}
&
\includegraphics[width=30mm,height=20mm]{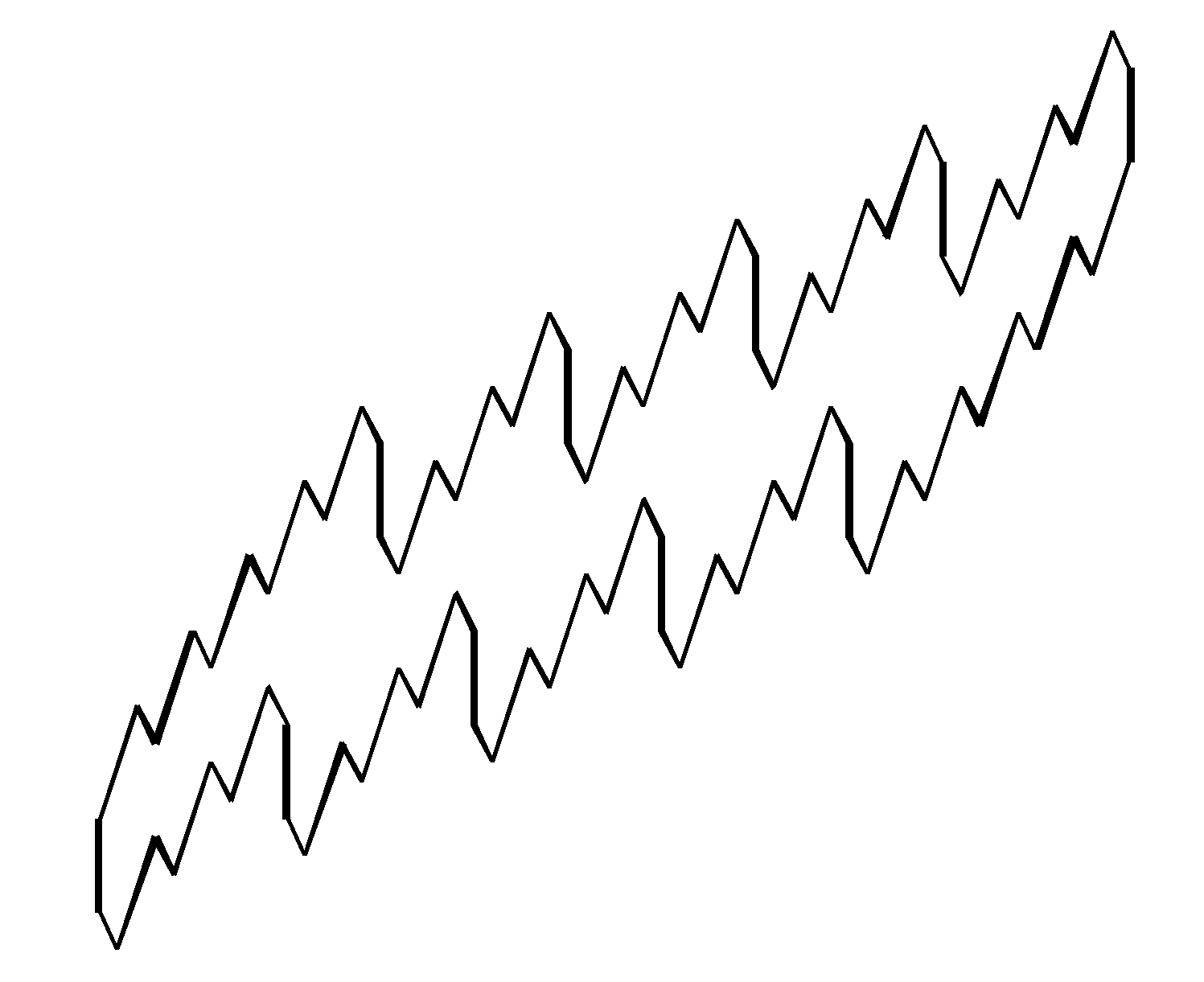}
&
\includegraphics[width=30mm,height=20mm]{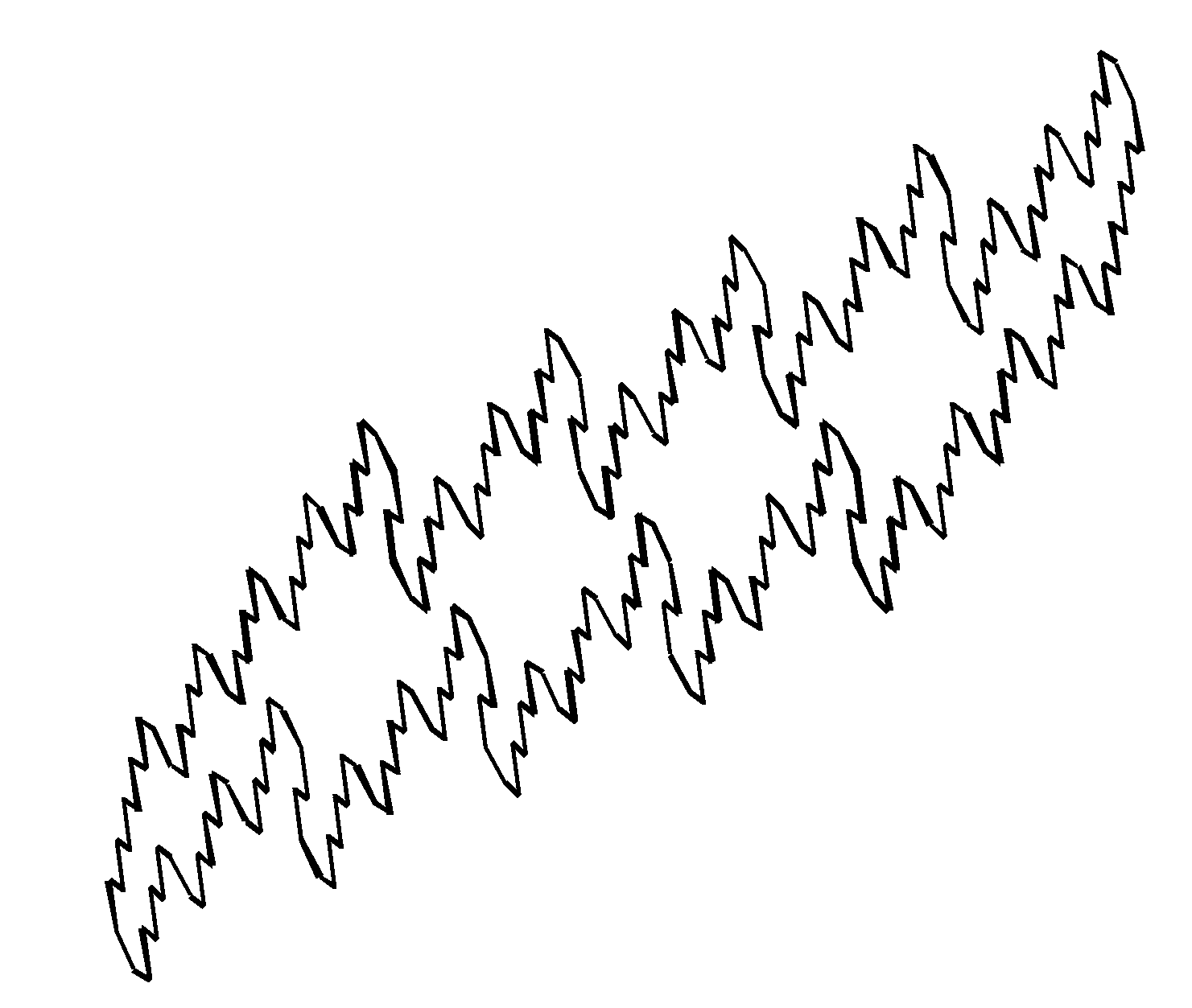}
&
\includegraphics[width=30mm,height=20mm]{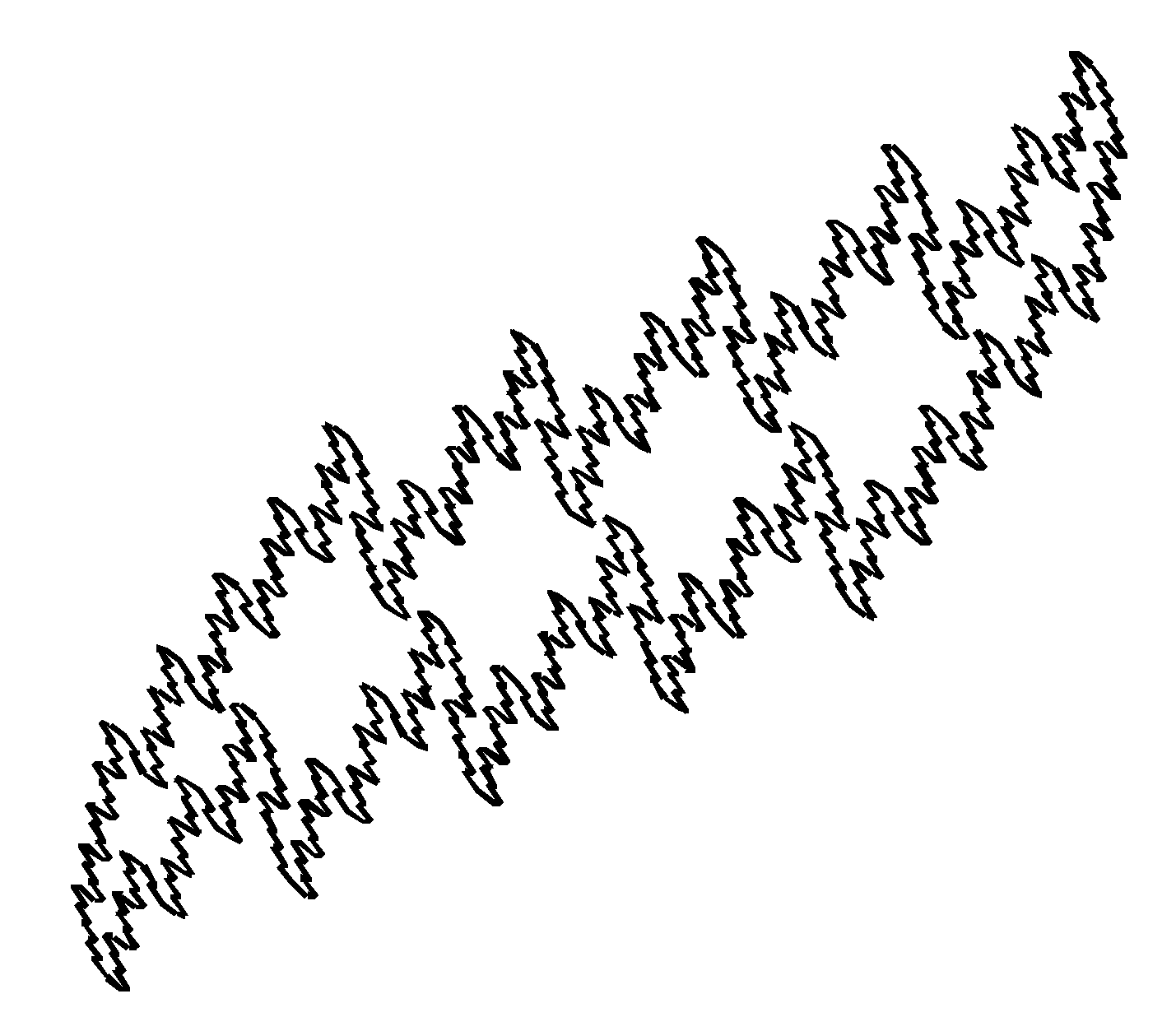}
\end{tabular}
\end{center}
\caption{$A=4,B=5$ : $\partial\T_0,\ldots,\partial\T_4$}\label{A4B5Approx}
\end{figure}

\begin{figure}
\begin{center}
\begin{tabular}{cccc}
\includegraphics[width=35mm,height=23mm]{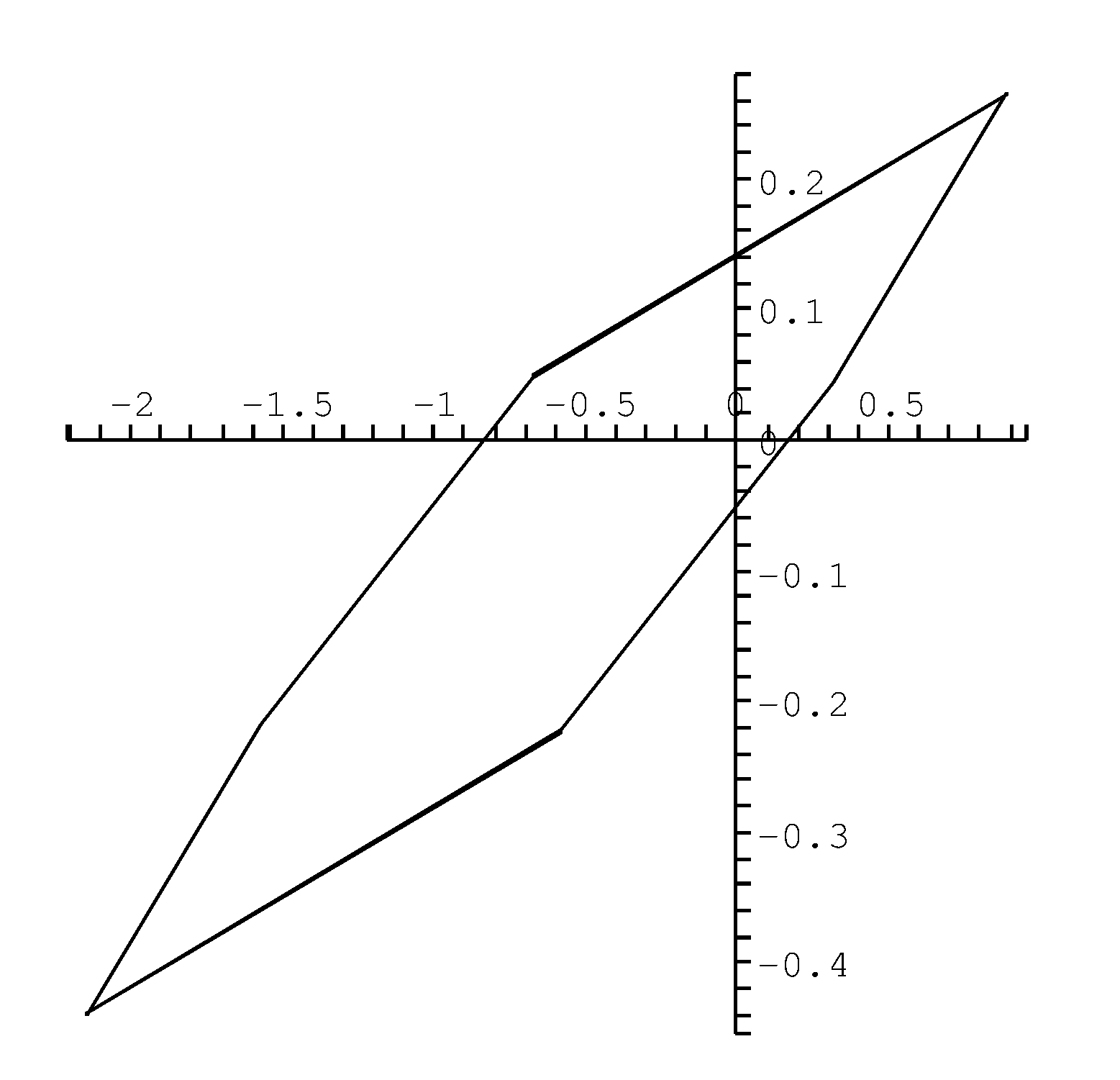}
&
\includegraphics[width=35mm,height=23mm]{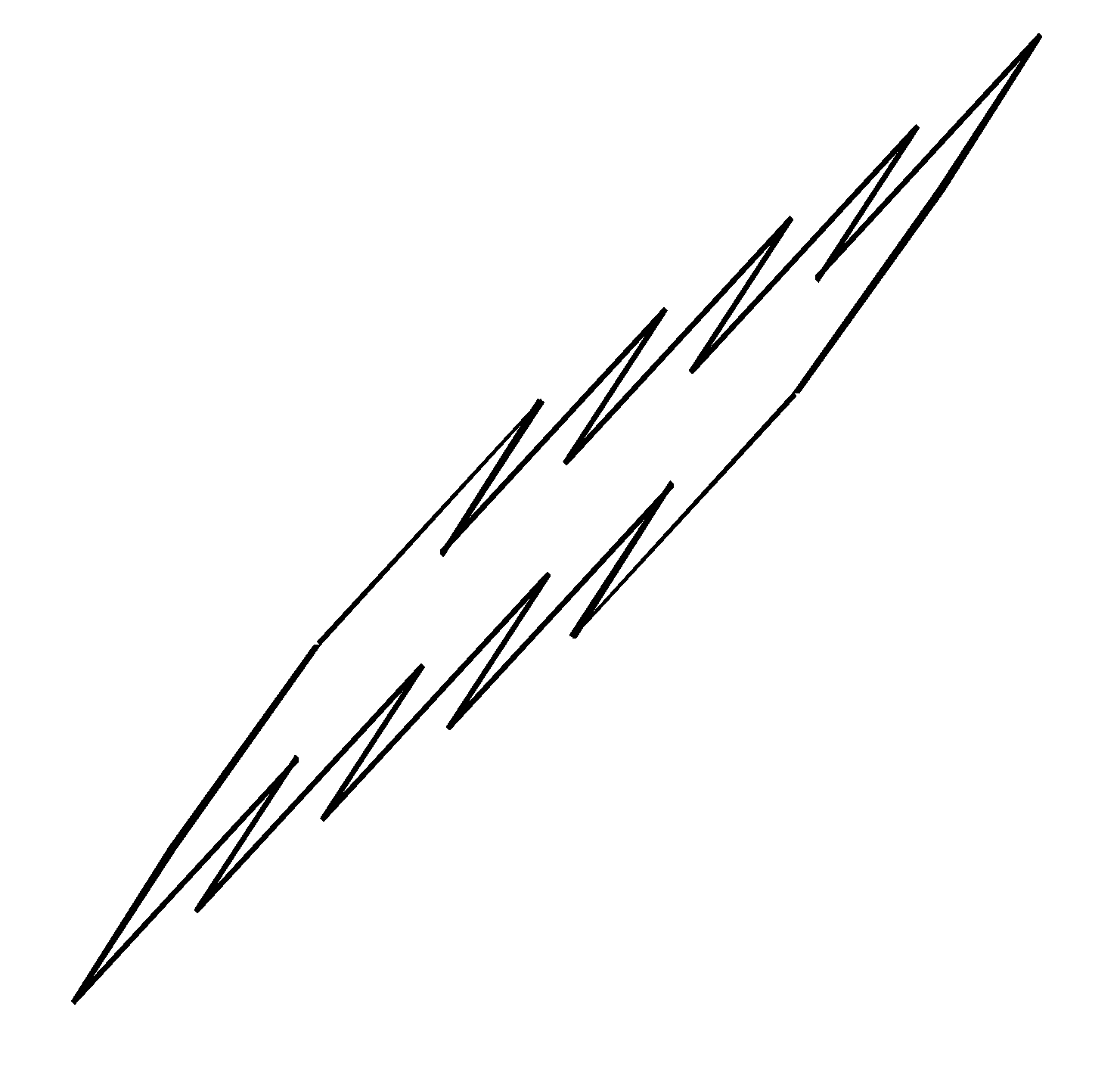}
&
\includegraphics[width=35mm,height=23mm]{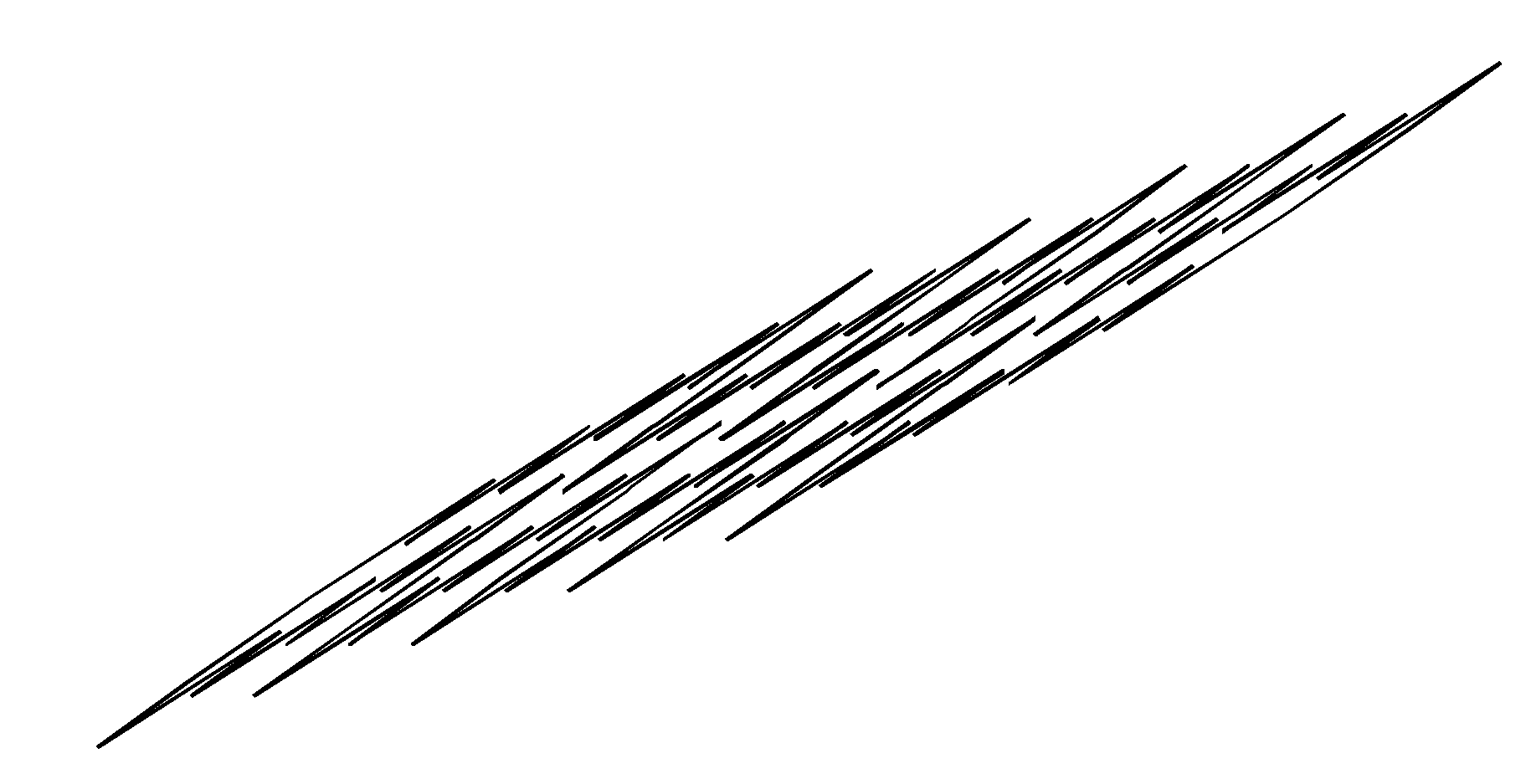}
&
\includegraphics[width=35mm,height=23mm]{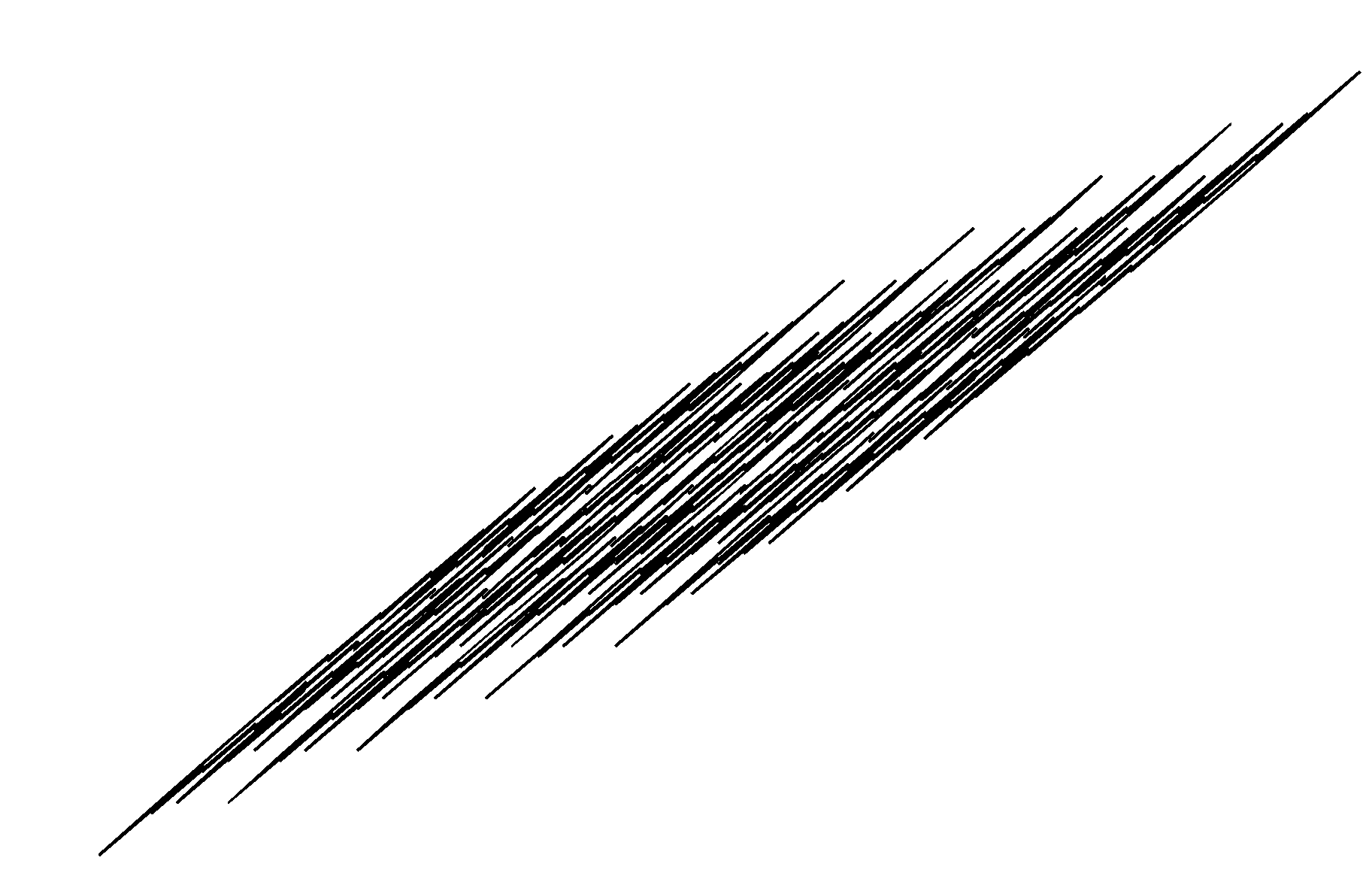}
\end{tabular}
\end{center}
\caption{$A=5,B=5$ : $\partial\T_0,\ldots,\partial\T_3$}\label{ABApprox}
\end{figure}

For the construction of the curve $Q\subset\partial\T$ meeting the assumptions of Theorem~\ref{th:nt04}, we will make use of neighbor relations of the natural subdivisions of $\T$. This will require the knowledge of the whole neighbor set of $\T$ itself. Note that, for $2A-B\in\{3,4\}$, the quantity $J$ of Equation~(\ref{J}) is equal to $2$ (or $J=3$ in the case $A=B=4$, but then $x^2+4x+4=(x+2)^2$ and $\T$ is a square). Therefore, the neighbor set (see Equation~(\ref{eq:neighset})) reduces to 
\begin{equation}\label{eq:neighset34}
\mathcal{S}=\left\{\pm\left(\begin{array}{c}1\\0\end{array}\right),
\pm\left(\begin{array}{c}A\\1\end{array}\right),
\pm\left(\begin{array}{c}A-1\\1\end{array}\right),
\pm\left(\begin{array}{c}A-2\\1\end{array}\right),
\pm\left(\begin{array}{c}2A-2\\2\end{array}\right)
\right\}.
\end{equation}

\end{section}

\begin{section}{$\T$ has no cut point for $2A-B=3$}\label{sec:thnocp3} 
We assume throughout this section that $2A-B=3$ and $A\ne B$. The case $A=B=3$ can be treated in a similar way, with a simplified graph $G$ as mentioned in Remark~\ref{rem:AeqB}. The aim is to construct a curve $Q\subset \partial \T$ satisfying the assumptions of Theorem~\ref{th:nt04}. We will use the following terminology.
\begin{definition}
Let $M_1,M_2,\ldots ,M_{n},n\geq 3,$ be compact sets in
the plane. If $\#\left(M_i\cap M_{i+1}\right)=1$ for $1 \leq i
\leq n-1$ and $M_i\cap M_j=\emptyset $ for $\left| i-j\right| \geq
2$, we say that $M_1,M_2,\ldots, M_n$ form a {\em chain}. If
$\#\left(M_i\cap M_{i+1}\right)=1$ for $1 \leq i \leq n-1$,
$\#\left(M_{n}\cap M_1\right)=1$ and $M_i\cap M_j=\emptyset $ for
$2\leq \left| i-j\right| \leq n-2$, we say that $M_1,M_2,\ldots
,M_{n}$ form a {\em circular chain}. 
\end{definition}

\subsection{Construction of $Q$.}\label{sub:constrQ}
The curve $Q\subset\partial \T$ is built up from simple arcs according to the following strategy. 
\begin{itemize}
\item[$1$.] We define a chain of curves $\alpha_1,\alpha_2,\ldots, \alpha_B$, along the bottom of the boundary of $\T$, as images by the boundary parametrization $C$ of subintervalls of $[0,1]$. More precisely, for $i\in\{1,\ldots B\}$, let
$$\alpha_i:=C([s_i,t_i]),
$$
where $s_i,t_i\in[0,1]$ are the parameters associated to the infinite walks of $G^o$ given in Table~\ref{tab:alpha}. We check that $\alpha_1,\alpha_2,\ldots, \alpha_B$ form a chain in Section~\ref{sub:alphachain}.
\item[$2$.] We denote by $\alpha_i'$, $i\in\{1,\ldots B\}$,  the curve along the top of the boundary of $\T$, obtained by flipping the curve $\alpha_i$ through the exchange of digits 
$a\leftrightarrow B-1-a\;\;(a\in\{0,\ldots,B-1\}).
$ 
In other words,
$$\alpha_i':=\left\{\sum_{j=1}^\infty M^{-j}\left(\begin{array}{c}B-1-a_j\\0\end{array}\right);\sum_{j=1}^\infty M^{-j}\left(\begin{array}{c}a_j\\0\end{array}\right)\in\alpha_i\right\}.
$$
In particular, $\alpha_i'=0.\overline{B-1}-\alpha_i$. Moreover, since the boundary language, given by $G$, is invariant by this flipping, the curves $\alpha_i'$ are also subsets of $\partial\T$. Indeed, it can be seen on the left of Figure~\ref{CNS_DTautomaton} that $s\xrightarrow{a}s'$ $\iff$ $-s\xrightarrow{B-1-a}-s'$. We will check in Section~\ref{sub:circchain} that $\alpha_1,\ldots,\alpha_B,\alpha_1',\ldots,\alpha_B'$ is a circular chain.
\item[$3$.] For each $i\in\{1,\ldots,B\}$, $\alpha_i$ is a  locally connected continuum of the Euclidean plan. In particular, it is arcwise connected and there exists a simple arc $\beta_i\subset \alpha_i$ such that  $\beta_i$ has the same endpoints as $\alpha_i$. Similarly to Item $2.$, let $\beta_i'=0.\overline{B-1}-\beta_i$. Then $\beta_i'\subset\alpha_i'$ is a simple arc having the same endpoints as $\alpha_i'$. Therefore, $\beta_1,\ldots,\beta_B,\beta_1',\ldots,\beta_B'$ is a circular chain of simple arcs joined by their endpoints, hence their union 
$$Q':=\bigcup_{i=1}^B\beta_i\cup \bigcup_{i=1}^B\beta_i'\subset\partial \T
$$ is a simple closed curve. 
\item[$4$.] We add further simple arcs $\gamma_i$, $i\in\{1,\ldots,B-2\}$, whose endpoints lie on $Q'$. More precisely, 
$$\gamma_i:=f_if_{B-1}^{-1}\left(\beta_{B-1}\cup\beta_{B}\right)
$$
is a simple arc on $\partial \T$ going through the point $0.i\overline{A-2}$ and with endpoints on $Q'$ (see Section~\ref{sub:gamma}).
\item[$5$.] The curve 
$$Q:=Q'\cup\bigcup_{i=1}^{B-2}\gamma_i
$$
meets the assumptions of Theorem~\ref{th:nt04}, as proved in Section~\ref{sub:checkassu}.
\end{itemize}
Note that this strategy generalizes to a large range of parameters $A,B$ the construction of $Q$ obtained for the case $A=4,B=5$ in~\cite{NgaiTang04}, with the additional property that $Q$ is now a subset of $\partial\T$.\\

\begin{table}
{\tiny
$$\begin{array}{|c|c|c|c|c|}
 \hline
&s_i&t_i&C(s_i)&C(t_i)\\\hline
\rule{0pt}{3ex}\rule[-1.6ex]{0pt}{0pt} \alpha_{1}&(6;\mathbf{2(B-A)-2},\mathbf{1},\mathbf{B-2},\overline{\mathbf{2}})
&(6;\mathbf{2(B-A)-1},\mathbf{2A-2},\mathbf{4},\overline{\mathbf{2}})
& .10(B-1)\overline{(B-1)0}&.1(A-2)(B-2)\overline{0(B-1)}
\\\hline
\vdots&\vdots&\vdots&\vdots&\vdots\\\hline
\rule{0pt}{3ex}\rule[-1.6ex]{0pt}{0pt} \alpha_{B-A-1}&(6;\mathbf{2},\mathbf{1},\mathbf{B-2},\overline{\mathbf{2}})
&(6;\mathbf{3},\mathbf{2A-2},\mathbf{4},\overline{\mathbf{2}})
&.(B-A-1)0(B-1)\overline{(B-1)0}&.(B-A-1)(A-2)(B-2)\overline{0(B-1)}
\\\hline
 \rule{0pt}{3ex}\alpha_{B-A}&(5;\mathbf{2A-1},\mathbf{1},\mathbf{B-2},\overline{\mathbf{2}})
&(6;\mathbf{1},\mathbf{2A-2},\mathbf{4},\overline{\mathbf{2}})
&.(B-A)0(B-1)\overline{(B-1)0}&.(B-A)(A-2)(B-2)\overline{0(B-1)}
\\\hline
\rule{0pt}{3ex}\rule[-1.6ex]{0pt}{0pt} \alpha_{B-A+1}&(5;\mathbf{2A-3},\mathbf{1},\mathbf{B-2},\overline{\mathbf{2}})
&(5;\mathbf{2A-2},\mathbf{2A-2},\mathbf{4},\overline{\mathbf{2}})&&
\\\hline
\vdots&\vdots&\vdots&\vdots&\vdots\\\hline
\rule{0pt}{3ex}\rule[-1.6ex]{0pt}{0pt} \alpha_{B-2}&(5;\mathbf{3},\mathbf{1},\mathbf{B-2},\overline{\mathbf{2}})
&(5;\mathbf{4},\mathbf{2A-2},\mathbf{4},\overline{\mathbf{2}})
&.(B-2)0(B-1)\overline{(B-1)0}&
\\\hline
\rule{0pt}{3ex}\rule[-1.6ex]{0pt}{0pt} \alpha_{B-1}&(5;\mathbf{2},\overline{\mathbf{2A-2}})
&(5;\mathbf{2},\mathbf{2A-2},\mathbf{4},\overline{\mathbf{2}})
&.(B-1)\overline{A-2}&.(B-1)(A-2)(B-2)\overline{0(B-1)}
\\\hline
\rule{0pt}{3ex}\rule[-1.6ex]{0pt}{0pt} \alpha_{B}&(3;\mathbf{2(B-A)},\mathbf{1},\mathbf{2(B-A)+1},\overline{\mathbf{2}})
&(3;\mathbf{2(B-A)+1},\overline{\mathbf{2A-2}})
&.(B-1)(B-1)0\overline{0(B-1)}&.(B-1)\overline{A-2}
\\\hline
\end{array}$$
}
\caption{Endpoints of the curves $\alpha_i$, $i\in\{1,\ldots,B\}$. Case $2A-B=3,\;A\ne B$.}\label{tab:alpha}
\end{table}

\subsection{The curves $\alpha_1,\alpha_2,\ldots,\alpha_B$ form a chain}\label{sub:alphachain}

The proof of this fact relies on a lemma giving an account of the intersections of the natural subdivisions of $\T$ obtained by iterating (\ref{SATile}). For $a_1,\ldots, a_m\in\{0,\ldots, B-1\}$, we denote by $\T_{a_1\ldots a_m}$ the subdivision $M^{-1}\left(\begin{array}{c}a_1\\0\end{array}\right)+\cdots+M^{-m}\left(\begin{array}{c}a_m\\0\end{array}\right)+M^{-m}\T$. We also further use the notations~\eqref{eq:not} and~\eqref{eq:not2}.

\begin{lemma}\label{lem:subd} The following assertions hold for digits $a_1,a_1',\ldots,a_4,a_4'\in\{0,\ldots,B-1\}$ with $a_1\ne a_1'$.
\begin{itemize}
\item $\T_{a_1}\cap\T_{a_1'}\ne\emptyset\iff a_1-a_1'=\pm1$.
\item Suppose $a_1-a_1'=1$. Then 
$$\T_{a_1a_2}\cap\T_{a_1'a_2'}\ne\emptyset\iff a_2-a_2'\in\{A,A-1,A-2\}.
$$
\item Suppose $a_1-a_1'=1$ and $a_2-a_2'=A$. Then 
$$\T_{a_1a_2a_3a_4}\cap\T_{a_1'a_2'a_3'a_4'}\ne\emptyset\iff 
\left\{\begin{array}{l}a_3=B-1,a_3'=0\\a_4-a_4'\in\{-A,-A+1,-A+2\}.\end{array}\right.
$$
\item Suppose $a_1-a_1'=1$ and $a_2-a_2'=A-1$. Then 
$$\begin{array}{cl}&\T_{a_1a_2a_3a_4}\cap\T_{a_1'a_2'a_3'a_4'}\ne\emptyset\\\\
\iff &\left\{\begin{array}{l}
a_3-a_3'=B-A\\a_4=0,a_4'=B-1
\end{array}\right.
\textrm{or } \left\{\begin{array}{l} a_3-a_3'=B-A+1\\a_4-a_4'\in\{A-B,A-B-1,A-B-2\}\end{array}\right.\\\\
&\textrm{or }\left\{\begin{array}{l} a_3-a_3'=B-A+2\\a_4-a_4'=1.
\end{array}\right.
\end{array}
$$
\item Suppose $a_1-a_1'=1$ and $a_2-a_2'=A-2$. Then 
$$\T_{a_1a_2a_3}\cap\T_{a_1'a_2'a_3'}\ne\emptyset\iff 
a_3-a_3'=-1.
$$
In this case, 
$$\T_{a_1a_2a_3}\cap\T_{a_1'a_2'a_3'}=\{0.a_1a_2a_3\overline{0(B-1)}\}=\{0.a_1'a_2'a_3'\overline{(B-1)0}\}.
$$
\end{itemize}
\end{lemma}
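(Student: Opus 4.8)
The plan is to prove each of the five assertions by the same mechanism: two subtiles $\T_{a_1\ldots a_m}$ and $\T_{a_1'\ldots a_m'}$ intersect if and only if the difference of their ``base points'' produces a vector lying in the neighbor set $\mathcal{S}$ of~\eqref{eq:neighset34}, since $\T_{a_1\ldots a_m}\cap\T_{a_1'\ldots a_m'}\neq\emptyset$ is equivalent (because $M^{-m}$ is invertible and by the tiling property) to $\T\cap(\T+s)\neq\emptyset$ where $s=\sum_{j=1}^m M^{m-j}\bigl(\begin{smallmatrix}a_j-a_j'\\0\end{smallmatrix}\bigr)=\sum_{j=1}^m M^{m-j}(a_j-a_j')e_1$ with $e_1=\bigl(\begin{smallmatrix}1\\0\end{smallmatrix}\bigr)$. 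So the whole lemma reduces to a finite case analysis: for each prescribed prefix of digit-differences, compute $s$ as an explicit integer vector depending on the remaining free differences, and intersect the resulting affine family of vectors with the ten-element set $\mathcal{S}$.

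First I would record the elementary computations of the relevant powers applied to $e_1$: $Me_1=\bigl(\begin{smallmatrix}0\\1\end{smallmatrix}\bigr)$, $M^2 e_1=\bigl(\begin{smallmatrix}-B\\-A\end{smallmatrix}\bigr)$, $M^3 e_1 = \bigl(\begin{smallmatrix}AB\\ A^2-B\end{smallmatrix}\bigr)$ — these feed directly into the four-letter cases. For the first bullet, $s=(a_1-a_1')e_1$, and from $\mathcal{S}$ the only multiples of $e_1$ are $\pm e_1$, giving $a_1-a_1'=\pm1$. For the second bullet, with $a_1-a_1'=1$ fixed, $s=M(a_2-a_2')e_1+e_1=\bigl(\begin{smallmatrix}1\\a_2-a_2'\end{smallmatrix}\bigr)$; comparing with the second coordinates of the vectors in $\mathcal{S}$ (which are $0,\pm1,\pm2$) forces second coordinate $1$, and among $\{(A,1),(A-1,1),(A-2,1)\}$ the first coordinate is $1$ only when $a_2-a_2'\in\{A,A-1,A-2\}$ — here one uses $2A-B=3$ so that $A,A-1,A-2$ are exactly the admissible digit-differences and the corresponding first coordinates $1$ match $\bigl(\begin{smallmatrix}1\\ \cdot\end{smallmatrix}\bigr)$ after reading off $A-(a_2-a_2')\in\{0,1,2\}$... more precisely one checks that $\bigl(\begin{smallmatrix}1\\ j\end{smallmatrix}\bigr)\in\mathcal{S}$ iff $j=1$ and then $1\in\{A,A-1,A-2\}$, i.e. the statement is that the vectors $P_1=\bigl(\begin{smallmatrix}1\\0\end{smallmatrix}\bigr)$ shifted... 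I will simply tabulate $\mathcal{S}$ and match coordinates. For the two four-letter bullets, with the prefix differences $1$ and $A$ (resp. $1$ and $A-1$) fixed, $s$ becomes $M^3(a_3-a_3')e_1+M^2\cdot(\text{prefix contribution})+\cdots$; expanding gives an explicit vector whose second coordinate is a linear function of $a_3-a_3'$ and $a_4-a_4'$, and again matching against the second coordinates $0,\pm1,\pm2$ of $\mathcal{S}$ pins down $a_3-a_3'$, after which the first coordinate pins down $a_4-a_4'$. The fifth bullet is analogous with three letters: prefix $1,A-2$, then $s=M^2(a_3-a_3')e_1+M(A-2)e_1+e_1$, and one finds the only solution is $a_3-a_3'=-1$ with $s=\bigl(\begin{smallmatrix}B\\A-1\end{smallmatrix}\bigr)$ or so, i.e. $s=Q_1=-R$ type — then the single intersection point is identified by noting $\T\cap(\T+s)$ is a single point (a vertex of the tiling) whose address is computed from the fixed point of the appropriate contraction, giving $0.a_1a_2a_3\overline{0(B-1)}$.

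The step I expect to be the genuine obstacle is the bookkeeping in the two four-letter cases: there the free vector $s$ depends on two digit-differences $a_3-a_3'$ and $a_4-a_4'$ ranging over $\{-(B-1),\ldots,B-1\}$, and one must carefully use the constraint $2A-B=3$ together with the requirement that $a_i,a_i'$ are genuine digits in $\{0,\ldots,B-1\}$ to rule out all but the three listed sub-cases, and in the admitted sub-cases to check that the digit constraints (like $a_3=B-1,a_3'=0$, forced by $a_3-a_3'=B-1$ only when both extremes are attained) are exactly as stated. Concretely, after writing $s=\bigl(\begin{smallmatrix}x\\y\end{smallmatrix}\bigr)$ with $x,y$ affine in $(a_3-a_3',a_4-a_4')$, I would: (a) enumerate which elements of $\mathcal{S}$ have $y$-coordinate in the attainable range, (b) for each, solve for $a_3-a_3'$ from the $y$-equation and then $a_4-a_4'$ from the $x$-equation, (c) discard solutions incompatible with the digit bounds, and (d) translate the surviving congruences back into the ``$a_3-a_3'=B-A$, $a_4=0$, $a_4'=B-1$'' form. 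I would also double-check the claimed explicit intersection point in the last bullet by verifying directly that $0.a_1a_2a_3\overline{0(B-1)}$ lies in both subtiles, using $\overline{0(B-1)}$ and $\overline{(B-1)0}$ representing the same real point (the right-hand and left-hand endpoints of the relevant $1$-dimensional face), and that $\T\cap(\T+R)$ — or whichever neighbor — is a singleton, which follows from the contact graph $G$ having no nontrivial cycle at that neighbor state, or can be read off from the boundary GIFS in Proposition~\ref{prop:primsubautomaton}.
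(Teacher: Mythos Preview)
Your strategy is exactly the one the paper uses: rescale by $M^m$ so that $\T_{a_1\ldots a_m}\cap\T_{a_1'\ldots a_m'}\ne\emptyset$ becomes $\bigl(\sum_{j=1}^m M^{m-j}(a_j-a_j')e_1\bigr)\in\mathcal{S}\cup\{0\}$, then match coordinates against the explicit list~\eqref{eq:neighset34}. The paper does precisely this, writing down the four vectors
\[
\begin{pmatrix}a_1-a_1'\\0\end{pmatrix},\quad
\begin{pmatrix}a_2-a_2'\\a_1-a_1'\end{pmatrix},\quad
\begin{pmatrix}a_3-a_3'-B(a_1-a_1')\\a_2-a_2'-A(a_1-a_1')\end{pmatrix},\quad
\begin{pmatrix}a_4-a_4'-B(a_2-a_2')+AB(a_1-a_1')\\a_3-a_3'-A(a_2-a_2')+(A^2-B)(a_1-a_1')\end{pmatrix}
\]
and then simply says the first four bullets follow ``directly'' by inspection of $\mathcal{S}$, while the last bullet (including the singleton intersection) follows ``by induction'' --- i.e.\ one keeps applying the same criterion to force $a_4-a_4'=-(B-1)$, $a_5-a_5'=B-1$, and so on, which is equivalent to your contact-graph observation.

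One slip to fix: in your second-bullet computation you wrote $s=M(a_2-a_2')e_1+e_1=\bigl(\begin{smallmatrix}1\\a_2-a_2'\end{smallmatrix}\bigr)$, but your own formula $s=\sum_j M^{m-j}(a_j-a_j')e_1$ with $a_1-a_1'=1$ gives $s=Me_1+(a_2-a_2')e_1=\bigl(\begin{smallmatrix}a_2-a_2'\\1\end{smallmatrix}\bigr)$. With the coordinates in the right order the match against $\mathcal{S}$ is immediate (second coordinate $1$ forces $\bigl(\begin{smallmatrix}A\\1\end{smallmatrix}\bigr)$, $\bigl(\begin{smallmatrix}A-1\\1\end{smallmatrix}\bigr)$ or $\bigl(\begin{smallmatrix}A-2\\1\end{smallmatrix}\bigr)$), and the confusion in your middle paragraph disappears.
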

\begin{proof} The following equivalences hold. 
\begin{itemize}
\item $\T_{a_1}\cap\T_{a_1'}\ne\emptyset
\iff M\left(\T_{a_1}\cap\T_{a_1'}\right)\ne\emptyset
\iff \left(\begin{array}{c}a_1-a_1'\\0\end{array}\right)\in\mathcal{S}$.
\item[]
\item 
$\T_{a_1a_2}\cap\T_{a_1'a_2'}\ne\emptyset
\iff M^2\left(\T_{a_1a_2}\cap\T_{a_1'a_2'}\right)\ne\emptyset
\iff\left(\begin{array}{c}a_2-a_2'\\a_1-a_1'\end{array}\right)
\in\mathcal{S}$.
\item[]
\item Using the fact that $M^2=-AM-B I_2$, we further have 
$$\begin{array}{c}
\T_{a_1a_2a_3}\cap\T_{a_1'a_2'a_3'}\ne\emptyset\iff M^3\left(\T_{a_1a_2a_3}\cap\T_{a_1'a_2'a_3'}\right)\ne\emptyset\\\\
\iff

\left(\begin{array}{c}a_3-a_3'-B(a_1-a_1')\\a_2-a_2'-A(a_1-a_1')\end{array}\right)\in\mathcal{S}.\end{array}$$
\item[]
\item Finally,
$$\begin{array}{c}
\T_{a_1a_2a_3a_4}\cap\T_{a_1'a_2'a_3'a_4'}\ne\emptyset
\iff M^4\left(\T_{a_1a_2a_3a_4}\cap\T_{a_1'a_2'a_3'a_4'}\right)\ne\emptyset\\\\
\iff
\left(\begin{array}{c}a_4-a_4'-B(a_2-a_2')+AB(a_1-a_1')\\a_3-a_3'-A(a_2-a_2')+(A^2-B)(a_1-a_1')\end{array}\right)
\in\mathcal{S}.
\end{array}
$$
\end{itemize}
 Now, $\mathcal{S}$ is given in~(\ref{eq:neighset34}) and this proves directly the first four assertions of the lemma, as well as the last assertion by induction.
\end{proof} 
\begin{remark}\label{rem:eqpoints}In particular, it follows from the last item of the above lemma that, for digits  $a_1,a_1',a_2,a_2',a_3,a_3'\in\mathcal{D}$, we have
$$\left.\begin{array}{rcl}a_1-a_1'&=&1\\ a_2-a_2'&=&A-2\\a_3-a_3'&=&-1\end{array}\right\}\;\Rightarrow \;0.a_1a_2a_3\overline{0(B-1)}=0.a_1'a_2'a_3'\overline{(B-1)0}.
$$
This will be used several times in order to check that the endpoints of certain boundary parts of $\T$ coincide.
\end{remark}

The next lemma gives a precise description of the points constituting the curves $\alpha_1,\ldots,\alpha_B$. It will be used to prove that certain boundary parts have either empty or one-point intersection. According to the ordering chosen for the states of the graph $G$, we denote by 
$$K_1:=K_{-R},K_2:=K_{Q_1},K_3:=K_{-P_1},K_4:=K_{R},K_5:=K_{-Q_1},K_6:=K_{P_1}
$$ 
the compact sets of Proposition~\ref{prop:primsubautomaton}. Moreover, for a finite string of digits $a_1\ldots a_m$ and a set $K\in\{K_1,\ldots,K_6\}$,  we write
$$0.a_1\ldots a_m[K]:=\left\{0.a_1\ldots a_mb_1b_2\ldots;0.b_1b_2\ldots\in K\right\}.
$$
\begin{lemma}\label{lem:descalpha}
For $i=1,\ldots,B-2$, we have
$$\begin{array}{rcl}
\alpha_i &=&\{0.i0(B-1)\overline{(B-1)0}\}\\\\
&&\cup\;\bigcup_{p\geq0}0.i0(B-1)((B-1)0)^p (B-A)[K_1] \;\cup\; \bigcup_{p\geq0}\bigcup_{k=B-A+1}^{B-2}0.i0(B-1)((B-1)0)^p k[K_1\cup K_2]
\\\\
&&\cup\; \bigcup_{p\geq0}\bigcup_{k=1}^{A-2}0.i0(B-1)(B-1)(0(B-1))^p k[K_4\cup K_5] \\\\
&&\hspace{1cm}\cup\;  \bigcup_{p\geq0}0.i0(B-1)(B-1)(0(B-1))^p (A-1)[K_4] 
\\\\
&&\cup\;\bigcup_{k=0}^{A-3}0.ik[K_4\cup K_5]\;\cup\;0.i(A-2)[K_4]\\\\
&&\cup\;0.i(A-2)(B-2)[K_1]\;\cup\;0.i(A-2)(B-1)[K_1\cup K_2] \\\\
&&\cup\;\bigcup_{p\geq0}0.i(A-2)(B-2)(0(B-1))^p0[K_4]
\;\cup\;\bigcup_{p\geq0}0.i(A-2)(B-2)0((B-1)0)^p(B-1)[K_1]\\\\
&&\cup\;\{0.i(A-2)(B-2)\overline{0(B-1)}\}.
\end{array}
$$
Moreover, 
$$\begin{array}{rcl}
\alpha_{B-1} &=&\{0.(B-1)(A-2)(B-2)\overline{0(B-1)}\}\\\\
&&\cup\; \bigcup_{p\geq0}\bigcup_{k=0}^{A-2}0.(B-1)(A-2)(B-2)(0(B-1))^p k[K_4\cup K_5]\\\\
&&\hspace{1cm}\cup\;\bigcup_{p\geq0}0.(B-1)(A-2)(B-2)(0(B-1))^p (A-1)[K_4]
\\\\
&&\cup\; \bigcup_{p\geq0}0.(B-1)(A-2)(B-2)0((B-1)0)^p (B-A)[K_1]  \\\\
&&\hspace{1cm}\cup\;   \bigcup_{p\geq0}\bigcup_{k=B-A+1}^{B-1}0.(B-1)(A-2)(B-2)0((B-1)0)^p k[K_1\cup K_2]
\\\\
&&\cup\;0.(B-1)(A-2)(B-A+1)[K_1]\;\cup\;\bigcup_{k=B-A+2}^{B-3}0.(B-1)(A-2)k[K_1\cup K_2]\\\\
&&\cup\;\bigcup_{p\geq1}0.(B-1)(A-2)((B-A+1)(A-2))^p(A-2)[K_1]\\\\
&&\hspace{1cm}\cup\;\bigcup_{p\geq1}\bigcup_{k=A-1}^{B-1}0.(B-1)(A-2)((B-A+1)(A-2))^pk[K_1\cup K_2]\\\\
&&\cup\;\bigcup_{p\geq1}\bigcup_{k=0}^{A-3}0.(B-1)(A-2)(B-A+1)((A-2)(B-A+1))^pk[K_4\cup K_5]\\\\
&&\hspace{1cm}\cup\;\bigcup_{p\geq1}0.(B-1)(A-2)(B-A+1)((A-2)(B-A+1))^p(A-2)[K_4]\\\\
&&\cup\;\{0.(B-1)\overline{(A-2)(B-A+1)}\}.
\end{array}
$$
Finally, 
$$\begin{array}{rcl}
\alpha_{B} &=&\{0.(B-1)(B-1)0\overline{0(B-1)}\}\\\\
&&\cup\; \bigcup_{p\geq0}\bigcup_{k=1}^{A-2}0.(B-1)(B-1)0(0(B-1))^p k[K_4\cup K_5]\\\\
&&\hspace{1cm}\cup\;\bigcup_{p\geq0}0.(B-1)(B-1)0(0(B-1))^p (A-1)[K_4]
\\\\
&&\cup\; \bigcup_{p\geq0}0.(B-1)(B-1)00((B-1)0)^p (B-A)[K_1]  \\\\
&&\hspace{1cm}\cup\;   \bigcup_{p\geq0}\bigcup_{k=B-A+1}^{B-2}0.(B-1)(B-1)00((B-1)0)^p k[K_1\cup K_2]
\\\\
&&\cup\;\bigcup_{p\geq0}0.(B-1)((B-A+1)(A-2))^p(B-A+1)[K_1]\\\\
&&\hspace{1cm}\cup\;\bigcup_{p\geq0}\bigcup_{k=B-A+2}^{B-1}0.(B-1)((B-A+1)(A-2))^pk[K_1\cup K_2]\\\\
&&\cup\;\bigcup_{p\geq0}\bigcup_{k=0}^{A-3}0.(B-1)(B-A+1)((A-2)(B-A+1))^pk[K_4\cup K_5]\\\\
&&\hspace{1cm}\cup\;\bigcup_{p\geq0}0.(B-1)(B-A+1)((A-2)(B-A+1))^p(A-2)[K_4]\\\\
&&\cup\;\{0.(B-1)\overline{(B-A+1)(A-2)}\}.
\end{array}
$$
\end{lemma}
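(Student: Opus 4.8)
The plan is to derive the precise description of $\alpha_i$ directly from the defining data in Table~\ref{tab:alpha} by unfolding the boundary GIFS of Proposition~\ref{prop:primsubautomaton} along the interval $[s_i,t_i]$. First I would reinterpret the parameters $s_i,t_i$: each is encoded by an infinite walk in the ordered automaton $G^o$, and $C([s_i,t_i])$ is the union of those $K_s$-pieces (and sub-pieces thereof) that lie between the two walks in the ordering. Concretely, $C([s_i,t_i])=\psi(W_i)$ where $W_i$ is the set of infinite walks in $G^o$ lying lexicographically between the walk encoding $s_i$ and the one encoding $t_i$. So the whole computation reduces to: (a) identifying, level by level, which edges of $G^o$ are ``in range'', (b) translating the resulting walks back to $G$ via the bijection described before Theorem~\ref{th:boundparam}, and (c) reading off the digit strings $a_1a_2\ldots$ and the terminal pieces $K_j$ reached, which gives exactly the $0.a_1\ldots a_m[K_j]$-notation appearing in the statement.

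The key steps, in order, are as follows. I would first treat $\alpha_{B-1}$ and $\alpha_B$, since their endpoints in Table~\ref{tab:alpha} start with short, explicit walks out of states $5$ and $3$ respectively, and then handle the generic $\alpha_i$ ($1\le i\le B-2$) whose endpoint walks start from state $6$; in each case the initial segment of the walk fixes the common digit prefix ($i0(B-1)$, resp.\ $(B-1)(A-2)(B-2)$, resp.\ $(B-1)(B-1)0$) shown in the lemma. Second, within the range I would peel off the periodic middle part: the walks encoding $s_i$ and $t_i$ agree on a prefix, then branch, and on the branch toward $s_i$ the automaton cycles through a loop producing the alternating block $(0(B-1))^p$ or $((B-1)0)^p$, while on the branch toward $t_i$ it cycles through the complementary loop — this is the origin of the two families of $\bigcup_{p\ge0}$ unions. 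Third, for each fixed $p$ I would enumerate the finitely many edges leaving the relevant state that fall strictly between the two boundary walks; these contribute the ranges $k\in\{B-A+1,\ldots,B-2\}$, $k\in\{1,\ldots,A-2\}$, $k\in\{0,\ldots,A-3\}$, etc., together with the attached boundary piece $K_1$, $K_1\cup K_2$, $K_4$, or $K_4\cup K_5$ determined by the target state of the edge. Finally, the two extreme walks themselves map under $\psi$ to the single points $0.i0(B-1)\overline{(B-1)0}$ and $0.i(A-2)(B-2)\overline{0(B-1)}$ (and analogously for $\alpha_{B-1},\alpha_B$), which are the isolated singletons in the statement; here I would use Remark~\ref{rem:eqpoints} (the last item of Lemma~\ref{lem:subd}) to rewrite purely periodic tails such as $\overline{(A-2)(B-A+1)}$ in the closed form displayed.

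The main obstacle will be the sheer bookkeeping of the ordering of edges in $G^o$: one must know, for every state $s^{(i)}$ and every digit $a$, the rank $\mathbf{o}$ of the edge $s^{(i)}\xrightarrow{a}s^{(j)}$, and must carefully track how the lexicographic order on $G^o$-walks interacts with the digit order $a\leftrightarrow B-1-a$ induced by the two branches. A secondary subtlety is the appearance of the mixed terminal pieces $K_1\cup K_2$ and $K_4\cup K_5$: these occur exactly at the ``boundary'' edges of the admissible range, where the in-range walks exhaust all sub-pieces of the target state (hence the full $K_{s'}$) for the strictly interior digits $k$, but only part of it for the extremal digit — so one has to argue that for those extremal $k$ the corresponding in-range sub-walks do reach both of the relevant sub-pieces. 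I expect that once the structure of $G^o$ near states $1$ through $6$ is laid out explicitly (using Figure~\ref{primsubautomaton}), each of the three displayed formulas follows by a finite, if tedious, verification, and the genuinely new content is organizational rather than conceptual; accordingly I would present the proof as a careful walk-through of $G^o$ rather than as a sequence of separate lemmas.
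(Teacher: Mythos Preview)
Your proposal is correct and follows essentially the same approach as the paper: identify $\alpha_i=C([s_i,t_i])$ with $\psi$ applied to the set of infinite walks in $G^o$ lying lexicographically between the two endpoint walks from Table~\ref{tab:alpha}, then enumerate these walks by case analysis and read off the digit prefixes and terminal pieces $K_j$. The paper's proof is in fact much terser than your outline --- it writes out the case split only for $\alpha_1$, lists the six types of admissible walks, and then says ``all the other cases are treated similarly'' --- so your more detailed bookkeeping plan (handling the periodic blocks, tracking which edges are in range, explaining why $K_1\cup K_2$ and $K_4\cup K_5$ appear) is a faithful expansion of what the paper leaves implicit.
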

\begin{proof}
Since $\alpha_1=C([s_1,t_1])$, this boundary part is described in $G^o$ as the set of infinite walks $w$ satisfying
$$(6;\mathbf{2(B-A)-2},\mathbf{1},\mathbf{B-2},\overline{\mathbf{2}})
\;\leq_{\textrm{lex}} w\;\leq_{\textrm{lex}} (6;\mathbf{2(B-A)-1},\mathbf{2A-2},\mathbf{4},\overline{\mathbf{2}})
$$ 
(see Table~\ref{tab:alpha} for the walks corresponding to the parameters $s_1$ and $t_1$). This splits into different cases:
$$w=(6;\mathbf{2(B-A)-2},\mathbf{1},\mathbf{B-2},\overline{\mathbf{2}})
$$
or 
$$w=(6;\mathbf{2(B-A)-2},\mathbf{1},\mathbf{B-2},\mathbf{2}^p,\mathbf{o},\ldots)\;\;(p\geq 0,\mathbf{o}\in\{\mathbf{3,4,\ldots,2A-1}\})
$$
or 
$$w=(6;\mathbf{2(B-A)-1},\mathbf{o},\ldots)\;\;(\mathbf{o}\in\{\mathbf{1,2,\ldots,2A-3}\})
$$
or
$$w=(6;\mathbf{2(B-A)-1},\mathbf{2A-2},\mathbf{o},\ldots)\;\;(\mathbf{o}\in\{\mathbf{1,2,3}\})
$$
or
$$w=(6;\mathbf{2(B-A)-1},\mathbf{2A-2},\mathbf{4},\mathbf{2^p},\mathbf{1},\ldots)\;\;(p\geq 0)
$$
or
$$(6;\mathbf{2(B-A)-1},\mathbf{2A-2},\mathbf{4},\overline{\mathbf{2}}).$$

Reading the corresponding sequences of digits leads to the description of $\alpha_1$. All the other cases are treated similarly.
\end{proof}
\begin{remark}\label{rem:descalphaprime}
A similar description is obtained for the curves $\alpha_i'$ ($i\in\{1,\ldots,B\}$) by changing every digit $a$ to the digit $B-1-a$ and by interchanging the sets $K_1\leftrightarrow K_4$,  $K_2\leftrightarrow K_5$ and $K_3\leftrightarrow K_6$.
\end{remark}
\begin{lemma}\label{lem:alphachain}
The curves $\alpha_1,\alpha_2,\ldots,\alpha_B$ form a chain.
\end{lemma}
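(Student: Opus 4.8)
The plan is to verify the chain property directly from the explicit point-set description of the $\alpha_i$ supplied by Lemma~\ref{lem:descalpha}, using Lemma~\ref{lem:subd} as the combinatorial engine. Concretely, a chain requires three things: (a) each $\alpha_i$ is a (connected, locally connected) continuum --- this is automatic, since each $\alpha_i=C([s_i,t_i])$ is the continuous image of an interval under the H\"older map $C$ of Theorem~\ref{th:boundparam}; (b) $\#(\alpha_i\cap\alpha_{i+1})=1$ for $1\le i\le B-1$; and (c) $\alpha_i\cap\alpha_j=\emptyset$ whenever $|i-j|\ge 2$. So after recording (a) in one sentence, the work is entirely in (b) and (c).

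For the consecutive intersections (b), I would compare the ``right endpoint data'' of $\alpha_i$ with the ``left endpoint data'' of $\alpha_{i+1}$ as read off from Table~\ref{tab:alpha}. For $1\le i\le B-2$ the right end of $\alpha_i$ is $C(t_i)=0.i(A-2)(B-2)\overline{0(B-1)}$ and the left end of $\alpha_{i+1}$ is $C(s_{i+1})=0.(i+1)0(B-1)\overline{(B-1)0}$; by Remark~\ref{rem:eqpoints} (applied with $a_1-a_1'=1$, $a_2-a_2'=A-2$, $a_3-a_3'=-1$) these two addresses represent the same point of $\partial\T$, so $\alpha_i$ and $\alpha_{i+1}$ share at least that point. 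For the junctions $\alpha_{B-2}$–$\alpha_{B-1}$, $\alpha_{B-1}$–$\alpha_B$ one matches the corresponding endpoint rows of the table in the same spirit. To see the intersection is exactly one point and not more, I would use Lemma~\ref{lem:descalpha}: every remaining piece of $\alpha_i$ and of $\alpha_{i+1}$ lies in a subtile $\T_{a_1\ldots a_m}$ with a prescribed prefix, and Lemma~\ref{lem:subd} shows that two such subtiles with the prefixes occurring in $\alpha_i$ resp.\ $\alpha_{i+1}$ either coincide in the single boundary point already identified or are disjoint --- the neighbor set \eqref{eq:neighset34} is small enough that no other coincidence survives past the first few digits.

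For the non-consecutive disjointness (c), the key observation is that $\alpha_i$ (for $i\le B-2$) is contained in $\T_i\cup\T_{i}{}'$-type cylinders --- more precisely every point of $\alpha_i$ has an address beginning with the digit $i$ except the endpoints, whose alternative addresses begin with $i\pm1$ (this is exactly the content of Lemma~\ref{lem:descalpha}: the prefixes appearing are $i0(B-1)\ldots$, $ik\ldots$, $i(A-2)(B-2)\ldots$, plus the shared-endpoint addresses). Thus for $|i-j|\ge 2$ any point common to $\alpha_i$ and $\alpha_j$ would have to lie in $\T_a\cap\T_b$ with $|a-b|\ge 2$ after passing to admissible addresses, which by the first item of Lemma~\ref{lem:subd} is empty; the only subtlety is the endpoint addresses, whose leading digits differ by at most $1$ from $i$ resp.\ $j$, hence still differ by at least $1$ when $|i-j|\ge2$, keeping the intersection empty. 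The two special curves $\alpha_{B-1}$ and $\alpha_B$ both lie over the digit $B-1$, but Lemma~\ref{lem:descalpha} shows their second digits are $A-2$ resp.\ $B-1$, and $(B-1)-(A-2)=B-A+1\ge 2$ under $2A-B=3$ (so $B-A+1 = B-A+1 \ge 2$ iff $B\ge A+1$, which holds since $A\ne B$), so after the first digit Lemma~\ref{lem:subd} again forces disjointness from the $\alpha_i$ with $i\le B-3$; and $\alpha_{B-1}\cap\alpha_B$ is the single shared endpoint handled in (b).

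The main obstacle is bookkeeping rather than conceptual: one must run through every pair of prefix families appearing in the (rather long) statement of Lemma~\ref{lem:descalpha}, and for each pair check via the relevant item of Lemma~\ref{lem:subd} that the associated subtiles are disjoint or meet in the already-identified point. I expect the genuinely delicate comparisons to be the ``local'' ones near a shared endpoint --- e.g.\ pieces of $\alpha_i$ with prefix $i(A-2)(B-2)0((B-1)0)^p(B-1)\ldots$ against pieces of $\alpha_{i+1}$ with prefix $(i+1)0(B-1)((B-1)0)^q(B-A)\ldots$ --- where both families accumulate at the same point and one needs the sharper last item of Lemma~\ref{lem:subd} (the one that pins the intersection down to $\{0.a_1a_2a_3\overline{0(B-1)}\}$) to conclude that no second common point appears. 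Once that local analysis is done, the global disjointness statements follow immediately from the coarse leading-digit argument above.
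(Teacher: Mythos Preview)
Your overall strategy coincides with the paper's: use the subtile inclusions extracted from Lemma~\ref{lem:descalpha} together with the neighbor criteria of Lemma~\ref{lem:subd} to separate non-consecutive $\alpha_i$'s and to pin down the consecutive intersections to the single endpoint identified via Remark~\ref{rem:eqpoints}. The paper also exploits the observation (which you do not mention) that $\alpha_2,\ldots,\alpha_{B-2}$ are all translates of $\alpha_1$, so that one computation of $\alpha_1\cap\alpha_2$ settles all junctions up to $\alpha_{B-3}\cap\alpha_{B-2}$.

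There is, however, a genuine gap in your part~(c). You handle $\alpha_B\cap\alpha_i=\emptyset$ only for $i\le B-3$ via the leading-digit argument; the case $\alpha_B\cap\alpha_{B-2}$ is never addressed. Here $\alpha_{B-2}\subset\T_{B-2}$ and $\alpha_B\subset\T_{B-1}$, so the first digits differ by~$1$ and the coarse argument fails. In the paper this case requires a full appendix (Appendix~\ref{app:alphachain}), running through all admissible second, third and fourth digits for both curves and eliminating every potential overlap via the finer items of Lemma~\ref{lem:subd}.

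A second, related problem is your treatment of $\alpha_{B-1}\cap\alpha_B$. You assert that Lemma~\ref{lem:descalpha} gives second digits $A-2$ for $\alpha_{B-1}$ and $B-1$ for $\alpha_B$, and then use $(B-1)-(A-2)\ge 2$. But the description of $\alpha_B$ in Lemma~\ref{lem:descalpha} contains, for instance, the families $0.(B-1)((B-A+1)(A-2))^p(B-A+1)[K_1]$ and $0.(B-1)(B-A+1)((A-2)(B-A+1))^p k[K_4\cup K_5]$, whose second digit is $B-A+1=A-2$ (recall $2A-B=3$), exactly the same as the second digit of every piece of $\alpha_{B-1}$. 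So the second-digit separation you invoke does not exist, and showing that $\alpha_{B-1}\cap\alpha_B$ reduces to the single point $0.(B-1)\overline{A-2}$ requires the substantial case-by-case analysis carried out in Appendix~\ref{app:alphachain}. Your sketch, as it stands, does not cover these two cases.
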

\begin{proof}
The proof is based on Lemmata~\ref{lem:subd} and~\ref{lem:descalpha}. By Lemma~\ref{lem:descalpha}, $\alpha_i\subset \T_i$ for all $1\leq i\leq B-1$ and $\alpha_B\subset\T_{B-1}$. On the other hand, by Lemma~\ref{lem:subd},  $\T_i\cap\T_j=\emptyset$ as soon as $|i-j|\geq 2$. Therefore,  we  deduce that 
$\alpha_i\cap \alpha_j=\emptyset$ for $|i-j|\geq 2$ with $1\leq i,j\leq B-1$, and also $\alpha_B\cap\alpha_i=\emptyset$ for $1\leq i\leq B-3$. The  proof for the remaining case $\alpha_B\cap\alpha_{B-2}=\emptyset$ can be found in Appendix~\ref{app:alphachain}.

Let us show that $\alpha_2\cap\alpha_1$ reduces to a single point. By Lemma~\ref{lem:descalpha}, we have the following inclusions.
$$\alpha_1\subset\bigcup_{k=B-A+1}^{B-1}\T_{10k}\cup\T_{10(B-A)0}\cup\bigcup_{k=1}^{A-2}\T_{1k},
$$
and
$$\alpha_2\subset\bigcup_{k=0}^{A-3}\T_{2k}\cup\T_{2(A-2)(B-1)}\cup\T_{2(A-2)(B-2)0}.
$$
However, by Lemma~\ref{lem:subd}, to have $\T_{2a_2a_3a_4}\cap\T_{1a_2'a_3'a_4'}\ne\emptyset$ requires that 
$$\left\{\begin{array}{c}a_2-a_2'=A\\a_3-a_3'=B-1\end{array}\right.
\textrm{ or }\left\{\begin{array}{c}a_2-a_2'=A-1\\a_3-a_3'\in\{B-A,B-A+1,B-A+2\}\end{array}\right.\textrm{ or }\left\{\begin{array}{c}a_2-a_2'=A-2\\a_3-a_3'=-1\end{array}\right..
$$
Therefore,
$$\alpha_2\cap\alpha_1\subset\T_{2(A-2)(B-2)0}\cap\T_{10(B-1)}=\{0.2(A-2)(B-2)\overline{0(B-1)}\}=\{0.10(B-1)\overline{(B-1)0}\}.
$$
The above equality follows again from Lemma~\ref{lem:subd}. This point set is itself included in $\alpha_1\cap\alpha_2$ by Lemma~\ref{lem:descalpha}.

Now, it can be seen from Lemma~\ref{lem:descalpha} that, for $i\in\{2,\ldots,B-2\}$, $\alpha_i$ is just a translate of $\alpha_1$, by the vector $0.(i-1)$. In this way, we obtain easily that 
$$\alpha_i\cap \alpha_{i+1}=\{0.(i+1)(A-2)(B-2)\overline{0(B-1)}\}=\{0.i0(B-1)\overline{(B-1)0}\}
$$
for all $i\in\{1,\ldots,B-3\}$. 

We now show that  $\alpha_{B-2}\cap\alpha_{B-1}$ reduces to a point. By Lemma~\ref{lem:descalpha}, we have the following inclusions.
$$\alpha_{B-2}\subset \T_{(B-2)0(B-1)}\;\cup\;0.(B-2)0[K_4\cup K_5]\;\cup\;\bigcup_{k=1}^{A-2}\T_{(B-2)k}
$$
and 
$$
\alpha_{B-1}\subset\T_{(B-1)(A-2)}.
$$
By Lemma~\ref{lem:subd}, $\T_{(B-2)0(B-1)}\cap\T_{(B-1)(A-2)}$ reduces to one point, while $\T_{(B-2)k}\cap\T_{(B-1)(A-2)}=\emptyset$ for all $k\geq1$. Therefore, we obtain that 
$$
\begin{array}{rcl}
\alpha_{B-2}\cap\alpha_{B-1}&\subset&\{0.(B-2)0(B-1)\overline{(B-1)0}\}\;\cup\;0.(B-2)0[K_4\cup K_5]\cap \alpha_{B-1}.
\end{array}
$$
Again by Lemma~\ref{lem:subd}, since $\alpha_{B-1}\subset\T_{(B-1)(A-2)}$, the set $0.(B-2)0[K_4\cup K_5]\cap \alpha_{B-1}$ is a subset of the finite set $\bigcup_{a_3'=1}^{B-1}\{0.(B-2)0a_3'\overline{(B-1)0}\}$. Let $a_3'\in\{1,\ldots, B-1\}$. Let us show that the point $0.(B-2)0a_3'\overline{(B-1)0}$ does not belong to $0.(B-2)0[K_4\cup K_5]$. Indeed, 
$$0.(B-2)0[K_4\cup K_5]=0.(B-2)0(B-1)[K_6]\cup\bigcup_{k=B-A+1}^{B-1}0.(B-2)0k[K_1\cup K_2]\cup0.(B-2)0(B-A)[K_1].
$$
However, by Lemma~\ref{lem:subd}, it is easy to check that $\overline{(B-1)0}$ does not belong to $K_6\cup K_1\cup K_2$.

It follows that
$$ 
\alpha_{B-2}\cap\alpha_{B-1}\subset\{0.(B-2)0(B-1)\overline{(B-1)0}\}=\{0.(B-1)(A-2)(B-2)\overline{0(B-1)}\}
$$
and this point set is itself included in $\alpha_{B-2}\cap\alpha_{B-1}$ by Lemma~\ref{lem:descalpha}.

The case $\alpha_{B-1}\cap\alpha_{B}$ can be found in Appendix~\ref{app:alphachain}.
\end{proof}

\subsection{The curves $\alpha_1,\ldots,\alpha_B,\alpha_1',\ldots,\alpha_B'$ form a circular chain}\label{sub:circchain}

\begin{lemma}\label{lem:alphaprimchain}
The curves $\alpha_1',\alpha_2',\ldots,\alpha_B'$ form a chain.
\end{lemma}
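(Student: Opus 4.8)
The plan is to reduce the statement about $\alpha'_1,\dots,\alpha'_B$ entirely to the already-established chain property of $\alpha_1,\dots,\alpha_B$ (Lemma~\ref{lem:alphachain}) via the digit-flip symmetry $x\mapsto 0.\overline{B-1}-x$. The crucial observation is the one recorded in Section~\ref{sub:constrQ}, Item~$2$: the boundary language encoded by $G$ is invariant under $a\leftrightarrow B-1-a$, because $s\xrightarrow{a}s'\in G$ iff $-s\xrightarrow{B-1-a}-s'\in G$. Consequently the involution $\iota:\mathbb{R}^2\to\mathbb{R}^2$, $\iota(x)=0.\overline{B-1}-x$, maps $\partial\T$ onto $\partial\T$ and, by definition, sends $\alpha_i$ to $\alpha'_i$ for each $i\in\{1,\dots,B\}$. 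Since $\iota$ is a homeomorphism of the plane (an affine involution, being a point reflection through $\tfrac12\cdot 0.\overline{B-1}$), it preserves intersection patterns and cardinalities of intersections.

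The argument then proceeds as follows. First I would note that $\iota$ is injective (it is an affine map of the form $x\mapsto c-x$), so for any compact sets $X,Y\subset\partial\T$ we have $\#(\iota(X)\cap\iota(Y))=\#\iota(X\cap Y)=\#(X\cap Y)$ and, in particular, $\iota(X)\cap\iota(Y)=\emptyset$ iff $X\cap Y=\emptyset$. Second, apply this with $X=\alpha_i$, $Y=\alpha_j$: Lemma~\ref{lem:alphachain} gives $\#(\alpha_i\cap\alpha_{i+1})=1$ for $1\le i\le B-1$ and $\alpha_i\cap\alpha_j=\emptyset$ for $|i-j|\ge 2$; applying $\iota$ yields $\#(\alpha'_i\cap\alpha'_{i+1})=1$ for $1\le i\le B-1$ and $\alpha'_i\cap\alpha'_j=\emptyset$ for $|i-j|\ge 2$. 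That is exactly the definition of $\alpha'_1,\dots,\alpha'_B$ forming a chain. (Equivalently, one may use Remark~\ref{rem:descalphaprime}: the explicit description of each $\alpha'_i$ is obtained from that of $\alpha_i$ in Lemma~\ref{lem:descalpha} by the digit swap $a\mapsto B-1-a$ together with the index swaps $K_1\leftrightarrow K_4$, $K_2\leftrightarrow K_5$, $K_3\leftrightarrow K_6$ on the contact pieces, and then one reruns the intersection computations of Lemma~\ref{lem:alphachain} verbatim under this symmetry.)

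The only point requiring a small verification is that $\iota$ genuinely restricts to a self-map of $\partial\T$ and that $\iota(\alpha_i)=\alpha'_i$; both are immediate from the definitions in Section~\ref{sub:constrQ} (the definition of $\alpha'_i$ literally is $0.\overline{B-1}-\alpha_i$, and the language invariance guarantees $\alpha'_i\subset\partial\T$). I do not expect any serious obstacle here: the statement is purely a symmetry consequence of Lemma~\ref{lem:alphachain}, and the ``hard part'' — checking that consecutive pieces meet in exactly one point while non-consecutive pieces are disjoint — has already been done for the $\alpha_i$. If one prefers to avoid invoking the global symmetry and instead wants a self-contained computation, the mild bookkeeping cost is keeping track of the $K_1\leftrightarrow K_4$, $K_2\leftrightarrow K_5$ swaps when transporting the case analysis of Lemma~\ref{lem:alphachain}; but this is mechanical and introduces nothing new.
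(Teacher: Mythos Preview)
Your proposal is correct and matches the paper's own proof essentially verbatim: the paper also deduces the chain property of the $\alpha_i'$ from Lemma~\ref{lem:alphachain} via the digit-flip involution $x\mapsto 0.\overline{B-1}-x$, noting that equality of two expansions is preserved under $a\mapsto B-1-a$ and then reading off $\alpha_i'\cap\alpha_j'$ from $\alpha_i\cap\alpha_j$. Your formulation in terms of the homeomorphism $\iota$ preserving intersection cardinalities is just a slightly more abstract phrasing of the same one-line symmetry argument.
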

\begin{proof}
This property is deduced from Lemma~\ref{lem:alphachain}, stating that the curves $\alpha_1,\alpha_2,\ldots,\alpha_B$ form a chain. Indeed, given two infinite sequence of digits $(a_n)_{n\in\mathbb{N}}$ and $(a_n')_{n\in\mathbb{N}}$, we have
$$\begin{array}{rcl}
0.a_1a_2\ldots&=&0.a_1'a_2'\ldots\\
\iff 0.\overline{B-1}-0.a_1a_2\ldots&=&0.\overline{B-1}-0.a_1'a_2'\ldots\\
\iff 0.(B-1-a_1)(B-1-a_2)\ldots&=&0.(B-1-a_1')(B-1-a_2')\ldots
\end{array}
$$
Hence, by the definition of the curves $\alpha_i'$ (see Section~\ref{sub:constrQ}), we have for all $i,j\in\{1,\ldots, B\}$ with $|i-j|\geq 2$ that
$$\alpha_{i}'\cap\alpha_{j}'=\emptyset
$$
and for all $i\in\{1,\ldots, B-1\}$ that 
$$\alpha_{i+1}'\cap\alpha_i'=\{0.(B-1-(i+1))(B-1-(A-2))(B-1-(B-2))\overline{(B-1)0}\}=\{0.(B-1-i)(B-1)0\overline{0(B-1)}\}.
$$
\end{proof}
\begin{lemma}\label{lem:alphaalphaprimeemptyintersec} For all $(j,k)\notin\{(B,1),(1,B)\}$,
$$\alpha_j\cap\alpha_k'=\emptyset.
$$
\end{lemma}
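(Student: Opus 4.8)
The strategy is to localize the curves $\alpha_j$ and $\alpha_k'$ inside small subdivisions $\T_{a_1\ldots a_m}$ of $\T$, and then invoke Lemma~\ref{lem:subd} to rule out intersections between these subdivisions. First I would recall from Lemma~\ref{lem:descalpha} and Remark~\ref{rem:descalphaprime} that for $1\le j\le B-1$ we have $\alpha_j\subset\T_j$ and $\alpha_B\subset\T_{B-1}$, while $\alpha_k'\subset\T_{B-1-k}$ for $1\le k\le B-1$ and $\alpha_B'\subset\T_0$. By the first item of Lemma~\ref{lem:subd}, $\T_a\cap\T_b=\emptyset$ unless $|a-b|\le 1$. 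So the only pairs $(j,k)$ for which $\alpha_j\cap\alpha_k'$ could be nonempty are those where the first digits of the corresponding subdivisions differ by at most $1$. Writing out the first digits: $\alpha_j$ sits over digit $j$ (or $B-1$ if $j=B$), and $\alpha_k'$ over digit $B-1-k$ (or $0$ if $k=B$). The condition $|j-(B-1-k)|\le 1$, together with $1\le j,k\le B-1$, forces $j+k\in\{B-2,B-1,B\}$; one must also separately handle the pairs involving the index $B$, namely $(B,k)$ gives first digits $B-1$ and $B-1-k$, forcing $k\in\{1,2\}$... wait — one checks carefully that $(B,1)$ and $(1,B)$ are precisely the excluded pairs, and all other ``boundary'' pairs survive to the next stage of the argument. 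So after this first reduction only finitely many families of pairs $(j,k)$ remain to be examined.

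For each surviving pair I would refine the localization by one more digit (or two or three), again using the detailed descriptions in Lemma~\ref{lem:descalpha}. The point is that near its endpoints each $\alpha_j$ lives in a very thin subdivision: e.g. $\alpha_j$ meets $\T_{j0(B-1)}$, $\T_{j(A-2)(B-2)}$, $\T_{jk}$ for $0\le k\le A-2$, etc., and correspondingly $\alpha_k'$ meets the flipped versions. For a pair with, say, $j+k=B-1$ (so the leading digits of $\alpha_j$ and $\alpha_k'$ agree and both equal $j$), I would compare the second digits: $\alpha_j$ ranges over second digits in $\{0,\ldots,A-2\}$ on its ``left part'' plus the value $0$ in $\T_{j0(B-1)}$, whereas (after the flip) $\alpha_k'=0.\overline{B-1}-\alpha_{k}$ ranges over second digits in $\{B-1,B-2,\ldots,B-1-(A-2)\}=\{B-A+1,\ldots,B-1\}$ on the mirrored part. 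Since $2A-B=3$ gives $A-2 < B-A+1$, these second-digit ranges are disjoint, so the corresponding subdivisions $\T_{ja_2}$ and $\T_{ja_2'}$ have equal first digit and thus need $a_2-a_2'=0$ to intersect (by the first item of Lemma~\ref{lem:subd} applied after multiplying by $M$); but the ranges never overlap, contradiction. The remaining boundary-overlap cases (leading digits differing by exactly $1$) are handled the same way: one digit deeper the second-level labels are forced by Lemma~\ref{lem:subd} into the triple $\{A,A-1,A-2\}$, and one checks this is incompatible with the digit ranges occurring in $\alpha_j$ versus $\alpha_k'$. Each such verification is a short finite case analysis.

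The main obstacle is purely bookkeeping: there is no conceptual difficulty, but one must carefully track, for each of the finitely many surviving $(j,k)$, exactly which subdivisions $\alpha_j$ and $\alpha_k'$ can occupy near the region where they approach each other, and confirm that the arithmetic constraints of Lemma~\ref{lem:subd} (which all hinge on $2A-B=3$) make the intersection empty. The delicate pairs are those closest to the excluded $(B,1),(1,B)$, i.e. pairs like $(B-1,1)$, $(B,2)$, $(1,B-1)$, $(2,B)$, where $\alpha_j$ and $\alpha_k'$ genuinely live in adjacent subdivisions and one must descend two or three digits — exactly as in the last item of Lemma~\ref{lem:subd} — to see the separation; I expect those to need explicit reference to the $K_s$-descriptions in Lemma~\ref{lem:descalpha}, and their complete verification would naturally be relegated to an appendix alongside the analogous computations for $\alpha_B\cap\alpha_{B-2}$ and $\alpha_{B-1}\cap\alpha_B$.
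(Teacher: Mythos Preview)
Your overall strategy matches the paper's: localize $\alpha_j$ and $\alpha_k'$ in subtiles via Lemma~\ref{lem:descalpha}, eliminate most pairs by the first item of Lemma~\ref{lem:subd}, and treat the few surviving pairs by refining to deeper digits. The structure into the three subcases $j-(B-1-k)\in\{-1,0,1\}$ (plus the special indices $B-1,B$) is exactly what the paper does.

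However, your treatment of the equal-leading-digit case $j+k=B-1$ contains a concrete arithmetic error that makes the argument fail as written. You assert that ``$2A-B=3$ gives $A-2<B-A+1$,'' but in fact $2A-B=3$ gives $A-2=B-A+1$. Hence the second-digit range of $\alpha_j$, namely $\{0,\ldots,A-2\}$, and that of $\alpha_k'$, namely $\{B-A+1,\ldots,B-1\}$, are \emph{not} disjoint: they meet exactly at $A-2=B-A+1$. (In addition, even disjointness of ranges would not suffice: when the first digits agree, $\T_{ja_2}\cap\T_{ja_2'}\ne\emptyset$ whenever $|a_2-a_2'|\le 1$, not only when $a_2=a_2'$.) So the second-digit comparison alone cannot separate $\alpha_j$ from $\alpha_{B-1-j}'$.

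This is precisely why the paper, in proving $\alpha_1\cap\alpha_{B-2}'=\emptyset$, does not stop at the second digit but refines to depth three and four, writing
\[
\alpha_1\subset\Bigl(\bigcup_{a_1=0}^{A-3}\bigcup_{a_2=B-A}^{B-1}\T_{1a_1a_2}\Bigr)\cup\T_{1(A-2)(B-1)}\cup\T_{1(A-2)(B-2)0}
\]
and an analogous expression for $\alpha_{B-2}'$, then checking nine pairwise intersections are empty via Lemma~\ref{lem:subd}. The overlap at second digit $A-2=B-A+1$ is resolved only because at that value the \emph{third} digits of the two curves (namely $\{B-2,B-1\}$ versus $\{0,1\}$) are far apart. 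Your plan needs this extra level of refinement in the $j+k=B-1$ case; once that is supplied, the rest of your outline is correct and parallels the paper's proof (including relegating the case $j+k=B-2$ to the appendix).
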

\begin{proof}\emph{In the first part of the proof}, we suppose that $j,k\in\{1,\ldots,B-2\}$.

Since $\alpha_j\subset\T_j$, while $\alpha_k'\subset\T_{B-1-k}$, we deduce that 
$$\alpha_j\cap\alpha_k'=\emptyset 
$$
for all $j,k$ satisfying $|j-(B-1-k)|\notin\{0,1\}$.

Let us consider the case $j-(B-1-k)=0$, that is, $k=B-1-j$.  We prove the result for $j=1$, \emph{i.e.}, we show that  
$$\alpha_1\cap\alpha_{B-2}'=\emptyset.
$$
The same will hold for $\alpha_j\cap\alpha_{B-1-j}'$ for all $j\in\{2,\ldots,B-2\}$, since the sets $\alpha_j$ (resp. $\alpha_{B-1-j}$) are described by sets of points whose expansions  only differ from their first digit, $j$.

By Lemma~\ref{lem:descalpha}, 
$$\begin{array}{rcl}
\alpha_1&\subset&\left(\bigcup_{a_1=0}^{A-3}\bigcup_{a_2=B-A}^{B-1}\T_{1a_1a_2}\right)\;
\cup\;\T_{1(A-2)(B-1)}\;\cup\;\T_{1(A-2)(B-2)0}
\end{array}
$$
and 
$$\begin{array}{rcl}
\alpha_{B-2}'&\subset& \left(\bigcup_{a_1'=B-A+2}^{B-1}\bigcup_{a_2'=0}^{A-1}\T_{1a_1'a_2'}\right)\;\cup\;\T_{1(B-A+1)0}\;\cup\;\T_{1(B-A+1)1(B-1)}.
\end{array}
$$
We check that the sets on the right sides sets are pairwise disjoint, using Lemma~\ref{lem:subd}.
\begin{itemize}
\item[$(i)$] 
$$\left(\bigcup_{a_1=0}^{A-3}\bigcup_{a_2=B-A}^{B-1}\T_{1a_1a_2}\right)\;\cap\; \left(\bigcup_{a_1'=B-A+2}^{B-1}\bigcup_{a_2'=0}^{A-1}\T_{1a_1'a_2'}\right)=\emptyset,
$$
because $a_1'-a_1\geq B-A+2-(A-3)=2$ for all $a_1,a_1'$ showing up in these unions.

\item[$(ii)$] $\T_{1(A-2)(B-1)}\;\cap\;\left(\bigcup_{a_1=0}^{A-3}\bigcup_{a_2=B-A}^{B-1}\T_{1a_1a_2}\right)=\emptyset$. Indeed, since $a_1'\geq B-A+2$, we have that $|a_1'-(A-2)|\leq 1$ iff $a_1'=B-A+2$, and in this case the difference is $1$. Now, 
$$\T_{1(A-2)(B-1)}\cap \bigcup_{a_2'=0}^{A-1}\T_{1(B-A+2)a_2'}=\emptyset,
$$
because $a_1'-a_1=1$ but $a_2'-a_2\leq0$. 

\item[$(iii)$] 
$\T_{1(A-2)(B-2)0}\;\cap\;\left(\bigcup_{a_1=0}^{A-3}\bigcup_{a_2=B-A}^{B-1}\T_{1a_1a_2}\right)=\emptyset.
$
Similarly to Item~$(ii)$, we can restrict to the intersection
$$\T_{1(A-2)(B-1)0}\cap \bigcup_{a_2'=0}^{A-1}\T_{1(B-A+2)a_2'},
$$
which is empty, since $a_1'-a_1=1$, but $a_2'-a_2\leq A-1-(B-1)=4-A<A-2$ (we exclude $A=3=B$).

\item[$(iv)$]
$$\left(\bigcup_{a_1=0}^{A-3}\bigcup_{a_2=B-A}^{B-1}\T_{1a_1a_2}\right)\;\cap\; \T_{1(B-A+1)0}=\emptyset.
$$
Indeed, since $a_1\leq A-3$, we have that $|a_1-(B-A+1)|\leq 1$ iff $a_1=A-3$, and in this case the difference is $1$. Now, 
$$\bigcup_{a_2=B-A}^{B-1}\T_{1(A-3)a_2}\;\cap\;\T_{1(B-A+1)0} =\emptyset,
$$
because $a_1-a_1'=1$ but $a_2-a_2'\leq0$. 

\item[$(v)$]
$$\left(\bigcup_{a_1=0}^{A-3}\bigcup_{a_2=B-A}^{B-1}\T_{1a_1a_2}\right)\;\cap\; \T_{1(B-A+1)1(B-1)}=\emptyset.
$$
Similarly to Item~$(iv)$, we can restrict to the intersection
$$\bigcup_{a_2=B-A}^{B-1}\T_{1(A-3)a_2}\;\;\T_{1(B-A+1)1(B-1)},
$$
which is empty, since $a_1-a_1'=1$, but $a_2-a_2'\leq 1-(B-A)=4-A<A-2$.

\item[$(vi)$] $\T_{1(A-2)(B-1)}\cap\T_{1(B-A+1)0}=\emptyset$
since $A-2=(B-A+1)$ and $B-1-0\geq2$.

\item[$(vii)$] $\T_{1(A-2)(B-1)}\cap\T_{1(B-A+1)1(B-1)}=\emptyset$
since $A-2=(B-A+1)$ and $B-1-1=B-2\geq2$.

\item[$(viii)$] $\T_{1(A-2)(B-2)0}\cap\T_{1(B-A+1)0}=\emptyset$
since $A-2=(B-A+1)$ and $B-2-0=B-2\geq2$.

\item[$(ix)$] $\T_{1(A-2)(B-2)0}\cap\T_{1(B-A+1)1(B-1)}=\emptyset$
since $A-2=(B-A+1)$ and $B-2-1=B-3\geq2$.
\end{itemize}
Therefore, $\alpha_1\cap\alpha_{B-2}'=\emptyset$, and consequently $\alpha_j\cap\alpha_{B-1-j}'$ for all $j\in\{1,\ldots,B-2\}$, as mentioned earlier.  

Let us now consider the case $j-(B-1-k)=1$, that is, $k=B-j$. Again, it is sufficient to prove the result for $j=2$, then it will follow that $\alpha_j\cap\alpha_{B-j}'=\emptyset$ for all $j\in\{2,\ldots, B-2\}$. Since 
$$\alpha_{B-2}'\subset\bigcup_{a_2'=B-A+1}^{B-1}\T_{1a_2'}
$$
and 
$$\alpha_2\subset\bigcup_{a_2=0}^{A-2}\T_{2a_2},
$$
the intersection $\alpha_{B-2}'\cap\alpha_2$ is contained in sets $\T_{1a_2'}\cap\T_{2a_2}$ satisfying $a_2-a_2'\in\{A,A-1,A-2\}$ (see Lemma~\ref{lem:subd}). In particular, $a_2-a_2'>0$. However, we can read off from Lemma~\ref{lem:descalpha} that $a_2\leq A-2$ and $a_2'\geq B-A+1$, hence $a_2-a_2'\leq 2A-B-3=0$. Hence $\alpha_2\cap\alpha_{B-2}'=\emptyset$.

Let us finally consider the case $j-(B-1-k)=-1$, that is $k=B-2-j$. It is sufficient to prove the result for $j=1$, then  it follows that $\alpha_j\cap\alpha_{B-2-j}'=\emptyset$ holds for all $j\in\{1,\ldots, B-3\}$. The case $j=1$ can be found in Appendix~\ref{app:alphaalphaprimeemptyintersec}.

\emph{In the second part of the proof}, we deal with the remaining cases. 
Let us consider the case $j=B-1$ and $k\in\{1,\ldots, B\}$. Since $\alpha_{B-1}\subset\T_{B-1}$, $\alpha_{k}'\subset\T_{B-1-k}$ for $k\in\{1,\ldots,B-1\}$ and $\alpha_{B}'\subset\T_0$, we have 
$$\alpha_{B-1}\cap\alpha_k'=\emptyset
$$
for all $k\in\{2,\ldots,B\}$. We now prove that $\alpha_{B-1}\cap\alpha_1'=\emptyset$. We notice that
$$\alpha_{B-1}\subset\T_{(B-1)(A-2)}
$$
and 
$$\alpha_{1}'\subset\bigcup_{k=A-2}^{B-1}\T_{(B-2)k}.
$$
However, by Lemma~\ref{lem:subd}, $\T_{(B-1)(A-2)}\cap\T_{(B-2)k}$ for all $k\in\{A-2,\ldots,B-1\}$. Therefore, $\alpha_{B-1}\cap\alpha_1'=\emptyset$.

It follows immediately, by symmetry, that  
$$\alpha_{B-1}'\cap\alpha_k=\emptyset
$$
for all $k\in\{2,\ldots,B\}$.

Finally, since $\alpha_B\subset\T_{B-1}$, $\alpha_k'\subset\T_{B-1-k}$ for $k\in\{1,\ldots, B-1\}$ and $\alpha_{B}'\subset\T_0$, we have
$$\alpha_{B}\cap\alpha_k'=\emptyset
$$
for all $k\in\{2,\ldots,B\}$. And symmetrically, it follows that
$$\alpha_{B}'\cap\alpha_k=\emptyset
$$
for all $k\in\{2,\ldots,B\}$.
\end{proof}

\begin{lemma}\label{lem:circchain}
The curves $\alpha_1,\ldots,\alpha_B,\alpha_1',\ldots,\alpha_B'$ form a circular chain.
\end{lemma}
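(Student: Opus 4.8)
The idea is that the circular chain assembles almost entirely from facts already in hand, so that only the two ``junction'' intersections $\alpha_B\cap\alpha_1'$ and $\alpha_B'\cap\alpha_1$ remain to be computed. First I would spell out, for the cyclically ordered family $\alpha_1,\dots,\alpha_B,\alpha_1',\dots,\alpha_B'$ (which has length $2B\ge 10$, since $2A-B=3$ and $A\ne B$ force $A\ge 4$ and $B\ge 5$), exactly which pairs are cyclically consecutive: the pairs $(\alpha_i,\alpha_{i+1})$ and $(\alpha_i',\alpha_{i+1}')$ for $1\le i\le B-1$, together with $(\alpha_B,\alpha_1')$ and $(\alpha_B',\alpha_1)$. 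For the pairs among the $\alpha_i$, Lemma~\ref{lem:alphachain} already says that consecutive ones meet in exactly one point while non-consecutive ones (in particular $\alpha_1$ and $\alpha_B$, which are far apart in the cycle since $B\ge 5$) are disjoint; Lemma~\ref{lem:alphaprimchain} does the same for the $\alpha_i'$. For a mixed pair $\alpha_i\cap\alpha_j'$ that is not one of the two junctions we have $(i,j)\notin\{(B,1),(1,B)\}$, so Lemma~\ref{lem:alphaalphaprimeemptyintersec} gives $\alpha_i\cap\alpha_j'=\emptyset$. Thus the statement reduces to showing $\#(\alpha_B\cap\alpha_1')=\#(\alpha_B'\cap\alpha_1)=1$.

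Next I would invoke the flipping symmetry $a\leftrightarrow B-1-a$ of Section~\ref{sub:constrQ}, under which $\alpha_i\mapsto\alpha_i'$, to obtain $\alpha_B'\cap\alpha_1=0.\overline{B-1}-(\alpha_B\cap\alpha_1')$; this reduces the two junctions to one, say $\alpha_B\cap\alpha_1'$. For this I would argue exactly as in the proof of Lemma~\ref{lem:alphachain}. By Lemma~\ref{lem:descalpha}, $\alpha_B$ lies in a finite union of subtiles $\T_{a_1\dots a_m}$ all starting with digit $B-1$, with second digit in $\{B-A+1,\dots,B-1\}$; by Lemma~\ref{lem:descalpha} together with Remark~\ref{rem:descalphaprime}, $\alpha_1'$ lies in a finite union of subtiles all starting with $B-2$, again with second digit in $\{B-A+1,\dots,B-1\}$. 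Feeding the neighbour set~\eqref{eq:neighset34} into Lemma~\ref{lem:subd}, an intersection $\T_{(B-1)a_2\dots}\cap\T_{(B-2)a_2'\dots}$ forces $a_2-a_2'\in\{A-2,A-1,A\}$; since these digits differ by at most $(B-1)-(B-A+1)=A-2$, this leaves only $a_2=B-1$, $a_2'=B-A+1$. A further application of Lemma~\ref{lem:subd} pins the third digits to $0$ on the $\alpha_B$-side and $1$ on the $\alpha_1'$-side, and its last item then yields $\T_{(B-1)(B-1)0}\cap\T_{(B-2)(B-A+1)1}=\{0.(B-1)(B-1)0\,\overline{0(B-1)}\}$, a single point. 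Finally I would check, from the explicit descriptions in Lemma~\ref{lem:descalpha} and Table~\ref{tab:alpha}, that this point indeed lies in both curves: it is the endpoint $C(s_B)$ of $\alpha_B$ (the first singleton listed for $\alpha_B$), and it equals $0.(B-2)(B-A+1)1\,\overline{(B-1)0}$, the flip of the endpoint $C(t_1)=0.1(A-2)(B-2)\,\overline{0(B-1)}$ of $\alpha_1$, hence lies in $\alpha_1'$. Therefore $\alpha_B\cap\alpha_1'$, and by symmetry $\alpha_B'\cap\alpha_1$, is a single point, and the circular chain property follows.

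The only genuine work is the bookkeeping in the second paragraph: one must read off from the long formulas of Lemma~\ref{lem:descalpha} (and their images under the flip, via Remark~\ref{rem:descalphaprime}) precisely which prefixes $a_1a_2a_3$ occur in $\alpha_B$ and $\alpha_1'$, and then feed the correct neighbour constraints from~\eqref{eq:neighset34} into Lemma~\ref{lem:subd}; everything else is a direct assembly of previously established facts. As elsewhere in Section~\ref{sec:thnocp3}, a few of these routine subtile verifications would naturally be relegated to the appendix.
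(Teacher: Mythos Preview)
Your proposal is correct and follows essentially the same route as the paper: reduce to the two junction intersections via Lemmata~\ref{lem:alphachain}, \ref{lem:alphaprimchain}, \ref{lem:alphaalphaprimeemptyintersec}, use the flip symmetry to cut down to $\alpha_B\cap\alpha_1'$, and then combine the digit constraints from Lemma~\ref{lem:subd} with the explicit descriptions of Lemma~\ref{lem:descalpha} (and Remark~\ref{rem:descalphaprime}) to isolate the single intersection point $0.(B-1)(B-1)0\,\overline{0(B-1)}=0.(B-2)(B-A+1)1\,\overline{(B-1)0}$. One small remark: the sentence ``a further application of Lemma~\ref{lem:subd} pins the third digits to $0$ and $1$'' is slightly imprecise, since Lemma~\ref{lem:subd} by itself only gives $a_3-a_3'=-1$ (and then the periodic tail $\overline{0(B-1)}$); the specific values $a_3=0$, $a_3'=1$ come from matching this against the prefixes that actually occur in $\alpha_B$ and $\alpha_1'$, which is precisely the bookkeeping you flag at the end.
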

\begin{proof} We prove that 
$$\alpha_B\cap\alpha_1'=\{0.(B-1)(B-1)0\overline{0(B-1)}\}=\{0.(B-2)(A-2)1\overline{(B-1)0}\},
$$
$$\alpha_B'\cap\alpha_1=\{0.00(B-1)\overline{(B-1)0}\}=\{0.1(A-2)(B-2)\overline{0(B-1)}\}
$$
and the result then follows from Lemmata~\ref{lem:alphachain},~\ref{lem:alphaprimchain} and~\ref{lem:alphaalphaprimeemptyintersec}.

 Note that a convenient  expression for $\alpha_1'$ and $\alpha_B'$ can be obtained as stated in Remark~\ref{rem:descalphaprime}. 

We start with the first intersection. Let  $0.(B-2)a_2'a_3'\cdots\in\alpha_1'$ and $0.(B-1)a_2a_3\cdots\in\alpha_B$ with expansions of the form given in Lemma~\ref{lem:descalpha} and Remark~\ref{rem:descalphaprime}. Then, $a_2\leq B-1$ and $a_2'\geq B-A+1=A-2$, thus $a_2-a_2'\leq A-2$. By Lemma~\ref{lem:subd}, for these expansions to be equal we must have $a_2-a_2'\in\{A,A-1-A-2\}$, therefore we can deduce that $a_2-a_2'=A-2$, which happens only for $a_2=B-1$ and $a_2'=A-2$. Furthermore,
again by Lemma~\ref{lem:subd}, we must have $a_4a_5a_6a_7\cdots=\overline{0(B-1)}$. One easily checks in the expression of $\alpha_B$ in Lemma~\ref{lem:descalpha} that the only expansion of this kind is that of 
$$0.(B-1)(B-1)0\overline{0(B-1)}=0.(B-2)(B-A+1)1\overline{(B-1)0}.
$$
It follows that 
$$
\alpha_B\cap\alpha_1'=\{0.(B-1)(B-1)0\overline{0(B-1)}\}.
$$

For the second intersection, we just use the definition of $\alpha_j'$ for $j\in\{1,B\}$: it is obtained from $\alpha_j$ by changing every digit $a$ to $B-1-a$. Hence the result follows from the computation  of $\alpha_B\cap\alpha_1'$.
\end{proof}

\subsection{For each $i\in\{1,\ldots, B-2\}$, $\gamma_i$ is a simple arc on $\partial \T$ going through the point $0.i\overline{A-2}$ and with endpoints on $Q'$}\label{sub:gamma}
 Note that $\beta_{B-1}$ and $\beta_{B}$ are two consecutive simple arcs of a chain, with intersection point $0.(B-1)\overline{A-2}$ (see Table~\ref{tab:alpha}). The endpoints of the simple arc $\beta_{B-1}\cup\beta_{B}$ are 
$$0.(B-1)(A-2)(B-2)\overline{0(B-1)},\;0.(B-1)(B-1)0\overline{0(B-1)}.
$$
Therefore, by construction, for each $i\in\{1,\ldots, B-2\}$, the curve $\gamma_i$ is a simple arc, going through the point $0.i\overline{A-2}$ and having endpoints
$$0.i(A-2)(B-2)\overline{0(B-1)},\;0.i(B-1)0\overline{0(B-1)}.
$$
As can be seen on Table~\ref{tab:alpha}, the points $0.i(A-2)(B-2)\overline{0(B-1)}$ are endpoints of $\alpha_i$, hence of $\beta_i$ also, and thus belong to $Q'$. Also, as the points $0.i0(B-1)\overline{(B-1)0}$ are endpoints of $\alpha_i$, the  following points,
$$0.(B-1-i)(B-1)0\overline{0(B-1)},
$$
 obtained from the former ones by exchanging the digits $a\leftrightarrow B-1-a$, are endpoints of $\alpha_{B-1-i}'$, hence of $\beta_{B-1-i}'$ and thus belong to $Q'$. Consequently, the endpoints of $\gamma_i$ all lie on $Q'$.

Finally, we claim that $\gamma_i\subset\partial\T$ for each $i\in\{1,\ldots, B-2\}$. Let us observe that 
$$\beta_{B-1}\subset\alpha_{B-1}\subset0.(B-1)[K_2]\subset\partial \T.
$$
The inclusion for $\alpha_{B-1}$  can be checked from Table~\ref{tab:alpha} and the automaton on the right side of Figure~\ref{CNS_DTautomaton}. We deduce from the same automaton that 
$0.i[K_2]\subset K_6\subset\partial\T$ for $i\in\{0,\ldots,B-A\}$ and $0.i[K_2]\subset K_5\subset\partial\T$
 for $i\in\{B-A+1,\ldots,B-1\}$. 
It follows that 
$$f_if_{B-1}^{-1}(\beta_{B-1})\subset0.i[K_2]\subset\partial\T
$$ for all  $i\in\{1,\ldots, B-2\}$.

In the same way, we can observe that 
$$\beta_{B}\subset\alpha_{B}\subset0.(B-1)[K_4]\cup0.(B-1)[K_5]\subset\partial\T.
$$
Moreover, $0.i[K_4]\subset K_2\subset\partial\T$ for $i\in\{0,\ldots,A-1\}$, $0.i[K_4]\subset K_3\subset\partial\T$
 for $i\in\{A,\ldots,B-1\}$, and further that $0.i[K_5]\subset K_2\subset\partial\T$ for $i\in\{0,\ldots,A-2\}$, $0.i[K_5]\subset K_3\subset\partial\T$  for $i\in\{A-1,\ldots,B-1\}$.
It follows that 
$$f_if_{B-1}^{-1}(\beta_{B})\subset0.i[K_4]\cup0.i[K_5]\subset\partial\T
$$ for all  $i\in\{1,\ldots, B-2\}$.
Therefore, $\gamma_i\subset\partial\T$ for all $i\in\{1,\ldots, B-2\}$.

\subsection{$\T$ has no cut point for $2A-B=3$, $A\ne B$}\label{sub:checkassu}

\begin{theorem}\label{th:ii3} Let $M\in\mathbb{Z}^{2\times 2}$ with  characteristic polynomial $x^2+Ax+B$, where $0\leq A\leq B\geq 2$.  Let $\Di=\{0,v,2v,\ldots,(B-1)v\}$ for some $v\in\mathbb{Z}^2$ such that $v,M v$ are linearly independent. Denote by $\T=\T(M,\Di)$ the associated self-affine tile. Suppose that $2A-B=3$ and $A\ne B$. Then $\T$ has no cut point but its interior is disconnected. The closure of each connected component of the interior of $\T$ is homeomorphic to the closed disk.
\end{theorem}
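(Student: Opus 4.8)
The plan is to apply the criterion of Ngai and Tang, Theorem~\ref{th:nt04}, to the iterated function system $\{f_a\}_{a\in\Di}$, $f_a(x)=M^{-1}(x+a)$, whose attractor is $\T$ and which satisfies the open set condition with $U=\interior\T$. Since $Q\subset\partial\T\subset\T$, the theorem applies once we know that $(a)$ $Q$ is connected and has no cut point, and $(b)$ $\#\bigl(f_a(Q)\cap Q\bigr)\ge 2$ for every digit $a\in\{0,1,\ldots,B-1\}$. Granting $(a)$ and $(b)$, Theorem~\ref{th:nt04} yields that $\T$ is connected, has no cut point, and that the closure of every connected component of $\interior\T$ is homeomorphic to a closed disk. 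The disconnectedness of $\interior\T$ then comes for free: were $\interior\T$ connected, $\T=\overline{\interior\T}$ would itself be homeomorphic to a closed disk, which is impossible by the theorem of Leung and Lau recalled in Theorem~\ref{th:cpnocp}$(i)$, since here $2A-B=3>2$.

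For $(a)$: by Lemma~\ref{lem:circchain} the curves $\alpha_1,\ldots,\alpha_B,\alpha_1',\ldots,\alpha_B'$ form a circular chain, and by construction each $\beta_i\subset\alpha_i$ (resp.\ $\beta_i'\subset\alpha_i'$) is a simple arc having the same endpoints as $\alpha_i$ (resp.\ $\alpha_i'$). Passing from the $\alpha$'s to the $\beta$'s creates no new intersections and keeps the consecutive contact points, so $\beta_1,\ldots,\beta_B,\beta_1',\ldots,\beta_B'$ is again a circular chain, now of arcs glued along their endpoints; hence its union $Q'$ is a simple closed curve, in particular a connected set with no cut point. By Section~\ref{sub:gamma}, for $i\in\{1,\ldots,B-2\}$ the set $\gamma_i\subset\partial\T$ is a simple arc whose two endpoints $0.i(A-2)(B-2)\overline{0(B-1)}$ and $0.i(B-1)0\overline{0(B-1)}$ are distinct points of $Q'$ (distinctness follows from $A\ne B$, so that $A-2\ne B-1$, together with Lemma~\ref{lem:subd}). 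Therefore $Q=Q'\cup\bigcup_{i=1}^{B-2}\gamma_i$ is a Peano continuum, and it has no cut point: removing a point $x\in Q'$ leaves the connected set $Q'\setminus\{x\}$ with all the $\gamma_i$ still attached to it, while removing an interior point $x$ of some $\gamma_i$ leaves $Q'$ intact and the two subarcs of $\gamma_i\setminus\{x\}$ still attached to it; in either case $Q\setminus\{x\}$ is connected.

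The substance of the proof is $(b)$. Fix a digit $a$; then $f_a(Q)=\{0.a\,b_1b_2\ldots:0.b_1b_2\ldots\in Q\}\subset\T_a$, so that $f_a(Q)\cap Q\subset\T_a\cap Q$. I would first read off $\T_a\cap Q$ from Lemma~\ref{lem:descalpha} and Remark~\ref{rem:descalphaprime}: for $a\in\{1,\ldots,B-2\}$ it is the arc $\beta_a\cup\gamma_a\cup\beta'_{B-1-a}$ (together with the seam points shared with $\T_{a\pm1}$), and for $a\in\{0,B-1\}$ it is the corresponding union of bottom and flipped arcs. Then, for each $a$, I would exhibit explicitly at least two common points of $f_a(Q)$ and $Q$ — in fact a whole common subarc. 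For the interior digits $a\in\{1,\ldots,B-2\}$ the natural candidate is $\gamma_a$ itself: since $\gamma_a=f_a\bigl(f_{B-1}^{-1}(\beta_{B-1}\cup\beta_B)\bigr)$, one has $\gamma_a\subset f_a(Q)$ as soon as $f_{B-1}^{-1}(\beta_{B-1}\cup\beta_B)\subset Q$, and the same inclusion handles $a=B-1$ (giving $\beta_{B-1}\cup\beta_B\subset f_{B-1}(Q)\cap Q$); the digit $a=0$ is treated symmetrically through the flip $\alpha_i'=0.\overline{B-1}-\alpha_i$, using that $0.\overline{A-2}=0.(A-2)\overline{A-2}\in\gamma_{A-2}\subset Q$ (which makes sense because $2A-B=3$ and $A\ne B$ force $A\ge 4$, hence $A-2\in\{1,\ldots,B-2\}$). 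Each of these inclusions is verified by writing down the symbolic expansions of both sides from Lemma~\ref{lem:descalpha}, Remark~\ref{rem:descalphaprime} and the identification Remark~\ref{rem:eqpoints}, and checking with the neighbour list of Lemma~\ref{lem:subd} that the pulled-back pieces land inside the specific curve $Q$ and that no spurious further intersections arise. This last point — controlling not merely that a pulled-back arc lies on $\partial\T$ but that it lies on $Q$ — is the technical heart of the argument and the step I expect to be the most delicate, since it uses the fine structure of $Q$ from Lemma~\ref{lem:descalpha} rather than just the chain combinatorics that sufficed for $(a)$.
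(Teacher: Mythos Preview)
Your treatment of (a) is fine and matches the paper. The gap is in (b), and it is a real one. Your plan for the second common point is to show that a whole subarc --- essentially $\gamma_a$ --- lies in $f_a(Q)\cap Q$, which you reduce to the inclusion $f_{B-1}^{-1}(\beta_{B-1}\cup\beta_B)\subset Q$. You correctly flag this as ``the technical heart'' but do not carry it out, and it is far from a routine bookkeeping check: $\beta_{B-1}$ and $\beta_B$ are only \emph{some} simple arcs in $\alpha_{B-1}$ and $\alpha_B$, not canonically specified, and while $f_{B-1}^{-1}(\alpha_{B-1})\subset K_2\subset\partial\T$, there is no reason a priori that this pulled-back arc lands inside the particular subset $Q\subsetneq\partial\T$. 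Lemma~\ref{lem:descalpha} describes $\alpha_i$, not $\beta_i$, so the symbolic verification you propose cannot be performed as stated.

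The paper avoids this difficulty entirely. It keeps only the easy half of your idea --- the single explicit point $P_a=0.a\,\overline{A-2}=f_a(S)$ lies in $f_a(Q)\cap Q$ for every $a$ (because $S=0.\overline{A-2}\in\gamma_{A-2}\subset Q$ and $P_a\in\gamma_a$ or is an endpoint of $\beta_{B-1}$, $\beta_{B-1}'$) --- and gets the \emph{second} point by a short topological argument rather than computation: since $Q'$ is a simple closed curve with symmetry centre $S=\tfrac12\,0.\overline{B-1}$, the point $S$ lies in the bounded Jordan component $B_c$ of $\mathbb{R}^2\setminus Q'$; hence $P_a=f_a(S)$ lies in the bounded component of $\mathbb{R}^2\setminus f_a(Q')$. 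Because $f_a$ is a contraction, some point of $Q'$ lies outside $f_a(B_c)$, and any path in $Q$ from that point to $P_a$ must cross $f_a(Q')\subset f_a(Q)$, producing a second intersection point. This Jordan-curve / contraction trick replaces your unverified inclusion and is what you are missing.
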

\begin{proof}  We check that $Q\subset\partial \T$ satisfies the assumptions of Theorem~\ref{th:nt04}. By construction, $Q$ is connected. It has  no cut point because $Q'$ is a simple closed curve and the $\gamma_i$'s ($i\in\{1,\ldots, B-2\}$) are simple arcs with endpoints on $Q'$. Moreover, for each $i\in\{0,\ldots,B-1\}$, $f_i(Q)\cap Q$ contains the point
$$P_i=0.i\overline{A-2}=f_i(0.\overline{A-2}).
$$
This holds because $S=0.\overline{A-2}\in Q$ and, for $i\in\{1,\ldots, B-2\}$, $0.i\overline{A-2}\in\gamma_i\subset Q$. For $i=B-1$, the point $0.(B-1)\overline{A-2}$ is an endpoint of $\beta_{B-1}$ by construction, thus it belongs to $Q$. For $i=0$, the point $0.0\overline{A-2}=0.\overline{B-1}-0.(B-1)\overline{A-2}$ is an endpoint of $\beta_{B-1}'$, thus it also belongs to $Q$.

We finally find another point in this intersection\footnote{Most of the following argument can be found in~\cite{NgaiTang04} for the special case $A=4,B=5$}. By construction, the simple closed curve $Q'$ is the union of two simple arcs meeting at their endpoints, namely $\bigcup_{i=1}^B\beta_i$ and $\bigcup_{i=1}^B\beta_i'=0.\overline{B-1}-\bigcup_{i=1}^B\beta_i$. It follows that $Q'$ admits 
$$\frac{1}{2}0.\overline{B-1}=0.\overline{A-2}=S
$$
as a symmetry center. 
Hence we conclude that $S$ lies in the bounded component $B_c$ of $\mathbb{R}^2\setminus Q'$. Therefore, $f_i(S)=0.i\overline{A-2}\in Q$ lies in the bounded component $f_i(B_c)$ of $\mathbb{R}^2\setminus f_i(Q')$. Since $f_i$ is a contraction and $Q'=\partial B_c$,  there exists a point $z\in Q'\setminus f_i(B_c)$. Now, as $Q$ is arcwise connected, there is a path in $Q$ from $z$ to $f_i(S)$. This path must intersect $\partial f_i(B_c)=f_i(Q')$, providing a point in $Q\cap f_i(Q)$, different from $f_i(S)$.


It follows from Theorem~\ref{th:nt04} that $\T$ has no cut point for $2A-B=3$, $A\ne B$ and that the closure of each interior component of $\T$
is a topological disk.
\end{proof}

\begin{remark} Note that the disk-likeness of the closure of the interior components follows from the following theorem of Torhorst.
\begin{lemma}\cite[$\S$61, II, Theorem 4]{Kuratowski68}\label{thorhorst}
Let $M\subset \mathbb{S}^2 $ be a locally connected continuum
having no cutpoint and $R$ a component of $M^c$. Then
$\overline{R}$ is homeomorphic to a disk.
\end{lemma}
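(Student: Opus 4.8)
The plan is to reduce the assertion to the single statement that the frontier $J:=\operatorname{Fr}(R)$ of the complementary domain $R$ is a simple closed curve, and then to invoke the Jordan curve theorem together with the Schoenflies theorem. Indeed, once $J$ is known to be a simple closed curve, $\mathbb{S}^{2}\setminus J$ consists of exactly two components; the open connected set $R$ is disjoint from $J$ and satisfies $\operatorname{Fr}(R)=J$, so $R$ must coincide with one of these two Jordan domains. By the Schoenflies theorem there is then a homeomorphism of $\mathbb{S}^{2}$ carrying $J$ to a round circle and $R$ to an open hemisphere; it therefore carries $\overline{R}$ onto a closed hemisphere, which is a closed disk. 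Hence everything comes down to showing that $J$ is a simple closed curve.

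Here I would use two inputs. First, since $M$ is connected, $\mathbb{S}^{2}\setminus R$ is connected (it is the union of $M$ with the closures of the remaining complementary domains of $M$, each of which meets $M$), so $R$ is simply connected. Second, and more importantly, the fact that $M$ is a Peano continuum forces $J$ to be a Peano continuum as well, by the classical theorem of Torhorst on frontiers of complementary domains of locally connected continua; moreover every point of $J$ is accessible from $R$, and $J$ is nowhere dense in $\mathbb{S}^{2}$, being the frontier of an open set, hence at most one-dimensional. It remains to upgrade ``$J$ is a Peano continuum'' to ``$J$ is a simple closed curve'', and this is exactly the place where the hypothesis that $M$ has no cut point enters.

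The tool for this upgrade is Whyburn's theory of cyclic elements: a Peano continuum has no cut point if and only if it is \emph{cyclicly connected}, that is, any two of its points lie together on a simple closed curve contained in it. The crucial step --- which I expect to be the main obstacle --- is to transfer this to $J$, namely to show that $J$ has neither a cut point nor a local separating point. The naive implication ``a (local) separating point of $J$ is a (local) separating point of $M$'' is false, since $M$ may be strictly larger than $J$, so planarity must be used. If $p\in J$ separated $J$ into relatively closed sets $F_{1},F_{2}$, one would connect a point of $F_{1}$ to a point of $F_{2}$ by an arc whose interior lies in the open connected set $R$ (using that boundary points of $R$ are accessible from $R$), close this arc up to a simple closed curve with the help of the cyclic connectedness of $M$, and then read off from the Jordan curve theorem a connected subset of $J\setminus\{p\}$ meeting both $F_{1}$ and $F_{2}$, a contradiction; the argument excluding local separating points is the same, carried out inside a small disk around $p$. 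Granting this, $J$ is a nowhere dense Peano continuum in $\mathbb{S}^{2}$ with neither cut point nor local separating point, and such a continuum is a simple closed curve by the classical characterization of the circle among planar Peano continua. Together with the first paragraph this finishes the proof.
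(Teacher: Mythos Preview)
The paper does not prove this lemma at all: it is simply quoted, with attribution, from Kuratowski's \emph{Topology} (\S 61, II, Theorem~4), and used as a black box in the remark following Theorem~\ref{th:ii3}. So there is no ``paper's own proof'' to compare against; you have supplied a proof where the authors chose to cite one.

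That said, your outline is essentially the classical argument behind the Torhorst theorem. The reduction to ``$J=\operatorname{Fr}(R)$ is a simple closed curve'' via Schoenflies is exactly right, and invoking the basic Torhorst theorem to get that $J$ is itself a Peano continuum, together with accessibility of every boundary point from $R$, is the standard opening. The one place where your sketch is looser than it needs to be is the passage from ``$J$ is a planar Peano continuum with no cut point'' to ``$J$ is a simple closed curve'': you do not actually need the detour through local separating points. Since $M$ is connected, $\mathbb{S}^{2}\setminus \overline{R}$ is connected, so $J$ is the \emph{common} frontier of two domains; a nondegenerate Peano continuum in $\mathbb{S}^{2}$ with no cut point which is the common boundary of two complementary domains is a simple closed curve (this is the converse of the Jordan curve theorem, also in Kuratowski and in Whyburn). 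Your accessibility-and-cyclic-connectedness argument for ruling out cut points of $J$ is on the right track but, as you yourself flag, the step ``read off from the Jordan curve theorem a connected subset of $J\setminus\{p\}$ meeting both $F_{1}$ and $F_{2}$'' needs to be made precise; the clean way is to note that the simple closed curve you build (arc through $R$ together with an arc in $M\setminus\{p\}$, available because $M$ is cyclicly connected) separates $\mathbb{S}^{2}$, and $p$ lies in one of its complementary Jordan domains, forcing the piece of $J$ near $p$ to lie in a single component---contradiction.
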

\end{remark}
\end{section}

\begin{section}{$\T$ has no cut point for $2A-B=4$}\label{sec:thnocp4} 

The detailed proof of this case will be published in a separate paper~\cite{Loridant0000}. We list in this section the two  main differences between the cases $2A-B=3$ and $2A-B=4$. 

\begin{itemize}
\item There are significant changes in Lemma~\ref{lem:subd}, describing the intersecting natural subdivisions of the IFS-attractor $\T$. In particular, in the case $2A-B=3$, it follows from Lemma~\ref{lem:subd} that every intersection $\T_{a_1a_2}\cap\T_{a_1'a_2'}$ contains exactly $B-1$ points as soon as $a_1-a_1'=1$ and $a_2-a_2'=A-2$. These points build bridges between subdivisions of $\T$. These bridges are ``far enough from each other'' for the curve $Q'\subset Q$ of Section~\ref{sub:constrQ} to be self-avoiding.  Whenever $2A-B=4$, a lemma  analogous to Lemma~\ref{lem:subd} shows that every intersection $\T_{a_1a_2}\cap\T_{a_1'a_2'}$ contains infinitely many points as soon as $a_1-a_1'=1$ and $a_2-a_2'=A-2$. This involves changes in the construction of the curve $Q$   in order to make the subcurve $Q'$ be a simple closed curve.   
\item In the case $2A-B=3$, we have $(B-1)/2=A-2$ and the symmetry point $\frac{1}{2}0.\overline{B-1}$ of $\T$ has the alternative expansion $0.\overline{A-2}$. This point therefore belongs to the curve $Q$ and is in the bounded component of $\mathbb{R}^2\setminus Q'$ (see the proof of Theorem~\ref{th:ii3}). This alternative expansion is not available whenever $2A-B=4$. Consequently, for $Q$ to satisfy  $\# f_i(Q)\cap Q\geq 2$  for all $i\in\{1,\ldots,B\}$ (see assumption of Theorem~\ref{th:nt04}), our idea  will be to add more simple arcs to the subcurve $Q'$ (see Section~\ref{sub:constrQ}).  
\end{itemize}

\end{section}

\newpage
\appendix
\section{Complements to the proof of Lemma~\ref{lem:alphachain}}\label{app:alphachain} Here $2A-B=3$ and $A\ne B$. We use again frequently Lemmata~\ref{lem:subd} and~\ref{lem:descalpha}.
\subsection{Proof that $\alpha_B\cap\alpha_{B-2}=\emptyset$.}
We have $\alpha_{B-2}\subset\T_{B-2}$ and $\alpha_{B}\subset\T_{B-1}$. Suppose that $0.(B-2)a_2'a_3'\cdots[K']\cap0.(B-1)a_2a_3[K]\ne\emptyset$ for some subsets of $\alpha_{B-2}$ and $\alpha_{B}$ listed in Lemma~\ref{lem:descalpha}. By Lemma~\ref{lem:subd}, we necessarily have $a_2-a_2'\in\{A,A-1,A-2\}$. However, if $a_2-a_2'=A$, then $a_3'=0$, a contradiction to Lemma~\ref{lem:descalpha}. Hence $a_2-a_2'\in\{A-1,A-2\}$. In view of Lemma~\ref{lem:descalpha}, only the following cases meet this condition. 
\begin{itemize}
\item[$(i)$]$a_2=B-1,a_2'=A-2$, hence $a_2-a_2'=A-2$. Then, either $a_3=0, a_3'\in\{B-1,B-2\}$, and $a_3-a_3'\ne-1$, contradicting Lemma~\ref{lem:subd}. Or $a_3\in\{1,\ldots,A-1\}$ and $a_4\geq A-3$, contradicting Lemma~\ref{lem:subd} again.
\item[$(ii)$] $a_2=B-1,a_2'=A-3$, that is, we consider the subsets $0.(B-2)(A-3)[K_4\cup K_5]\subset\alpha_{B-2}$. Here, $a_2-a_2'=A-1$. If $a_3=0$, then $a_3-a_3'\leq 0$, contradicting Lemma~\ref{lem:subd}. Hence we are left with the subsets $0.(B-1)(B-1)[K_1\cup K_2]\subset\alpha_B$.  For the subsets under consideration, we have $a_3-a_3'\leq A-1-(A-3)=2$. By Lemma~\ref{lem:subd}, we are interested in the cases where $a_3-a_3'\in\{B-A,B-A+1,B-A+2\}$. This leads to three cases. Either $a_3=A-1,a_3'=A-2$, hence $a_3-a_3'=1$. Thus $B-A=1$, that is, $A=4,B=5$ and $a_4=0$, contradicting $a_4\geq A-3$ (Lemma~\ref{lem:subd}); or $B-A+1=1$, that is, $A=B$, which is not considered here. Or $a_3=A-1,a_3'=A-3$, hence $a_3-a_3'=2=B-A+2$ and $A=B$, which is not considered here. Or $a_3=A-2,a_3'=A-3$, hence $a_3-a_3'=1$, which brings back to the first case.
\item[$(iii)$]$a_2=A-2,a_2'=0$. Then $a_2-a_2'=A-2$ but from Lemma~\ref{lem:descalpha} we read that $a_4\ne0$, contradicting Lemma~\ref{lem:subd}.
\item[$(iv)$]$a_2\notin\{B-1,A-2\}$, that is, we consider the subsets $0.(B-1)a_2[K_1\cup K_2]\subset\alpha_B$ with $a_2\in\{A-1,\ldots,B-2\}$. This implies that $a_2'\in\{0,\ldots,A-3\}$ in order to have $a_2-a_2'\in\{A-1,A-2\}$. If $a_2'=0$, then only $a_2=A-1$ has to be considered. In this case, if $a_3'=B-1$, then $a_3-a_3'\leq 0$, which contradicts Lemma~\ref{lem:subd}. Therefore, we are left with the subsets $0.(B-2)a_2'[K_4\cup K_5]$ with $a_2'\in\{0,\ldots,A-3\}$. If now $a_2-a_2'=A-2$, then $a_4=0$ by Lemma~\ref{lem:subd}, a contradiction to the fact that $a_4\geq A-3$ for the subsets of $\alpha_B$ under consideration. On the other side, if $a_2-a_2'=A-1$, the situation is analogous to that of Item~$(ii)$ and this leads to a contradiction.
\end{itemize}
Therefore, $\alpha_B\cap\alpha_{B-2}=\emptyset$.

\subsection{Proof that $\alpha_B\cap\alpha_{B-1}=\{0.(B-1)\overline{A-2}\}$.} The curves $\alpha_{B-1}$ and $\alpha_B$ are both subsets of $\T_{B-1}$. From $2A-B=3$ it follows that $A-2=B-A+1$ and by Lemma~\ref{lem:subd} the point $0.(B-1)\overline{A-2}$ belongs to $\alpha_{B-1}\cap\alpha_B$. 

We will now show that $0.(B-1)(A-2)a_2'a_3'\cdots[K']\cap0.(B-1)a_1a_2a_3\cdots[K]=\emptyset$ for all the subsets of $\alpha_{B-1}$ and $\alpha_{B}$ listed in Lemma~\ref{lem:descalpha}. Let us suppose that this is not the case. This is equivalent to assuming that $0.(A-2)a_2'a_3'\cdots[K']\cap0.a_1a_2a_3\cdots[K]\ne\emptyset$. We review all such pairs of sets occurring in $\alpha_{B-1}$ and $\alpha_B$. Firstly, we consider the cases where $a_1\ne A-2$.
\begin{itemize}
\item[$(i)$] If  $a_1\in\{A,\ldots,B-1\}$, then $a_1-(A-2)\geq 2$ (because $A\ne B$ implies $A\geq 4$), and this contradicts Lemma~\ref{lem:subd}.  
\item[$(ii)$] If $a_1=A-1$, that is, we consider the subset $0.(A-1)[K_1\cup K_2]$, then $a_1-(A-2)=1$. However, we can see from Lemma~\ref{lem:descalpha} that $a_2\leq A-1$ and $a_2'\geq A-2$, thus $a_2-a_2'\leq 1$, contradicting Lemma~\ref{lem:subd}. 
\end{itemize}

Secondly, we come to the cases where $a_1=A-2$. Then our assumption is equivalent to $0.a_2'a_3'\cdots[K']\cap0.a_2a_3\cdots[K]\ne\emptyset$. Let us start with all the cases where $a_2'\ne A-2$.
\begin{itemize}
\item[$(i)$] If $a_2'\geq A-1$ and $a_2\leq A-3$, then $a_2-a_2'\leq -2$, contradicting the first item of Lemma~\ref{lem:subd}.
\item[$(ii)$] If $a_2'\geq A$ and $a_2=A-2$, then $a_2-a_2'\leq -2$, contradicting the first item of Lemma~\ref{lem:subd}.
\item[$(iii)$] If $a_2'=A-1$ and $a_2=A-2$, then $a_2'-a_2=1$. One checks on the expression of $\alpha_B$ that if $a_2=A-2$, then $a_3\geq A-2$. Moreover, the only subsets satisfying $a_2'=A-1$ are $0.(A-1)[K_1\cup K_2]$ and, in the case $A=4,B=5$, the subsets with $a_2'=B-2$. In both cases,  $a_3'\leq A-1$, but since $a_3\geq A-2\geq 2$, we obtain that  $a_3'-a_3\leq A-3$, contradicting Lemma~\ref{lem:subd}.  
\end{itemize}
We go on with the cases where $a_2'=A-2$. If $a_2=0$, then $a_2'-a_2=A-2\geq2$ and this contradicts Lemma~\ref{lem:subd}. Otherwise, $a_2=A-2=a_2'$ and our assumption is equivalent to $0.a_3'\cdots[K']\cap0.a_3\cdots[K]\ne\emptyset$. We list the different cases below. The sets are chosen according to Lemma~\ref{lem:descalpha}, starting from the digits $a_3$ and $a_3'$, since we erased $B-1$, $a_1=a_1'=A-2$ and $a_2=a_2'=A-2$.
\begin{itemize}
\item[$(i)$] Let $p\geq 0$ and $S:=0.((A-2)(A-2))^p(A-2)[K_1]\subset\T_{A-2}$.  We apply Lemma~\ref{lem:subd}. Since $S_1:=0.[K_1]\subset\T_{0}$, we have $S\cap S_1=\emptyset$. Also, $S$ does not intersect $S_2:=0.(A-2)((A-2)(A-2))^m(A-2)[K_1]$ for $m\geq0$. Indeed, if $p\geq m$, then $S\cap S_2\ne\emptyset$ iff $0.((A-2)(A-2))^{p-m}(A-2)[K_1]\cap 0.(A-2)(A-2)[K_1]\ne\emptyset$, implying that $\T_{(A-2)^{2(p-m)}0}\cap\T_{(A-2)0}\ne\emptyset$; but this does not occur by the first item of Lemma~\ref{lem:subd}.  And if $p<m$, then $S\cap S_2\ne\emptyset$ would imply that  $\T_{0}\cap\T_{(A-2)^{2(m-p)+1}}\ne\emptyset$, which again does not occur by Lemma~\ref{lem:subd}.  In the same way, we can prove that $S$ does not intersect the sets $S_3:=0.(A-2)((A-2)(A-2))^mk[K_1\cup K_2]$ ($0\leq k\leq A-3,m\geq 0$), $S_4:=0.((A-2)(A-2))^mk[K_4\cup K_5]$ ($0\leq k\leq A-3,m\geq 1$) and $S_5:=0.((A-2)(A-2))^m(A-2)[K_4]$ ($m\geq 1$).

\item[$(ii)$] Similarly as in Item~$(i)$, taking successively for $S$ the sets 
$$\begin{array}{l}
0.((A-2)(A-2))^pk[K_1\cup K_2] \;(A-1\leq k\leq B-1, p\geq0), \\
0.(A-2)((A-2)(A-2))^pk[K_4\cup K_5]\; (0\leq k\leq A-3, p\geq0), \textrm{ and}\\
0.(A-2)((A-2)(A-2))^p(A-2)[K_4] \;(p\geq0),
\end{array}
$$ 
one can check that $S\cap S_j=\emptyset$ for all the sets $S_j$ ($j\in\{1,\ldots,5\}$) defined in the above item.

\end{itemize}
One finally checks with the help of Lemma~\ref{lem:subd} that the point $0.(B-1)(A-2)(B-2)\overline{0(B-1)}$ does not lie on $\alpha_B$, and that the point $0.(B-1)(B-1)0\overline{0(B-1)}$ does not lie on $\alpha_{B-1}$. 

Therefore, $\alpha_B\cap\alpha_{B-1}=\{0.(B-1)\overline{A-2}\}$.

\section{Complements to the proof of Lemma~\ref{lem:alphaalphaprimeemptyintersec}}\label{app:alphaalphaprimeemptyintersec} Here $2A-B=3$ and $A\ne B$. \textbf{We prove that} $\alpha_1\cap\alpha_{B-3}'=\emptyset$. By Remark~\ref{rem:descalphaprime}, the description of $\alpha_{B-3}'$  as in Lemma~\ref{lem:descalpha} is obtained from the description of $\alpha_2$ by interchanging the digits $a\leftrightarrow B-1-a$ and the sets $K_1\leftrightarrow K_4,K_2\leftrightarrow K_5,K_3\leftrightarrow K_6$. We deduce from these descriptions that
$$\begin{array}{rcl}
\alpha_1&\subset&\bigcup_{k=B-A}^{B-1}\T_{10(B-1)k} \;\cup\; \bigcup_{k=0}^{A-2}0.1k[K_4]\;\cup\;\bigcup_{k=0}^{A-3}0.1k[K_5]\\\\
&&\cup\;\T_{1(A-2)(B-2)0}\;\cup\;\bigcup_{k=0}^{A-1}\T_{1(A-2)(B-1)k}
\end{array}
$$
and 
$$\begin{array}{rcl}
\alpha_{B-3}'&\subset&\bigcup_{k=0}^{A-1}\T_{2(B-1)0k}\;\cup\;\bigcup_{k=B-A+1}^{B-1}0.2k[K_1]\;\cup\;\bigcup_{k=B-A+2}^{B-1}0.2k[K_2]\\\\
&&\cup\;\T_{2(B-A+1)1(B-1)}\;\cup\;\bigcup_{k=B-A}^{B-1}\T_{2(B-A+1)0k}.
\end{array}
$$
We consider the sets $\T_{a_1'a_2'a_3'a_4'}$  and $\T_{a_1a_2a_3a_4}$ showing up in the right side of the above inclusions, as well as sets of this form with digits $a_1',\ldots,a_4'$ and $a_1,\ldots,a_4$ corresponding to the sets $0.1k[K]$ and $0.2k[K]$. If two such sets have nonempty intersection, they must satisfy Lemma~\ref{lem:subd}. In particular, since $a_1-a_1'=2-1=1$, we must have $a_2-a_2'\in\{A,A-1,A-2\}$. 

If $a_2-a_2'=A$, then $a_3=B-1$ by Lemma~\ref{lem:subd}, but this does not occur, since $a_3$ is at most $A-1$ in the above sets. 

If $a_2-a_2'=A-1$, then $a_3-a_3'\in\{B-A,B-A+1,B-A+2\}$. We consider the different cases. 
\begin{itemize}
\item $a_3-a_3'=B-A$ ($\geq 2$). If  $a_3\in\{0,1\}$ or if $a_3'\in\{B-2,B-1\}$, then $a_3-a_3'\leq 1$. Hence it remains to check that
$$\bigcup_{k=0}^{A-3}0.1k[K_5]\;\cap\;\bigcup_{k=B-A+2}^{B-1}0.2k[K_2]=\emptyset. 
$$ 
Here, $a_3\leq A-1$ and $a_3'\geq B-A$. Thus, if $a_3\leq A-2$ or if $a_3'\geq B-A+1$, then $a_3-a_3'\leq 1$. Hence we are left to check that 
$$0.1a_2'(B-A)K_1\;\cap\;0.2a_2(A-1)K_4=\emptyset.
$$
However, by Lemma~\ref{lem:subd}, for this intersection to be nonempty, we should have $a_4=0$, which does not hold. 
\item $a_3-a_3'\in\{B-A+1,B-A+2\}$. Note that $B-A+1\geq 3$ and $B-A+1\geq 4$. However, since $a_3\leq A-1$ and $a_3'\geq B-A$, we have $a_3-a_3'\leq 2$. Therefore, these cases do not occur.
\end{itemize}
Finally, if $a_2-a_2'=A-2$ then, by Lemma~\ref{lem:subd}, $a_3-a_3'=-1$ and the remaining digit sequences $(a_n)_{n\geq 4}$, $(a_n')_{n\geq 4}$ are the periodic sequences $(0,B-1,0,\ldots)$, $(B-1,0,B-1,\ldots)$. In particular, the condition $a_4=0$ is only satisfied by the set $\T_{2(B-1)00}$ and the condition $a_4'=B-1$ only satisfied by the set $\T_{10(B-1)(B-1)}$. But then $a_2-a_2'=B-1\ne A-2$. 


\bibliography{biblio}

\def\cprime{$'$}
\begin{thebibliography}{10}

\bibitem{AkiyamaDorferThuswaldnerWinkler09}
{\sc S.~Akiyama, G.~Dorfer, J.~M. Thuswaldner, and R.~Winkler}, {\em On the
  fundamental group of the {S}ierpi\'nski-gasket}, Topology Appl., 156 (2009),
  pp.~1655--1672.

\bibitem{AkiyamaLoridant10}
{\sc S.~Akiyama and B.~Loridant}, {\em Boundary parametrization of planar
  self-affine tiles with collinear digit set}, Sci. China Math., 53 (2010),
  pp.~2173--2194.

\bibitem{AkiyamaLoridant11}
\leavevmode\vrule height 2pt depth -1.6pt width 23pt, {\em Boundary
  parametrization of self-affine tiles}, J. Math. Soc. Japan, 63 (2011),
  pp.~525--579.

\bibitem{AkiyamaThuswaldner05}
{\sc S.~Akiyama and J.~M. Thuswaldner}, {\em The topological structure of
  fractal tilings generated by quadratic number systems}, Comput. Math. Appl.,
  49 (2005), pp.~1439--1485.

\bibitem{Alster10}
{\sc E.~Alster}, {\em The finite number of interior component shapes of the
  {L}evy dragon}, Discrete Comput. Geom., 43 (2010), pp.~855--875.

\bibitem{BandtWang01}
{\sc C.~Bandt and Y.~Wang}, {\em Disk-like self-affine tiles in
  $\mathbb{R}^2$}, Discrete Comput. Geom., 26 (2001), pp.~591--601.

\bibitem{BernatLoridantThuswaldner10}
{\sc J.~Bernat, B.~Loridant, and J.~Thuswaldner}, {\em Interior components of a
  tile associated to a quadratic canonical number system---{P}art {II}},
  Fractals, 18 (2010), pp.~385--397.

\bibitem{DorferThuswaldnerWinkler13}
{\sc G.~Dorfer, J.~M. Thuswaldner, and R.~Winkler}, {\em Fundamental groups of
  one-dimensional spaces}, Fund. Math., 223 (2013), pp.~137--169.

\bibitem{DumontThomas89}
{\sc J.-M. Dumont and A.~Thomas}, {\em Systemes de numeration et fonctions
  fractales relatifs aux substitutions}, Theoret. Comput. Sci., 65 (1989),
  pp.~153--169.

\bibitem{GroechenigHaas94}
{\sc K.~Gr{\"o}chenig and A.~Haas}, {\em Self-similar lattice tilings}, J.
  Fourier Anal. Appl., 1 (1994), pp.~131--170.

\bibitem{Hutchinson81}
{\sc J.~E. Hutchinson}, {\em Fractals and self-similarity}, Indiana Univ. Math.
  J., 30 (1981), pp.~713--747.

\bibitem{Katai95}
{\sc I.~K{\'a}tai}, {\em Number systems and fractal geometry}, University of
  P\'ecs,  (1995).

\bibitem{Kenyon92}
{\sc R.~Kenyon}, {\em Self-replicating tilings}, in Symbolic dynamics and its
  applications ({N}ew {H}aven, {CT}, 1991), vol.~135 of Contemp. Math., Amer.
  Math. Soc., Providence, RI, 1992, pp.~239--263.

\bibitem{KiratLau00}
{\sc I.~Kirat and K.-S. Lau}, {\em On the connectedness of self-affine tiles},
  J. London Math. Soc. (2), 62 (2000), pp.~291--304.

\bibitem{Kuratowski68}
{\sc K.~Kuratowski}, {\em Topology. {V}ol. {II}}, New edition, revised and
  augmented. Translated from the French by A. Kirkor, Academic Press, New York,
  1968.

\bibitem{LagariasWang96a}
{\sc J.~C. Lagarias and Y.~Wang}, {\em Self-affine tiles in {${\bf R}^n$}},
  Adv. Math., 121 (1996), pp.~21--49.

\bibitem{LagariasWang97}
\leavevmode\vrule height 2pt depth -1.6pt width 23pt, {\em Integral self-affine
  tiles in {${\bf R}^n$}. {II}. {L}attice tilings}, J. Fourier Anal. Appl., 3
  (1997), pp.~83--102.

\bibitem{LauLeung07}
{\sc K.-S. Leung and K.-S. Lau}, {\em Disklikeness of planar self-affine
  tiles}, Trans. Amer. Math. Soc., 359 (2007), pp.~3337--3355.

\bibitem{Loridant0000}
{\sc B.~Loridant}, {\em Self-affine tiles with collinear digit set: topological
  properties of a subclass}, in preparation.

\bibitem{LoridantLuoSellamiThuswaldner16}
{\sc B.~Loridant, J.~Luo, T.~Sellami, and J.~Thuswaldner}, {\em On cut sets of
  attractors of iterated function systems}, Proc. Amer. Math. Soc., 144 (2016),
  pp.~4341--4356.

\bibitem{LuoRaoTan02}
{\sc J.~Luo, H.~Rao, and B.~Tan}, {\em Topological structure of self-similar
  sets}, Fractals, 10 (2002), pp.~223--227.

\bibitem{LuoThuswaldner06}
{\sc J.~Luo and J.~M. Thuswaldner}, {\em On the fundamental group of
  self-affine plane tiles}, Ann. Inst. Fourier (Grenoble), 56 (2006),
  pp.~2493--2524.
\newblock Num{\'e}ration, pavages, substitutions.

\bibitem{MauldinWilliams88}
{\sc R.~D. Mauldin and S.~C. Williams}, {\em Hausdorff dimension in graph
  directed constructions}, Trans. Amer. Math. Soc., 309 (1988), pp.~811--829.

\bibitem{NgaiNguyen03}
{\sc S.-M. Ngai and N.~Nguyen}, {\em The {H}eighway dragon revisited}, Discrete
  Comput. Geom., 29 (2003), pp.~603--623.

\bibitem{NgaiTang04}
{\sc S.-M. Ngai and T.-M. Tang}, {\em A technique in the topology of connected
  self-similar tiles}, Fractals, 12 (2004), pp.~389--403.

\bibitem{NgaiTang05}
\leavevmode\vrule height 2pt depth -1.6pt width 23pt, {\em Topology of
  connected self-similar tiles in the plane with disconnected interiors},
  Topology Appl., 150 (2005), pp.~139--155.

\bibitem{Praggastis92}
{\sc B.~L. Praggastis}, {\em Markov partitions for hyperbolic toral
  automorphisms}, ProQuest LLC, Ann Arbor, MI, 1992.
\newblock Thesis (Ph.D.)--University of Washington.

\bibitem{Rauzy82}
{\sc G.~Rauzy}, {\em Nombres alg\'ebriques et substitutions}, Bull. Soc. Math.
  France, 110 (1982), pp.~147--178.

\bibitem{Solomyak97}
{\sc B.~Solomyak}, {\em Dynamics of self-similar tilings}, Ergodic Theory
  Dynam. Systems, 17 (1997), pp.~695--738.

\bibitem{Thurston89}
{\sc W.~Thurston}, {\em Groups, tilings, and finite state automata}.
\newblock AMS Colloquium lecture notes, 1989.

\bibitem{Whyburn79}
{\sc G.~Whyburn and E.~Duda}, {\em Dynamic topology}, Springer-Verlag, New
  York, 1979.
\newblock Undergraduate Texts in Mathematics, With a foreword by John L.
  Kelley.

\end{thebibliography}
\bibliographystyle{siam}

\end{document}